\documentclass[10pt]{amsart}
\usepackage[all]{xy}
\usepackage{amsmath,amscd,amsfonts,amsthm,amssymb,latexsym,mathrsfs}
\usepackage{graphicx,epsfig}
\usepackage{url,hyperref}
\usepackage[usenames,dvipsnames]{color}

\parskip=0.5ex plus0.16667ex minus0.08333ex

\DeclareMathOperator{\im}{{\rm im}}

\DeclareMathOperator{\cork}{{\rm cork}}
\DeclareMathOperator{\Frac}{{\rm Frac}}


\newcommand{\val}{{\rm val}}

\DeclareMathOperator{\conv}{{\rm conv}}
\DeclareMathOperator{\cone}{{\rm cone}}

\DeclareMathOperator{\length}{{\rm length}}
\DeclareMathOperator{\rec}{{\rm rec}}

\DeclareMathOperator{\Ann}{{\rm Ann}}
\DeclareMathOperator{\maxSpec}{\operatorname{maxSpec}}
\newcommand{\Mat}[1]{\text{\rm $#1$-Mat}}

\newcommand{\Gr}{Gr}
\newcommand{\Fl}{Fl}

\newcommand{\K}{\mathbb K}
\newcommand{\N}{\mathbb N}
\newcommand{\Z}{\mathbb Z}

\newcommand{\R}{\mathbb R}

\renewcommand{\P}{\mathbb P}

\newcommand{\m}{\mathfrak m}
\newcommand{\Trop}{\operatorname{Trop}}


\setlength{\marginparwidth}{1.2in}
\let\oldmarginpar\marginpar
\renewcommand\marginpar[1]{\-\oldmarginpar[\footnotesize #1]{\raggedright\footnotesize #1}}

\newtheorem{theorem}{Theorem}[section]
\newtheorem{lemma}[theorem]{Lemma}
\newtheorem{proposition}[theorem]{Proposition}
\newtheorem{corollary}[theorem]{Corollary}

\theoremstyle{definition}
\newtheorem{definition}[theorem]{Definition}
\newtheorem{Example}[theorem]{Example}
\newenvironment{example}[1][]{\begin{Example}[#1]\pushQED{\qed}}{\popQED\end{Example}}

\theoremstyle{remark}
\newtheorem{Remark}[theorem]{Remark}
\newenvironment{remark}[1][]{\begin{Remark}[#1]\pushQED{\qed}}{\popQED\end{Remark}}
\newtheorem{Question}[theorem]{Question}
\newenvironment{question}[1][]{\begin{Question}[#1]\pushQED{\qed}}{\popQED\end{Question}}

\numberwithin{equation}{section}

\begin{document}
\title[Polyhedra and parameter spaces for matroids over valuation rings]{Polyhedra and parameter spaces\\for matroids over valuation rings}
\author{Alex Fink \and Luca Moci}
\thanks{The first author was supported by EPSRC grant EP/M01245X/1.}

\maketitle

\begin{abstract}
In this paper we address two of the major foundational questions in the theory of matroids over rings. 
First, we provide a cryptomorphic axiomatisation, by introducing an analogue of the base polytope for matroids. Second, we describe a parameter space for matroids over a valuation ring, which turns out to be 
a tropical version of the Bott-Samelson varieties for the full flag variety. Thus a matroid over a valuation ring is a sequence of flags of tropical linear spaces a.k.a.\ valuated matroids.
\end{abstract}

\section{Introduction}

\emph{Matroids over rings} generalise several combinatorial constructions, such as matroids, \emph{valuated matroids} \cite{DW} and \emph{(quasi-)arithmetic matroids} \cite{Branden-Moci, D'Adderio-Moci}.
Since their recent introduction in \cite{FM}, they proved to be a powerful language with a wide number of applications, ranging from algebraic statistics \cite{KKL} to combinatorial topology \cite{BBGM, DeMo} and discrete geometry \cite{DeRi}.
Just as the notion of a matroid axiomatises the linear dependencies of a list of vectors, the notion of a matroid
over a ring $R$ axiomatises the relations of a list of elements in a $R$-module.
When $R$ is a Dedekind ring, several constructions which are crucial in matroid theory can be extended to matroids over $R$, such as duality and a universal deletion-contraction invariant \cite{FM}.

However, a significant lacuna was the lack of \emph{cryptomorphisms} (i.e.\ equivalent axiomatisations) for matroids over $R$.
Here we provide one of these, the analogue of the \emph{base polytope}, in Section~\ref{sec:P}.
Introduced in~\cite{Edmonds,GGMS}, the base polytope of a classical matroid is the convex hull of the indicator functions of the bases. Our analogue is not bounded (in fact it is a polyhedral cone) and lives in a higher dimensional ambient space, but still shares many of the theory's nice features,
e.g.\ it is well behaved under matroid operations (Proposition~\ref{prop:P ops}).
Moreover, polytopes arising from a matroid over $R$ can be fully characterised, mostly in term of their edge directions
(Theorem~\ref{thm:polyhedron}), which provides our cryptomorphism.
Martino~\cite{Martino1,Martino2} has provided further cryptomorphisms
when the base ring is~$\mathbb Z$,
filling the role of an \emph{independent set complex}.

The approach developed in \cite{FM} was based on \emph{localising} a matroid over a Dedekind ring $R$ at each prime ideal of $R$, hence obtaining a suite of matroids over discrete valuation rings.
We continue this study in the present paper,
though we broaden to the case of \emph{valuation rings}, not necessarily discrete.
This includes the case of Puiseux series, of interest in tropical geometry.
The global conditions on a matroid over a Dedekind domain are not stringent once its localisations are known,
so our focus does not discard a great deal of structure and we can define a polyhedron in this case as well.
However, these polyhedra can no longer be recognised by the directions of their edges, as Example~\ref{ex:two primes} shows.
In Sections \ref{sec2} and~\ref{sec3} we characterise matroids over a valuation ring,
in terms of a family of inequalities that can all be obtained from the tropicalisation of one Pl\"ucker relation for the Grassmanian, by two logical operations that we call \emph{genericisation} and \emph{zeroisation}.
The description is particularly simple and rich for a wide class of matroids that we call \emph{spannable} (see Section~\ref{sec:spannable}).

Furthermore we answer, in Sections~\ref{sec:TLS} and \ref{sec:parameter},
the questions of~\cite{FM} regarding what data inheres in a matroid over a valuation ring,
beyond the flag of \emph{tropical linear spaces} (a.k.a. {valuated matroids}) noted there.
Our answer, Theorem~\ref{thm:BS},
is phrased in the language of a \emph{parameter space} for matroids over valuation rings.
This parameter space, which is the intersection of the \emph{Dressian} with a linear space (Proposition \ref{prop:slice}),
is closely related to a tropical analogue of type~A \emph{Bott-Samelson varieties}.
These arise in algebraic geometry as resolutions of singularities of Schubert varieties
that behave very tractably on account of their iterated bundle structure and amenability to toric methods;
beyond the study of the Schubert varieties themselves, e.g.~\cite{BK05},
they find applications to topics such as 
representation theory of reductive algebraic groups \cite{Demazure},
Schubert and related polynomials \cite{Magyar}, 
associahedra and more generally brick polytopes \cite{Escobar},
and rhombus tilings \cite{rhombus}.
For our connection to be exact there are technical adjustments to be made:
intersecting with a cone, reinserting data lost by projectivisation,
and taking the direct limit over a system of inclusions.
Overlooking these adjustments, a matroid over a valuation ring is a \emph{sequence} of flags
of valuated matroids, starting with the flag observed in~\cite{FM}
and changing one step at a time to arrive at the standard flag
where all valuations are trivial.

\subsection{Comparison of matroid generalisations}
Matroids over rings differ in an apparent way from the ``orthodox'' picture of generalised matroids
which has flourished in the last years, represented by
matroids over hyperfields~\cite{Baker16},
over fuzzy rings~\cite{Dress1} (these last two having been unified in~\cite{GJ}),
and over partial fields~\cite{SW}.
All of these objects set about generalising the coefficients in a realisation:
so when the base is made to be a field $\mathbb K$, what comes out are matroids realised over~$\mathbb K$,
and to capture simply the set of all classical matroids, one needs a new base ``simpler'' than any field.
By contrast, we generalise the values taken by the rank function from natural numbers to modules:
matroids over any field in our sense are classical matroids,
and the choice of the field doesn't matter (any field having equally trivial module theory).

The present work begins to bridge the gap between these two worlds of generalisations,
by casting the entirety of the structure of matroids over a valuation ring $R$ in terms of valuated matroids.
Valuated matroids belong to the world of generalised coefficients:
they are matroids over the tropical hyperfield, or over the valuated fuzzy ring.
This bridge can be lifted to matroids over a Dedekind domain,
by constructing a suitable product of tropical hyperfields, one for each prime.

This is not enough to allow matroids over~$R$
to be directly interpreted as matroids over some hyperfield (vel sim.),
because their structure is richer: in fact the data in a matroid over a hyperfield belongs only to subsets of the ground set of fixed size,
while that in a matroid over~$R$ belongs to all subsets.
However, Corollary~\ref{cor:span cospan} shows that \emph{bispannable} matroids over~$R$,
those which are spannable and have spannable duals---%
the situation corresponding to a configuration of elements \emph{generating} a \emph{free} $R$-module---%
are determined by the modules on sets of a single size.
Thus bispannable matroids over~$R$ may literally be matroids over a suitable hyperring (Question~\ref{q:hyperfield}).

We should not omit to discuss arithmetic matroids here.
Like matroids over~$\mathbb Z$, arithmetic matroids are meant to encode dependences among a list of elements in a finitely generated abelian group.
The motivation for their introduction was the theory of arrangements of codimension~1 subtori in a torus,
in particular describing the Poincar\'e polynomials of complements of such arrangements.
Thus arithmetic matroids have an axiom with no counterpart in matroids over~$\mathbb Z$
ensuring, roughly, that the numbers counting some set of \emph{layers} of the arrangement come out nonnegative.
On the other hand, arithmetic matroids do not care about the tropical linear space structure inherent in matroids over $\Z$.
See Remark~\ref{rem:arithmetic} for more.

\subsection{Conventions}\label{ssec:conventions}
Zero is a natural number.
The minimum of an empty set is $\infty$.

If $G$ is an ordered cancellative abelian monoid,
then we take $\infty+\ell=\infty-\ell=\infty$
for all $\ell\in G\cup\{\infty\}$.
Perhaps the only surprise here is that $\infty-\infty = \infty$.
Setting this convention allows us to convert without fuss between
nondecreasing sequences of elements of $G\cup\{\infty\}$ and their sequences of partial sums,
which is an operation we use a lot when handling the quantities $t_i$ of Proposition~\ref{prop:t}.

Polyhedra are always convex but need not be bounded.  Polytopes are bounded polyhedra.
We write $\cone S$ for the cone generated by a set of vectors $S$.
If $P$ is a polyhedron we write
$\rec P=\{v : P+v \subseteq P\}$ for its \emph{recession cone},
and $\tan_v P=\bigcup_{\lambda\geq0}\lambda(P-v)$ for its \emph{tangent cone} at the point~$v$,
which is a cone over the origin.

We augment subsets of a ground set by one or a few elements throughout this paper,
so we will use streamlined notation for this operation.
Given a subset $A$ of a set $E$ and elements
$b,c,\ldots\in E\setminus A$, we will write $Ab$ for $A\cup \{b\}$, $Abc$ for $A\cup \{b,c\}$, and so on.

\subsection*{Acknowledgments}
The authors thank Michael Joswig and Benjamin Schr\"oter for useful discussions,
and the latter for a close reading.

\tableofcontents

\section{Modules over valuation rings and their invariants}\label{sec2}

We first recall the definition of matroids over rings, quoted from~\cite{FM}.

\begin{definition}\label{def:matroid}
Let $R$ be a commutative ring.
A \emph{matroid over $R$} on the ground set $E$
is a function $M$ assigning to each subset $A\subseteq E$
a finitely generated $R$-module $M(A)$ such that
\begin{itemize}
\item[(M)] for every subset $A\subseteq E$ and elements
$b,c\in E$, there exist elements
$x=x(b,c)$ and $y=y(b,c)$ of $M(A)$ satisfying
\begin{align*}
M(A\cup\{b\}) &\cong M(A)/(x) \\
M(A\cup\{c\}) &\cong M(A)/(y) \\
M(A\cup\{b,c\}) &\cong M(A)/(x,y).
\end{align*}
\end{itemize}
\end{definition}

The principal, but not the only, class of examples is the \emph{realisable} matroids over~$R$,
in which $M(\emptyset)$ is an arbitrary (finitely generated) $R$-module,
$x_e$ is an element of $M(\emptyset)$ for each $e\in E$,
and $M(A) = M(\emptyset) / \langle x_e : e\in A\rangle$.

In fact, all our matroids over~$R$ will be \emph{finitely presented},
i.e.\ they will be such that every module $M(A)$ is finitely presented.
When $R$ is Noetherian, as it was for all the substantial results of~\cite{FM},
being finitely presented is equivalent to being finitely generated.%

Throughout this paper, with the single exception of Proposition~\ref{prop:Dedekind poly} and the ensuing discussion,
$R$ will be a fixed valuation ring with maximal ideal $\m$
and valuation $\val$.  We straightforwardly write $\val(R)$ for the range of the valuation,
an ordered cancellative abelian monoid containing the absorbing element $\infty=\val(0)$ and the identity $0=\val(1)$.
We write $\val(R)^+$ to mean $\val(R)\setminus\{0\}$.
From Section~\ref{sec:P} onward
we further assume that $R$ is of height one, so that its valuation can be taken to be
$\val:R\to\R\cup\{\infty\}$. 
This assumption is taken for simplicity, but is probably inessential.
Tropical geometry for fields with more general valuations is particularly underexplored:
\cite{Aroca} is a first investigation.

We will not need assumptions on~$R$ beyond these to study the structure of matroids over~$R$.
It is possible to work with the definition if one knows only
the classification of finitely presented $R$-modules up to isomorphism
and the knowledge of which isomorphism types admit surjections with cyclic kernel
or pushout squares of such surjections.
The structure theory of modules is rigid enough
that the answers to these questions depend only on $\val(R)$, as we will see.

Discrete valuation rings, for which
$\val(R)=\N\cup\{\infty\}$, are examples of valuation rings of height one.
Another example is the ring of integers of the \emph{Puiseux series} over a field $\K$,
namely the ring of formal sums of the form
\[x = \sum_{q\in Q} a_qt^q\]
in an indeterminate $t$ where $a_q\in K\setminus\{0\}$ and
$Q$ is a well-ordered set of positive rational numbers,
under the valuation taking this element $x$ to~$\min Q$.

Finitely generated ideals of~$R$ are principal, and therefore all
finitely presented quotients of~$R$ are of the form $R/(x)$ for $x\in R$.
Every finitely presented $R$-module $N$ is a direct sum of these quotients
\cite[Theorem 1]{Warfield}.
The Grothendieck group of finitely-presented $R$-modules containing no submodule isomorphic to~$R$
is isomorphic to the value group of~$R$, under the isomorphism
sending $R/(x)$ to $\val(x)$ for $x\in R$.
We define the \emph{length} of an $R$-module to be $\infty$ if it contains a
submodule isomorphic to~$R$, and its image in the value group of~$R$ above otherwise.
For instance, if $R$ is a discrete valuation ring with $\val(R)=\mathbb N$
and maximal ideal $\mathfrak m$, then $\length(N) = \dim_{R/\m}N$
for any $R$-module $N$.

Given a finitely presented $R$-module $N$, define a family of invariants
\[t_i(N) = \min_{x_1,\ldots,x_i\in N}\length(N/\langle x_1,\ldots,x_i\rangle)\]
indexed by the natural numbers $i\geq0$.
The collection of all the $t_i(N)$ is a complete isomorphism invariant
of the module $N$, because if
\[N \cong R/I_1\oplus\cdots\oplus R/I_s\]
with $0\subseteq I_1\subseteq\cdots\subseteq I_s\subsetneq R$ principal ideals,
then taking $x_1,\ldots,x_i$ to be generators of the first $i$ of these
summands achieves the minimum length of~$N/\langle x_1,\ldots,x_i\rangle$.
Therefore, by our conventions about differences involving $\infty$, we have
$\length(R/I_i) = t_{i-1}(N) - t_i(N)$,
implying that
\[I_i = \{r\in R : \val(r)\geq t_{i-1}(N) - t_i(N)\}.\]

In just the same way, a matroid $M$ over~$R$ on ground set~$E$ is uniquely determined by
the collection of all the $t_i(A)$ for $i\geq0$ and $A\subseteq E$.
When there is a privileged matroid $M$ over~$R$ in the context,
we will abbreviate $t_i(A)\doteq t_i(M(A))$.
Our first objective is to reframe
the characterisation of these matroids from Section~5 of~\cite{FM}
in terms of these quantities.

\begin{proposition}\label{prop:t}
Let $t_i(A)$ be an element of $\val(R)\cup\{\infty\}$ for each subset $A\subseteq E$
and each natural $i\geq0$.  The $t_i(A)$ are the data of a matroid $M$ over~$R$
if and only if the following hold:
\begin{enumerate}
\item[(TS)] for all $A\subseteq E$, the sequence $(t_i(A))_{i\in\N}$ stabilises at zero;
\item[(T0)] for all $A\subseteq E$ and $i\geq0$,
\[t_i(A)-t_{i+1}(A)\geq t_{i+1}(A)-t_{i+2}(A);\]
\item[(T1)] for all $A\subseteq E$, $b\in E\setminus A$, and $i\geq0$,
\[t_i(A)-t_{i+1}(A)\geq t_i(Ab)-t_{i+1}(Ab)\geq t_{i+1}(A)-t_{i+2}(A);\]
\item[(T2)] for all $A\subseteq E$, $b,c\in E\setminus A$, and $i\geq0$,
\[t_{i+1}(A)-t_{i+1}(Ab)-t_{i+1}(Ac)+t_i(Abc)\geq
\min\{t_i(Ab)-t_{i+1}(Ab), t_i(Ac)-t_{i+1}(Ac)\},\]
and equality is attained if $t_i(Ab)-t_{i+1}(Ab)\neq t_i(Ac)-t_{i+1}(Ac)$.
\end{enumerate}
\end{proposition}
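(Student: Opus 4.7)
The plan is to translate axiom (M) of Definition~\ref{def:matroid} into arithmetic conditions on the $t_i$, using the fact---established in the discussion preceding the statement---that the sequence $(t_i(N))_{i\geq 0}$ is a complete isomorphism invariant of a finitely presented $R$-module~$N$. Explicitly, Warfield's structure theorem writes $N\cong\bigoplus_j R/(f_j)$ with the ideals $(f_j)$ increasing in $j$, and one has $\val(f_j)=t_{j-1}(N)-t_j(N)$. A matroid over $R$ is therefore equivalent data to a coherent family $(t_i(A))_{A,i}$, and the task is to show that this coherence is captured exactly by (TS)--(T2).

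For the forward direction, (TS) is simply finite generation of $M(A)$ and (T0) records that the invariant factor valuations are non-increasing, both being immediate from Warfield. For (T1), apply axiom (M) to obtain $x\in M(A)$ with $M(Ab)\cong M(A)/(x)$ and invoke the classical interlacing of invariant factors under a cyclic quotient over a valuation ring: writing $v_j, w_j$ for the valuations of the $j$th invariant factors of $M(A)$ and $M(A)/(x)$ respectively, one has $v_j\geq w_j\geq v_{j+1}$, and taking $j=i+1$ and converting via $v_j=t_{j-1}-t_j$ produces (T1). For (T2), combine this with the pushout exact sequence
\[
0 \to M(A)/\bigl((x)\cap(y)\bigr) \to M(A)/(x)\oplus M(A)/(y) \to M(A)/(x,y) \to 0
\]
coming from the pushout square of Definition~\ref{def:matroid}, and track the $t_i$'s summand by summand in the invariant factor decomposition of $M(A)$: because ideals in $R$ are totally ordered, the component valuations of $x$ and $y$ give rise to the min on the right of (T2), with strict inequality of the min's two arguments forcing the equality clause in the familiar min-plus way.

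For the converse, define each $M(A)$ by its invariant factor decomposition (well-defined thanks to (TS) and (T0)) and verify axiom (M) for fixed $A, b, c$. Condition (T1) together with the converse of the interlacing theorem produces $x\in M(A)$ with $M(A)/(x)\cong M(Ab)$, and similarly a $y$ for $M(Ac)$, constructed component by component in an invariant factor basis. The main obstacle is the joint compatibility: one must select $x$ and $y$ so that simultaneously $M(A)/(x,y)\cong M(Abc)$, and this is where (T2) does the work. The plan is to parametrise admissible pairs $(x,y)$ and reduce them to a normal form under the automorphism group of $M(A)$, after which the invariant factors of $M(A)/(x,y)$ become an explicit min-plus function of the component valuations of $x$ and $y$; (T2)---including the equality clause when the two differences of (T1) are distinct---is precisely the condition that makes this function output $t_i(Abc)$ as required.
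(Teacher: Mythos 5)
Your overall framing is sound: Warfield's structure theorem makes the $t_i(N)$ a complete invariant, so a matroid over $R$ is equivalent data to a coherent family $(t_i(A))$, and the task is to verify that coherence is equivalent to (TS)--(T2). Your forward direction is also largely in the right spirit, and it parallels the paper's, which (via Lemma~\ref{lem:d}) derives the analogous inequality in the Grothendieck group from an exact-square diagram equivalent to your Mayer--Vietoris sequence $0 \to M(A)/(Rx\cap Ry) \to M(A)/(x)\oplus M(A)/(y) \to M(A)/(x,y)\to 0$, after reindexing through the auxiliary invariants $d_\ell$ and $d_{\leq\ell}$. But the equality clause in (T2) is not automatic ``in the familiar min-plus way.'' Deducing it requires showing that if the two differences $t_i(Ab)-t_{i+1}(Ab)$ and $t_i(Ac)-t_{i+1}(Ac)$ differ, the corresponding Grothendieck-class contribution of $Rx\cap Ry$ vanishes; the paper does this by producing a relation $rx=sy\neq 0$, choosing an ideal $J$ annihilating the nonfree part of $M(A)$, and comparing $JM'(Ab)$ with $JM'(Ac)$. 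That argument has content and cannot be waved at.

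The converse direction contains a genuine gap. You say: ``parametrise admissible pairs $(x,y)$ and reduce them to a normal form under the automorphism group of $M(A)$, after which the invariant factors of $M(A)/(x,y)$ become an explicit min-plus function.'' This is the outline of the entire problem, not a proof. Constructing $x$ and $y$ so that \emph{all three} quotients $M(A)/(x)$, $M(A)/(y)$, $M(A)/(x,y)$ have the prescribed invariants is the hard part of the proposition, and the interaction between the two supports in an invariant-factor basis is delicate. The paper's proof does not appeal to any automorphism normal form. It instead reformulates everything in terms of $d_\ell$ and $d_{\leq\ell}$ (Lemma~\ref{lem:d}), writes down explicit candidate elements
\[
x = \sum_{\ell\in I(A,b)} \tau(\ell-d_{\leq\ell}(A,b))\,e_\ell,
\qquad
y = \sum_{\ell\in I(A,c)} \tau(\ell-d_{\leq\ell}(A,c))\,e_\ell + \sum_{\ell} \tau(\cdots)\,\varepsilon_\ell,
\]
with carefully chosen elements $\tau(\ell)\in R$ replacing powers of a uniformiser, and then verifies by direct computation (imported from \cite{FM}) that these give the prescribed quotients. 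Moreover, once Lemma~\ref{lem:d} is in hand, a separate and non-trivial case analysis (occupying the second half of the appendix) is needed to show that the (L) axioms and the (T) axioms are equivalent, via the translation rule $t_i(A)-t_{i+1}(A)\geq\ell \Leftrightarrow d_\ell(A)>i$. Your proposal avoids the $d_\ell$ intermediary, which is a legitimate ambition, but it then has to do work at least as hard as the paper's explicit constructions, and currently it does not.
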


With the results of~\cite{FM} as starting point,
there are two parts to the proof of Proposition~\ref{prop:t}.
One is simply to replicate those results in the setting of a non-discrete
valuation ring.
For $N$ a finitely presented $R$-module, define the quantities
\begin{align*}
d_\ell(N) &= \min\{i\in\Z : t_i(N) - t_{i+1}(N) < \ell\}\in\Z, \\
d_{\leq\ell}(N) &= \mbox{the K-class of $N/I_\ell N$} \in\val(R).
\end{align*}
for each $\ell\in\val(R)^+$.
Observe that, as $\ell$ varies, the quantities $d_\ell(N)$ completely determine $N$
and therefore determine the quantities $d_{\leq\ell}(N)$.
We use this in the next lemma, silently treating the $d_{\leq\ell}(M(A))$
as functions of the $d_{\ell}(M(A))$.
\begin{lemma}\label{lem:d}
A collection of data $d_\ell(M(A))$ for each $A\subseteq E$ and $\ell\in\val(R)^+$
comes from a matroid over~$R$ if and only if the following hold:
\begin{enumerate}
\item[(L0)] for each $A\subseteq E$, the sequence $d_\ell(M(A))$ is bounded above and nonincreasing with~$\ell$;
\item[(L1)] for each $A\subseteq E$, $b\in E\setminus A$, and $\ell\in\val(R)^+$,
\[1 \geq d_\ell(M(A)) - d_\ell(M(Ab)) \geq 0;\]
\item[(L2a)]
for each $A\subseteq E$, $b,c\in E\setminus A$, and $\ell\in\val(R)^+$,
\[d_{\leq\ell}(M(A)) - d_{\leq\ell}(M(Ab)) - d_{\leq\ell}(M(Ac)) + d_{\leq\ell}(M(Abc)) \geq 0;\]
\item[(L2b)] equality holds in {\rm (L2a)} when $d_\ell(M(Ab))\neq d_\ell(M(Ac))$.
\end{enumerate}
\end{lemma}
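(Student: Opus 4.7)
The plan is to transfer the $d_\ell$-characterisation from~\cite{FM} (which is stated for DVRs) to the setting of an arbitrary height-one valuation ring, by verifying that every module-theoretic step remains valid in this generality and then reassembling them. Two key inputs from \cite{FM} both depend on $R$ only through its module theory: (i) the Warfield decomposition $M(A)\cong\bigoplus_j R/I_j$ of finitely presented modules, under which $d_\ell(M(A))$ counts the summands of length $\geq\ell$, so the data $(d_\ell(M(A)))_\ell$ records $M(A)$ up to isomorphism; and (ii) a standard lemma that quotienting a finitely presented $R$-module by a single element decreases each $d_\ell$ by $0$ or $1$. As remarked in the preamble, every isomorphism-type question about finitely presented modules over $R$ depends only on $\val(R)$, so (i) and (ii) carry over at once from the DVR case.

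For the forward direction, assume a matroid $M$ over $R$ is given. Condition (L0) is immediate from (i): finite generation provides the bound, and monotonicity in $\ell$ comes from the trivial observation that summands of length $\geq\ell'$ are also of length $\geq\ell$ when $\ell'\geq\ell$. For (L1), axiom (M) applied to the pair $A, b$ supplies an $x\in M(A)$ with $M(Ab)\cong M(A)/(x)$, to which (ii) applies. For (L2a)/(L2b), axiom (M) applied to the triple $A, b, c$ supplies $x, y\in M(A)$ realising the three pushout isomorphisms simultaneously; writing $\bar N := M(A)/I_\ell M(A)$ with images $\bar x, \bar y$, a quick inclusion-exclusion gives
\[
d_{\leq\ell}(M(A)) - d_{\leq\ell}(M(Ab)) - d_{\leq\ell}(M(Ac)) + d_{\leq\ell}(M(Abc)) \;=\; \length\bigl((\bar x)\cap(\bar y)\bigr) \;\geq\; 0,
\]
which is (L2a). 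For (L2b), the hypothesis $d_\ell(M(Ab))\neq d_\ell(M(Ac))$ translates via (ii) to an inequality of lengths between the cyclic submodules $(\bar x)$ and $(\bar y)$, and in a Warfield-decomposed module two cyclic submodules of unequal length can be arranged to be independent; thus $(\bar x)\cap(\bar y)$ has $K$-class zero, as required.

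Conversely, given data $(d_\ell(M(A)))$ satisfying (L0)--(L2b), (i) reconstructs each $M(A)$ up to isomorphism, and it remains to verify axiom (M). Condition (L1) produces candidate elements $x$ and $y$ for which $M(Ab)\cong M(A)/(x)$ and $M(Ac)\cong M(A)/(y)$ separately. The additional content of (L2a) and (L2b) is that these choices can be arranged to agree, so that $M(A)/(x,y)\cong M(Abc)$ as well. This is carried out by the explicit pushout construction from \cite{FM}: (L2b) dictates an independent-summand configuration in the unequal-length case, and (L2a) fixes the overlap length in the equal-length case.

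The main obstacle is this compatibility step, and more broadly the task of reconstructing the elements $x$ and $y$ from purely numerical data. In \cite{FM} it is handled by a case split at each $\ell$, depending on whether the drops $d_\ell(M(A))-d_\ell(M(Ab))$ and $d_\ell(M(A))-d_\ell(M(Ac))$ agree or disagree. The only genuinely new work in our setting is verifying that the supporting lemmas on surjections with cyclic kernel and on pushout squares of such surjections hold without the discreteness hypothesis; each such verification is routine given that, as the preamble emphasises, all relevant module-theoretic behaviour is controlled by $\val(R)$.
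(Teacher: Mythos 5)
Your overall strategy matches the paper's: treat Lemma~\ref{lem:d} as a transfer of the $d_\ell$-characterisation from~\cite{FM} to height-one valuation rings, exploiting that finitely presented module theory over~$R$ is controlled by $\val(R)$. The necessity of~(L0), (L1) and~(L2a) is handled the same way (your inclusion-exclusion identity is the paper's $3\times3$ exact grid, restated). But there are two genuine gaps. First, your argument for the necessity of~(L2b) does not work. The hypothesis $d_\ell(M(Ab))\neq d_\ell(M(Ac))$ does not translate to an inequality of lengths of $R\bar x$ and $R\bar y$: for instance, in $\bar N = R/\mathfrak m^2\oplus R/\mathfrak m$ the elements $(\pi,0)$ and $(\pi,1)$ both generate length-$1$ submodules, yet quotienting by the first drops $d_2$ and quotienting by the second does not. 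And even granting a correct translation, the phrase ``can be arranged to be independent'' is not an argument, since $\bar x$ and $\bar y$ are fixed by axiom~(M), not at your disposal; submodules of different lengths routinely intersect (e.g.\ $1$ and $\pi$ in $R/\mathfrak m^3$). What is needed, and what the paper supplies, is a direct argument from a nonzero element $r\bar x=s\bar y\in R\bar x\cap R\bar y$: pick a small nonzero ideal $J$ annihilating every non-free summand of $M'(A)$ and with $r\bar x\notin\mathfrak m J M'(A)$; then $JM'(A)\cap R\bar x = JM'(A)\cap R\bar y = JM'(A)\cap Rr\bar x$, which forces $JM'(Ab)\cong JM'(Ac)$ and hence $d_\ell(M(Ab))=d_\ell(M(Ac))$.

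Second, for sufficiency you declare the remaining verifications ``routine'', but the single step where the generalisation beyond DVRs is actually nontrivial is not verification of lemmas about cyclic-kernel surjections or pushouts (those do carry over unchanged); it is the explicit construction of the elements $x$ and~$y$. The formulae in~\cite{FM} use powers of a uniformiser, which does not exist in a general height-one valuation ring. The paper has to replace these with a finite system of elements $\tau(\ell)\in R$, one for each $\ell$ in a finite set $Z\subseteq\val(R)$ of valuations arising in the formulae, such that $\val(\tau(\ell))=\ell$ and $\tau(\ell)\tau(m)=\tau(\ell+m)$ whenever $\ell,m,\ell+m\in Z$. This is done by choosing a $\mathbb Q$-basis of the rational span of $Z$ in which every element of~$Z$ has integer coordinates and setting the $\tau(\ell)$ to be Laurent monomials in elements with those basis valuations. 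Without some such device, the construction of $x,y$ as in~\cite{FM} cannot even be written down, so your sketch of the converse direction asserts the conclusion rather than proving it.
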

In order to get on to the good stuff, we have placed
the proofs of Lemma~\ref{lem:d} and Proposition~\ref{prop:t}
in Appendix~\ref{app:d}.

\section{Three-term Pl\"ucker relations}\label{sec:D4}\label{sec3}
This section is dedicated to explaining
how the axioms for matroids over valuation rings
can be profitably understood in a unified fashion.
The unifying perspective works by imagining two kinds of extension of the
ground set, namely adding loops and adding ``generic'' elements
extending the matroid in the freest way.
Any sentence which is true of subsets
of ground set elements remain true when some of these
are specialised to generic elements or loops.
In particular, starting with a single axiom and making such specialisations
produces an entire system of axioms.
(Observe however that these specialisations are not \emph{implications}
unless one already knows that adding loops and making free extensions
produce matroids over~$R$, and to prove this requires the axioms.
So the axiom system cannot be reduced to only one member.)

Notionally, a ``generic'' element $g$ in a matroid $M$ over~$R$ should be an element
such that, for any subset $A$ of its ground set, $M(Ag)$ is the quotient
of $M(A)$ by a cyclic submodule in the generic way.
The next lemma shows that
this genericity is sensible from the perspective of commutative algebra.

\begin{lemma}\label{lem:nongeneric submodule}
Let $N$ be a finitely presented $R$-module, and let $C$ be the largest direct summand of~$N$.
The set of $n\in N$ which generate a cyclic submodule
which is not a summand of~$N$ isomorphic to~$C$ is a proper submodule $N_{\rm gen}$ of~$N$, namely
\[\m N + \{n\in N : rn=0\mbox{ for some $r\not\in\Ann(N)$}\}.\]
\end{lemma}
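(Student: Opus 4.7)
My plan is to fix a decomposition $N \cong \bigoplus_{j=1}^s R/I_j$ with principal ideals $0 \subseteq I_1 \subseteq \cdots \subseteq I_s \subsetneq R$, so $C = R/I_1$ and $\Ann(N) = I_1$, and identify $N_{\rm gen}$ explicitly in these coordinates.

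First I check that $\m N$ and $U_2 := \{n \in N : rn = 0 \text{ for some } r \notin \Ann(N)\}$ are submodules. Only $U_2$ requires work: scalar closure is obvious, and for additive closure, given $rn = r'n' = 0$ with $r, r' \notin I_1$, WLOG $\val(r) \leq \val(r')$, so $r'$ is a multiple of $r$; whence $r'n = 0$ automatically, $r'(n+n') = 0$, and $r' \notin I_1$. Hence $N_{\rm gen} = \m N + U_2$ is a submodule.

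The key characterization is: $\langle n \rangle$ is a direct summand of $N$ isomorphic to $C$ if and only if some coordinate $n_j$ with $I_j = I_1$ is a unit in the local ring $R/I_j$. For the ``if'' direction, permuting summands with equal $I_j$ places such $j$ at position $1$; after rescaling to $n_1 = 1$, the candidate complement $K := 0 \oplus \bigoplus_{j \geq 2} R/I_j$ works via the splitting $(x_1, \ldots, x_s) = \tilde x_1 n + (0, x_2 - \tilde x_1 n_2, \ldots)$, well-defined because different lifts of $x_1 \in R/I_1$ differ by elements of $I_1 \subseteq I_j$. For the ``only if'' direction, a splitting yields $\phi : N \to R/I_1$ with $\phi(n) = 1$; the constraint $I_j \phi(e_j) = 0$ forces $\val(\phi(e_j)) \geq \val(I_1) - \val(I_j)$, which is strictly positive when $I_j \neq I_1$, so those terms of $\sum_j n_j \phi(e_j) = 1$ lie in the maximal ideal of $R/I_1$. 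Since non-units in a local ring form an ideal, the sum being a unit forces some term with $I_j = I_1$ to be a unit, hence $n_j$ itself to be a unit in $R/I_j$.

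Finally, a coordinate-wise computation shows
\[\m N + U_2 = \bigoplus_{j : I_j = I_1} \m/I_j \ \oplus\ \bigoplus_{j : I_j \neq I_1} R/I_j,\]
which by the characterization is precisely the complement of $V := \{n : \langle n \rangle \text{ is a summand of } N \text{ isomorphic to } C\}$. Properness follows since $(1, 0, \ldots, 0) \notin N_{\rm gen}$, its first coordinate being a unit. The main obstacle is the ``only if'' direction of the characterization, where the local-ring structure of $R/I_1$ and the valuation inequality on $\phi(e_j)$ are essential for excluding spurious contributions from coordinates with $I_j \neq I_1$; in the free case $I_1 = 0$ this is what rules out apparent counterexamples like $\langle (\pi, 1) \rangle \subset R \oplus R/\pi$, which is abstractly isomorphic to $C = R$ but fails to be a summand.
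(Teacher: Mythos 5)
Your proof is correct and takes a genuinely different, coordinatewise route from the paper's. The paper argues coordinate-freely: it counts cyclic summands via $\dim_{R/\m}N/\m N$ to handle $\m N$, compares annihilators to handle the torsion part, and for the converse asserts a one-line ``lift a splitting $N/\m N\to\langle n\rangle/\m\langle n\rangle$'' argument. You instead fix $N\cong\bigoplus_j R/I_j$, prove the clean coordinatewise criterion that $\langle n\rangle$ is a summand isomorphic to $C$ iff some $n_j$ with $I_j=I_1$ is a unit, and compute both sides of the asserted equality explicitly. What this buys you is precision exactly where the paper is terse: your example $\langle(\pi,1)\rangle\subset R\oplus R/\pi$ shows that $(\pi,1)$ lies in neither $\m N$ nor $\{n:rn=0\text{ for some }r\notin\Ann(N)\}$ separately yet does lie in their sum, so the paper's converse (``if $n$ is in neither of these submodules \ldots\ the inclusion splits'') is stated under a hypothesis strictly weaker than $n\notin N_{\rm gen}$, and the lifting it invokes in fact fails for this $n$---every $\phi\in\Hom(N,R)$ sends $(\pi,1)$ into $\m$. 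Your valuation bound on $\phi(e_j)$ for $I_j\neq I_1$ is precisely the extra control needed to prove the converse against the correct complement $\m N+U_2$ rather than the union $\m N\cup U_2$.
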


\begin{proof}
The number of cyclic summands of a module $N$ is $\dim_{R/\m} N/\m N$.
If $n\in\m N$ then this quantity fails to decrease in~$N/n$,
so $N/n$ is not the quotient of $N$ by a summand.
The annihilator of $N$ is the annihilator of its largest cyclic summand $C$;
so if $n$ is annihilated by a larger ideal of~$R$, it cannot generate
a copy of~$C$.  Conversely, if $n$ is in neither of these submodules, then
$\langle n\rangle\cong C\hookrightarrow N$, and the inclusion splits
by lifting a splitting $N/\m N\to \langle n\rangle/\m\langle n\rangle \cong R/\m$.
The submodule is proper because it clearly does not contain any generator
of a copy of~$C$.
\end{proof}

We define $N_{\rm gq}$ to be the complementary direct summand
to $C$ within~$N$.
Unless $N=0$, the submodule $N_{\rm gq}$ is not the whole of~$N$, since given
any choice of a direct summand $C$, its generator lies outside $N_{\rm gq}$;
in these cases $N_{\rm gq}$ is a proper $R/\m$-vector subspace of~$N$.
We may justifiably say that $N_{\rm gq}$ is the quotient of~$N$ by a \emph{generic} element;
the notation ``gq'' stands for ``generic quotient''.
In line with the fact that the data of matroids over rings contains
only isomorphism types of modules and no morphisms,
we will allow ourselves to say that any module isomorphic to~$N_{\rm gq}$ is a quotient by a generic element
of any module isomorphic to~$N$.

It is meaningful to ``add a generic element'' to a matroid over~$R$.

\begin{proposition}\label{prop:add-generic}
Let $M$ be a matroid over~$R$ on ground set $E$.  Define a collection of
$R$-modules $M_{\rm ge}$ indexed by subsets of $E\amalg\{g\}$ by
\[M_{\rm ge}(A) = \begin{cases}
M(A) & g\not\in A \\
M(A)_{\rm gq} & g\in A
\end{cases}\]
Then $M_{\rm ge}$ is a matroid over~$R$.
\end{proposition}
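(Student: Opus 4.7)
The plan is to verify the cryptomorphic $t$-characterisation of matroids over $R$ provided by Proposition~\ref{prop:t}, rather than check axiom (M) directly. I will express $t_i(M_{\rm ge}(A))$ in terms of the invariants $t_j(M(B))$ of the original matroid, then verify (TS), (T0), (T1), (T2) for the resulting sequences. An alternative, more direct, proof could proceed from Lemma~\ref{lem:nongeneric submodule}: pick a generic element $z\in M(A\setminus\{g\})$ whose image stays generic in each of $M((A\setminus\{g\})\cup S)$ for $S\subseteq\{b,c\}$, and let $x,y$ be the induced images of the elements furnished by (M) for $M$. This works cleanly, but the avoidance step (finding $z$ outside up to four proper submodules) is delicate when the residue field $R/\m$ is small, so the $t$-axiom route is more robust.

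By the structure theorem for finitely presented modules over a valuation ring, any such module $N$ decomposes as $\bigoplus_{j=1}^{s}R/I_j$ with $I_1\subseteq\cdots\subseteq I_s\subsetneq R$; its largest direct summand is $R/I_1$, so $N_{\rm gq}\cong\bigoplus_{j\geq 2}R/I_j$. Combined with the formula $t_i(N)=\sum_{j>i}\length(R/I_j)$ recorded just before Proposition~\ref{prop:t}, this yields the key identity $t_i(N_{\rm gq})=t_{i+1}(N)$. Writing $A^{\ast}:=A\setminus\{g\}$ and $\delta_A:=1$ if $g\in A$ and $0$ otherwise, the definition of $M_{\rm ge}$ gives
\[ t_i(M_{\rm ge}(A))=t_{i+\delta_A}(M(A^{\ast})). \]

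It remains to verify (TS)--(T2) for these quantities, by case analysis on where $g$ appears. When $g\notin A\cup\{b,c\}$, each axiom for $M_{\rm ge}$ is verbatim the corresponding axiom for $M$. When $g\in A$, the four subsets $A,Ab,Ac,Abc$ all contain $g$, every $t$-value shifts uniformly by $+1$, and each axiom for $M_{\rm ge}$ at index $i$ becomes the axiom for $M$ at $A^{\ast}$ with index $i+1$. The only delicate case is $g\in\{b,c\}$; say $g=b$ and $c\in E$. Then (T1) becomes $t_i(A)-t_{i+1}(A)\geq t_{i+1}(A)-t_{i+2}(A)\geq t_{i+1}(A)-t_{i+2}(A)$, which is (T0) for $M$ together with a trivial equality; and (T2) has LHS collapsing to $t_{i+1}(A)-t_{i+2}(A)$, while (T1) for $M$ at $A,c$ gives $t_i(Ac)-t_{i+1}(Ac)\geq t_{i+1}(A)-t_{i+2}(A)$, so the RHS minimum also equals $t_{i+1}(A)-t_{i+2}(A)$, making the inequality an equality and rendering the conditional clause automatic. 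Axiom (TS) is inherited since an index-shift of a stabilising sequence still stabilises. The main obstacle is therefore only careful bookkeeping; the conceptual content is concentrated in the identity $t_i(N_{\rm gq})=t_{i+1}(N)$.
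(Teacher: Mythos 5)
Your proof is correct, and it takes a genuinely different route from the paper. The paper verifies axiom~(M) directly: it invokes Lemma~\ref{lem:nongeneric submodule} to identify the nongeneric elements of a module as a proper submodule, reduces to the case of an infinite residue field $\mathbf k$ (legitimate because the structure theory of finitely presented $R$-modules depends only on $\val(R)$), and then uses the fact that a finite union of proper $\mathbf k$-subspaces is proper to find one $z\in M(A)$ generic simultaneously for $M(A)$, $M(A)/\langle x\rangle$, $M(A)/\langle y\rangle$, and $M(A)/\langle x,y\rangle$. You instead go through the cryptomorphism of Proposition~\ref{prop:t}: from the structure theorem you derive $t_i(N_{\rm gq})=t_{i+1}(N)$ (which the paper records, without proof, only \emph{after} this proposition), conclude $t_i(M_{\rm ge}(A))=t_{i+\delta_A}(M(A\setminus\{g\}))$, and check (TS)--(T2) by the three-case analysis on where $g$ sits. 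Your case $g=b$ is the only nontrivial one and you handle it correctly: the (T1) instance collapses to (T0), and the (T2) instance becomes an equality by the right inequality of (T1) for~$M$, so the conditional clause is vacuous.

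What each approach buys: yours avoids the residue-field enlargement entirely and needs no module-level avoidance argument, at the cost of leaning on the full strength of Proposition~\ref{prop:t} (whose sufficiency direction, proved in the appendix, does all the hard module-theoretic construction). The paper's argument is more self-contained at this point in the exposition and produces the actual element $z$ witnessing genericity, which is conceptually closer to the realisable picture. Your route also makes visible the bookkeeping identity that underlies the notion of ``genericisation'' introduced immediately afterwards in Section~\ref{sec:D4}, so it integrates well with the surrounding material. One small point of rigour you elided: the formula $t_i(N)=\sum_{j>i}\length(R/I_j)$ needs the paper's conventions about $\infty$ (Section~\ref{ssec:conventions}) when $N$ has free summands, but the identity $t_i(N_{\rm gq})=t_{i+1}(N)$ is unaffected, so no gap results.
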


We say that $M_{\rm ge}$ is a \emph{generic extension} of~$M$.
The opposite notion is easier: let $L$ be the matroid over~$R$ with a single
element $0$ that is (globally) a loop, i.e.\ $L(\emptyset) = L(\{0\}) = 0$.
Then $M\oplus L$ extends $M$ by adding a loop.

\begin{proof}
Axiom~(M) for $M$ provides, for any $A\subseteq E$ and any $b,c\not\in A$,
elements $x,y\in M(A)$ as in Definition~\ref{def:matroid}.
To check the axiom for $M_{\rm ge}$,
it is enough to show that there is a single element $z\in M(A)$
such that the quotient of any of the modules
$M(A)$, $M(A)/\langle x\rangle$, $M(A)/\langle y\rangle$, $M(A)/\langle x,y\rangle$
by the image of~$z$ is a generic quotient,
as then a suitable choice of $x$, $y$, and~$z$ within some quotient of~$M(A)$
gives any of the squares called for by axiom~(M).

As noted, we may assume $R$ is any valuation ring with its value group
without consequence for which matroids over~$R$ exist.
So suppose $R$ has infinite residue field $\mathbf k$.
By Lemma~\ref{lem:nongeneric submodule}, the choices of~$z$
which yield a nongeneric quotient form a proper $\mathbf k$-vector subspace of~$M(A)$.
We similarly obtain proper vector subspaces of
$M(A)/\langle x\rangle$, $M(A)/\langle y\rangle$, and~$M(A)/\langle x,y\rangle$
giving nongeneric quotients, which remain proper when lifted to~$M(A)$.
The union of finitely many proper subspaces of a vector space
over an infinite field is still a proper subset;
any element of its complement will serve for~$z$.
\end{proof}

Observe that, if $N$ is a finitely generated $R$-module,
then $t_i(N_{\rm gq}) = t_{i+1}(N)$ for each $i\geq0$.
As such, if $M$ is a matroid over~$R$ and $A$ any subset of its ground set,
the data that would comprise the quotient by a ``generic'' element
$M(Ag)$ is actually found in the data of~$M$,
namely as the sequence $t_{\ast+1}(A;M)$.
With this in mind, we give the following definitions.

\begin{definition}
\begin{itemize}
\item The \emph{genericisation} of a sentence involving
the terms $t_i(A)$ for various sets $A$ and integers $i$,
along an element $b\in E$, is the sentence obtained
by replacing any term $t_i(Ab)$ whose set contains $b$ with the term $t_{i+1}(A)$.

\item The \emph{zeroisation} of a sentence involving
the terms $t_i(A)$ along an element $b\in E$
is the sentence obtained
by replacing any term $t_i(Ab)$ whose set contains $b$ with the term $t_i(A)$.
\end{itemize}
\end{definition}

Note that genericisation and zeroisation commute.
Given a true sentence $\mathcal S$, universally quantified over matroids $M$ over~$R$ and over
every set and element variable appearing therein, the genericisation
and zeroisation of~$\mathcal S$ along~$b$ are also true sentences.  Indeed, the genericisation
of~$\mathcal S$ asserts $\mathcal S$ of sets in $M_{\rm ge}$ containing the generic
element $g$ when the element $b$ is specified to be~$g$,
and the zeroisation asserts $\mathcal S$ of sets in $M\oplus L$
when $b$ is specified to be~0.

We now demonstrate that iterated genericisation and zeroisation allow us to obtain
several different axiom systems for matroids over a valuation ring starting from a
single statement.  The statement we choose as source is a
tropical Pl\"ucker relation.
The following relation holds between the Pl\"ucker coordinates $(p_{B} : B\subseteq[n],|B|=r)$
of any point in the Grassmannian $\Gr(r,\K^n)$:
$$p_{Abc}p_{Ade}-p_{Abd}p_{Ace}+p_{Abe}p_{Acd}=0.$$
By formally tropicalising this relation, using the symbol $t_i(B)$ for the valuation of
$p_B$, we get
\begin{equation}\tag{D4}
\min\big\{t_i(Abc)+t_i(Ade), t_i(Abd)+t_i(Ace), t_i(Abe)+t_i(Acd)\big\}\text{ is attained twice}.
\end{equation}
\begin{proposition}\label{prop:D4}
Let $R$ be a valuation ring.  Statement (D4) is true of all matroids over~$R$.
\end{proposition}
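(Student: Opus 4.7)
The plan is to prove (D4) in two stages: reduce the index-$i$ case to the case $i=0$ via genericisation, then establish the $i=0$ statement by producing a single tuple of representatives in $N=M(A)$ simultaneously witnessing all six relevant pair-quotients, and finally invoking the classical tropical Pl\"ucker relation for a $2\times 4$ matrix.

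For the first stage, iterate Proposition~\ref{prop:add-generic} to form the matroid $M^{(i)} := M_{\mathrm{ge}^i}$ on $E\sqcup\{g_1,\ldots,g_i\}$ obtained by adjoining $i$ generic elements in turn. The identity $t_j(N_{\mathrm{gq}})=t_{j+1}(N)$ recorded just after the proof of Proposition~\ref{prop:add-generic} gives inductively $t_0(Bg_1\cdots g_i;M^{(i)})=t_i(B;M)$ for every $B\subseteq E$. Taking $B\in\{Abc,Abd,Abe,Acd,Ace,Ade\}$ shows that (D4) for $M$ at index~$i$ is word-for-word the same statement as (D4) for $M^{(i)}$ at index~$0$, applied to the subset $Ag_1\cdots g_i$ and the four elements $b,c,d,e$; so it suffices to treat $i=0$.

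For the $i=0$ case, set $N:=M(A)$ and aim to find one tuple $(x_b,x_c,x_d,x_e)\in N^{4}$ such that $M(Ass')\cong N/\langle x_s,x_{s'}\rangle$ for all six pairs $\{s,s'\}\subseteq\{b,c,d,e\}$. Axiom (M) produces such representatives for each pair individually; the crux is to patch six local choices into one global choice. Assume without loss that $R$ has infinite residue field (permissible because only $\val(R)$ enters the statement) and argue, in the spirit of Lemma~\ref{lem:nongeneric submodule} and the proof of Proposition~\ref{prop:add-generic}, that for each pair the tuples in $N^{4}$ that fail the corresponding isomorphism form a proper $R/\m$-subspace; their finite union is then avoided by a generic tuple. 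With the $x_s$'s fixed, apply the structure theorem to write $N\cong\bigoplus_j R/(r_j)$, so that each length $\length\bigl(N/\langle x_s,x_{s'}\rangle\bigr)$ becomes the valuation of the zeroth Fitting ideal of the presentation matrix whose columns are the $r_j e_j$ together with $x_s$ and $x_{s'}$. Discarding the cases in which $N$ has too much free part (then all three min-terms in (D4) are $\infty$ and there is nothing to prove) reduces the claim to the assertion that the min of the three quantities $\val(p_{bc})+\val(p_{de})$, $\val(p_{bd})+\val(p_{ce})$, $\val(p_{be})+\val(p_{cd})$ is attained twice, where $p_{ss'}$ is the $2\times 2$ minor of $(x_b|x_c|x_d|x_e)$ after projecting to the rank-$2$ quotient of $N$. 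This is the classical tropical Pl\"ucker relation, which follows at once by tropicalising $p_{bc}p_{de}-p_{bd}p_{ce}+p_{be}p_{cd}=0$.

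The main obstacle is the simultaneous-choice step: guaranteeing that one four-tuple of representatives witnesses all six pair-quotient isomorphisms at once. Axiom (M) directly addresses only pairs, so patching six local compatibilities into one global choice requires the genericity-of-good-representatives mechanism already used in Proposition~\ref{prop:add-generic}, together with the assumption of an infinite residue field. Once that is in hand, the descent to classical tropical Pl\"ucker relations is the by-now familiar observation that valuations of $2\times 2$ minors of four vectors spanning a rank-$2$ module over a valuation ring satisfy the three-term tropical relation.
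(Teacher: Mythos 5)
Your reduction from general $i$ to $i=0$ via iterated generic extension (Proposition~\ref{prop:add-generic}) and the identity $t_j(N_{\rm gq})=t_{j+1}(N)$ is correct and in the spirit of the paper's genericisation machinery. The trouble is in the $i=0$ case, at exactly the step you flag as the crux.

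The genericity argument for the simultaneous-choice step does not work. You claim, citing Lemma~\ref{lem:nongeneric submodule} and the proof of Proposition~\ref{prop:add-generic}, that for each pair $\{s,s'\}$ the tuples failing $N/\langle x_s,x_{s'}\rangle\cong M(Ass')$ form a proper $R/\m$-subspace of $N^4$, so a generic tuple avoids all six bad loci. But Lemma~\ref{lem:nongeneric submodule} concerns only \emph{generic} quotients: it says the elements \emph{not} generating a summand isomorphic to the largest cyclic summand $C$ form a proper submodule. When $M(As)$ or $M(Ass')$ is a non-generic quotient of $N$---which is the typical situation---the realising locus is not the complement of any submodule. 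For example, with $N=R$ and $M(Ab)\cong R/\m^2$, the admissible $x_b$ are those of valuation exactly~2, the ``shell'' $\m^2\setminus\m^3$; its complement $(R\setminus\m^2)\cup\m^3$ is not a submodule. So the ``avoid a finite union of proper subspaces'' trick gives you no purchase, and the existence of a simultaneous realising tuple---which is the whole content here, since axiom~(M) only speaks about pairs---is left unproved.

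There is also an imprecision in the final descent. When $N\cong\bigoplus_{j=1}^m R/(r_j)$ with $m>2$, $\length\bigl(N/\langle x_s,x_{s'}\rangle\bigr)$ is the valuation of the gcd of all $m\times m$ minors of the $m\times(m+2)$ matrix $[\mathrm{diag}(r_j)\,|\,x_s\,|\,x_{s'}]$, not the valuation of a single $2\times 2$ minor of a $2\times4$ array; ``projecting to the rank-2 quotient'' discards torsion that contributes to these lengths. The desired three-term relation for such gcd-valuations does not follow ``at once'' from tropicalising $p_{bc}p_{de}-p_{bd}p_{ce}+p_{be}p_{cd}=0$; it requires an argument about how minima of a tropical Pl\"ucker vector over overlapping families of index sets interact. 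The paper avoids both issues by arguing purely from the axioms, in the manner of Proposition~5.6 of~\cite{FM} (or, as it notes, via Lemma~\ref{lem:incidence implies pluecker}), never passing through a local realisation.
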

This follows by the proof
of Proposition~5.6 of~\cite{FM}, which goes through verbatim
using the quantities $t_i$ instead of $d_{\leq n}$,
and with Proposition~\ref{prop:t} providing the hypotheses.
In the case $i=0$, statement (D4) is the principal axiom of valuated matroids,
since the length of a module $N$ equals $t_0(N)$.
An alternative proof can be given making use of Lemma~\ref{lem:incidence implies pluecker}.

It follows directly from the above that statements obtained by repeatedly
genericising and zeroising (D4) are also true of all matroids over~$R$.
We give names to the nine statements obtained by genericising
and zeroising from zero to two times each:
the name (D$n^{(k)}$), where the $^{(k)}$ denotes $k$ primes,
is the statement obtained from (D4) by genericising $k$ times
and zeroising $n-k$ times.
In particular the digit $n$ gives the
number of elements of $E\setminus A$ which are quantified over,
following the pattern of our other axiom names.

The genericisation of (D4) along~$e$ is:
\begin{equation}\tag{D3$'$}
\min\big\{t_i(Abc)+t_{i+1}(Ad), t_i(Abd)+t_{i+1}(Ac), t_{i+1}(Ab)+t_i(Acd)\big\}\text{ is attained twice}
\end{equation}
and its zeroisation is:
\begin{equation}\tag{D3}
\min\big\{t_i(Abc)+t_i(Ad), t_i(Abd)+t_i(Ac), t_i(Ab)+t_i(Acd)\big\}\text{ is attained twice}.
\end{equation}
Applying genericisation to~$e$ and zeroisation to~$d$ yields
\begin{equation}\tag{D2$'$}
\min\big\{t_i(Abc)+t_{i+1}(A), t_{i+1}(Ab)+t_i(Ac), t_i(Ab)+t_{i+1}(Ac)\big\}\text{ is attained twice}.
\end{equation}
Generising or zeroising twice leads to a duplication of one of the terms
in the minimum, respectively
\begin{gather*}
\min\big\{t_i(Abc)+t_{i+2}(A), t_{i+1}(Ab)+t_{i+1}(Ac), t_{i+1}(Ab)+t_{i+1}(Ac)\big\}\text{ is attained twice},
\\
\min\big\{t_{i}(Abc)+t_i(A), t_{i}(Ab)+t_{i}(Ac), t_{i}(Ab)+t_{i}(Ac)\big\}\text{ is attained twice},
\end{gather*}
which simplify to
\begin{gather}\tag{D2$''$}
t_i(Abc)+t_{i+2}(A)\geq t_{i+1}(Ab)+t_{i+1}(Ac),
\\\tag{D2}
t_{i}(Abc)+t_i(A)\geq t_{i}(Ab)+t_{i}(Ac).
\end{gather}
By iterating we get:
\begin{gather}\tag{D1$''$}
t_i(Ab)+t_{i+2}(A)\geq t_{i+1}(Ab)+t_{i+1}(A),
\\\tag{D1$'$}
t_{i+1}(Ab)+t_i(A)\geq t_{i}(Ab)+t_{i+1}(A),
\\\tag{D0$''$}
t_i(A)+t_{i+2}(A)\geq 2 t_{i+1}(A).
\end{gather}

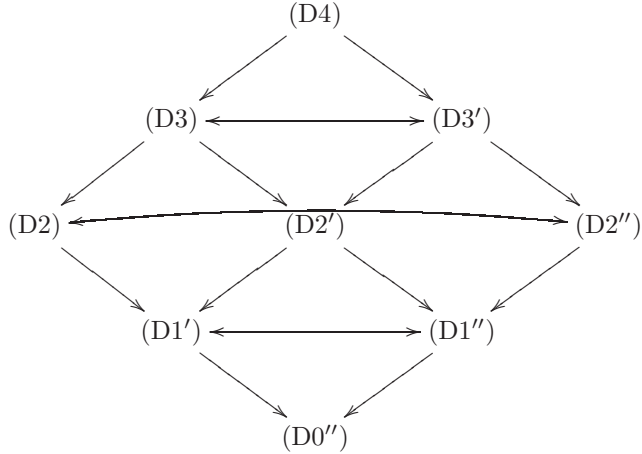
\begin{figure}[hbt]
$$
\xymatrix{
    &    &   (\mathrm D4)\ar[dl]\ar[dr]    &    &   \\
    &(\mathrm D3)\ar[dl]\ar[dr]\ar@{<->}[rr]  &   &  (\mathrm D3')\ar[dl]\ar[dr]  &   \\
(\mathrm D2)\ar[dr]\ar@{<->}@/^/[rrrr]  &    &(\mathrm D2')\ar[dl]\ar[dr]    &    &(\mathrm D2'')\ar[dl]\\
    &(\mathrm D1')\ar[dr]\ar@{<->}[rr]  &   &  (\mathrm D1'')\ar[dl]  &  \\
    &    &   (\mathrm D0'')    &    &
}
$$
\caption{A schematic of the relationships between the nine statements
of this section.}\label{fig:axioms}
\end{figure}
We can organise these axioms in a diagram, Figure~\ref{fig:axioms},
in which rightward descending arrows are genericisations, leftward descending arrows are zeroisations,
and horizontal double arrows are \emph{dualisations}, obtained by replacing
the quantities $t_i(A)$ in the fashion of matroid duality,
to wit
\begin{equation}\label{eq:dual}
t_i(M(A)) = t_{|A|-r+i}(M^*(E\setminus A))
\end{equation}
where $r$ is the generic rank of~$M$.
The statements (D4), (D2$'$), and (D0$''$) on the central axis are self-dual.

We remark that the relation (D0$''$) is axiom~(T0),
while (D1$''$) and (D1$'$) together are
axiom~(T1), and (D2$'$) is axiom~(T2).
Therefore, these four (D) statements, together with (TS), suffice to characterise matroids over~$R$.
Moreover, relation (D2) is submodularity of the $i$-fold generic quotient of~$M$,
and (D3) corresponds to the content of Proposition~5.5 of~\cite{FM}.

\begin{remark}\label{rem:arithmetic}
We compare quasi-arithmetic and arithmetic matroids \cite{D'Adderio-Moci, Branden-Moci}
to matroids over~$\mathbb Z$.
Recall that a \emph{quasi-arithmetic matroid} on finite ground set $E$ is formalised
as a (usual) matroid on the set~$E$ together with the extra data of a \emph{multiplicity function} $m: 2^E \to \mathbb N$,
subject to some axioms; from our point of view,
the matroid is $M\otimes_{\mathbb Z}\mathbb Q$ while $m(A)$ is the cardinality of the torsion subgroup of~$M(A)$.
\emph{Arithmetic matroids} are quasi-arithmetic matroids
which further satisfy a positivity axiom (P)
which (in the realisable case) arises from topology of the associated toric arrangement.

These structures differ from matroids over~$\mathbb Z$ 
in a significant way which was not brought to light in~\cite{FM}.
Namely, no analogue of axiom (D4) or its genericisations and zeroisations
are true for (quasi-)arithmetic matroids, and so the connection to valuated matroids is absent.

For example, if an arithmetic matroid whose underlying matroid is uniform of rank~2 on the set $\{1,2,3,4\}$
has multiplicity function $m$ assigning 1 to every set of cardinality not equal to~2,
then the multiplicities of sets of size~2 can be arbitrary, independently chosen positive integers.
But a (quasi-)arithmetic matroid arising from a matroid over~$\mathbb Z$ must satisfy
(D4) for $i=0$ at every prime $p$, namely that
\[\min\{v_p(m(12))+v_p(m(34)), v_p(m(13))+v_p(m(24)), v_p(m(14))+v_p(m(23))\}\]
is attained twice, where $v_p:\mathbb Q\to\mathbb Z\cup\{\infty\}$ is the $p$-adic valuation.
\end{remark}

\section{Spannable matroids}\label{sec:spannable}
Spanning sets play an important role in classical matroid theory.
However, many perfectly reasonable matroids over~$R$ have no
sets $A\subseteq E$ such that $M(A)=0$, the natural analogue of spanning sets.
The focus of this short section is those matroids that do have such sets~$A$.
They enjoy extra structural properties compared to arbitrary matroids over~$R$.
We will make use of these structural properties in Section~\ref{sec:parameter}.

\begin{definition}
Let $M$ be a matroid over~$R$.  We say that $M$ is \emph{spannable} if $M(E)=0$.
\end{definition}

\begin{lemma}\label{lem:rk 2 generic}
Let $M$ be a matroid over~$R$ on ground set $\{1,2\}$.
If neither $M(\{1\})$ nor $M(\{2\})$ is a quotient of $M(\emptyset)$ by a generic element,
then $M(\{1,2\})$ is a quotient of neither $M(\{1\})$ nor $M(\{2\})$ by a generic element.
\end{lemma}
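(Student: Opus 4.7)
The plan is to realise the matroid concretely via axiom~(M), translate the ``generic quotient'' assertions into the genericity of specific module elements in the sense of Lemma~\ref{lem:nongeneric submodule}, and then prove the lemma by contrapositive.

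Apply axiom~(M) with $A = \emptyset$, $b = 1$, $c = 2$: this furnishes elements $x, y \in N := M(\emptyset)$ with $M(\{1\}) \cong N/(x)$, $M(\{2\}) \cong N/(y)$, and $M(\{1,2\}) \cong N/(x,y)$. Writing $\bar y$ for the image of $y$ in $N/(x)$ and $\bar x$ for the image of $x$ in $N/(y)$, we also have $M(\{1,2\}) \cong (N/(x))/\langle\bar y\rangle \cong (N/(y))/\langle\bar x\rangle$. The four assertions in the lemma now unpack as follows: $M(\{i\})$ is a generic quotient of $M(\emptyset)$ iff $x$ (resp.\ $y$) is a generic element of~$N$, and $M(\{1,2\})$ is a generic quotient of $M(\{i\})$ iff $\bar y$ (resp.\ $\bar x$) is a generic element of the relevant quotient. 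The contrapositive to prove is then: if at least one of $\bar y,\bar x$ is generic in its quotient, then at least one of $x,y$ is already generic in~$N$.

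The two disjuncts in the hypothesis are interchanged by the $1 \leftrightarrow 2$ symmetry, so it is enough to handle the case $\bar y$ generic in $N/(x)$. Suppose for contradiction that neither $x$ nor $y$ is generic in~$N$. Decomposing $N \cong \bigoplus_{k=1}^s R/I_k$ with $I_1 \subseteq \cdots \subseteq I_s$, both $x$ and $y$ lie in $\m N + \{n : \Ann(n) \supsetneq I_1\}$ by Lemma~\ref{lem:nongeneric submodule}. The hypothesis on~$\bar y$ says that $\langle\bar y\rangle$ is a direct summand of $N/(x)$ isomorphic to its largest summand. The argument is to analyse that largest summand of $N/(x)$ in the chosen coordinates on~$N$, use the non-genericity of~$x$ to match it with a summand inherited from~$N$, and then lift a splitting of $N/(x)$ through the quotient map to produce a splitting of $N$ in which $y$ contributes a largest-summand component---contradicting the assumption that $y$ is not generic.

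The principal obstacle is the bookkeeping of invariant-factor data of $N/(x)$ when $x$ is non-generic; $x$ may lie in $\m N$ with several coordinates contributing to different invariant factors, so the largest summand of $N/(x)$ need not visibly arise from a single summand of~$N$. To sidestep the delicate module-theoretic casework I would pursue in parallel a numerical argument via Proposition~\ref{prop:t}: setting $a_i = t_i(\emptyset) - t_{i+1}(\emptyset)$ and analogously $b_i, c_i, d_i$ for $\{1\}, \{2\}, \{1,2\}$, the four assertions of the lemma become arithmetic statements (``$b_i = a_{i+1}$ for all~$i$'' is the genericity of~$x$, ``$d_i = b_{i+1}$ for all~$i$'' is the genericity of~$\bar y$, etc.). The target implication then follows from the equality clause of axiom~(T2) at $A = \emptyset$, $b = 1$, $c = 2$, combined with the interleaving bounds of~(T1); the subtle step is the case where $b_i = c_i$, for which the equality clause of (T2) does not fire and the conclusion must be extracted from the inequality alone via a telescoping sum against (T1).
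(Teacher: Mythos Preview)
Your proposal is not a proof but two partial sketches. The module-theoretic route starts where the paper's own argument does, but stalls when you propose to ``lift a splitting'' and admit the bookkeeping obstacle; the paper instead claims directly that the preimage of $(N/\langle x\rangle)_{\rm gen}$ in~$N$ contains $N_{\rm gen}$, on the grounds that $\Ann(N/\langle x\rangle)=\Ann(N)$ when $x$ is nongeneric. Your numerical route via (T1)--(T2) is a genuinely different idea, but you only assert that the implication ``follows'' and flag the case $b_i=c_i$ as subtle without resolving it.

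In fact neither argument can be completed, because the lemma as stated is false. Let $R$ be a DVR with uniformiser~$\pi$, set $M(\emptyset)=R/(\pi^2)\oplus R/(\pi)$, and realise $M$ by $x_1=(\pi,0)$, $x_2=(0,1)$, so that $M(\{1\})=R/(\pi)\oplus R/(\pi)$, $M(\{2\})=R/(\pi^2)$, $M(\{1,2\})=R/(\pi)$. One checks the axioms of Proposition~\ref{prop:t} directly (the $t_i$-vectors are $(3,1,0)$, $(2,1,0)$, $(2,0,0)$, $(1,0,0)$). Since $M(\emptyset)_{\rm gq}=R/(\pi)$, neither $M(\{1\})$ nor $M(\{2\})$ is a generic quotient of~$M(\emptyset)$; yet $M(\{1,2\})=R/(\pi)=M(\{1\})_{\rm gq}$. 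The paper's proof breaks at the annihilator claim: here $\Ann(N)=(\pi^2)$ but $\Ann(N/\langle x_1\rangle)=(\pi)$, because quotienting by $x_1\in\m N$ can shrink the largest cyclic summand. In your numerical approach the failure surfaces at $i=0$: under the hypothesis $\delta_i=\beta_{i+1}$, the equality clause of~(T2) gives only $\alpha_1-\gamma_1=\min(b_0,c_0)=1$, which is perfectly compatible with $\beta_0>\alpha_1$ and $\gamma_0>\alpha_1$ both holding.
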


\begin{proof}
Choose elements $x$ and $y$ in $M(\emptyset)=:N$ permitting the identifications
$M(\{1\})=N/\langle x\rangle$,
$M(\{2\})=N/\langle y\rangle$, and
$M(\{1,2\})=N/\langle x,y\rangle$.
By Lemma~\ref{lem:nongeneric submodule}, both $x$ and~$y$ lie in
the submodule $N_{\rm gen}$ defined there.
The preimage of $(N/\langle x\rangle)_{\rm gen}$ in $N$
contains $N_{\rm gen}$: this is easy to check keeping
in mind that $\Ann(N/\langle x\rangle) = \Ann(N)$, both
of these being equal to the annihilator of the largest cyclic summand
of $N$, which is preserved in $N/\langle x\rangle$.
Therefore the image of $y$ in $N/\langle x\rangle$
falls in $(N/\langle x\rangle)_{\rm gen}$; this and its analogue with
$x$ and $y$ reversed are the lemma.
\end{proof}

\begin{lemma}\label{lem:spannable generic}
If $M$ is a spannable matroid over~$R$ on ground set~$E$, then for every
proper subset $A\subsetneq E$ there is an element $b\in E\setminus A$
such that $M(Ab)$ is a quotient of $M(A)$ by a generic element.
\end{lemma}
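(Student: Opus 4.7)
The plan is to argue by contradiction. If $M(A)=0$ the statement is trivial: axiom~(M) forces $M(Ab)$ to be a quotient of~$0$, hence itself~$0$, while $M(A)_{\rm gq}=0$ as well, so any $b\in E\setminus A$ works. I would therefore assume $M(A)\neq 0$ and suppose for contradiction that for every $b\in E\setminus A$ the module $M(Ab)$ is \emph{not} a generic quotient of~$M(A)$.

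The main step would be to propagate this non-genericity upward through the subset lattice. I would prove, by induction on $|A'\setminus A|$, that for every $A'$ with $A\subseteq A'\subsetneq E$ and every $c\in E\setminus A'$, the module $M(A'c)$ fails to be a generic quotient of~$M(A')$. The base case $A'=A$ is the standing assumption. For the inductive step, fix $A'\supsetneq A$ and $c\in E\setminus A'$, pick any $b\in A'\setminus A$, and set $A''=A'\setminus\{b\}$. The inductive hypothesis applied at~$A''$ tells us that both $M(A'')\to M(A''b)=M(A')$ and $M(A'')\to M(A''c)$ are non-generic. Lemma~\ref{lem:rk 2 generic}, whose proof only uses axiom~(M) and Lemma~\ref{lem:nongeneric submodule} applied to the set~$A''$ and the pair~$b,c$, then yields that $M(A''bc)=M(A'c)$ is not a generic quotient of~$M(A''b)=M(A')$, closing the induction.

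To derive the contradiction, I would fix a saturated chain $A=A_0\subsetneq A_1\subsetneq\cdots\subsetneq A_n=E$ with $|A_{i+1}\setminus A_i|=1$. Since $M(A_0)\neq 0$ and $M(A_n)=0$, let $j$ be the smallest index with $M(A_j)=0$, so that $M(A_{j-1})\neq 0$. By axiom~(M), $M(A_j)=M(A_{j-1})/\langle x\rangle$ for some $x\in M(A_{j-1})$, and the vanishing of the quotient forces $\langle x\rangle=M(A_{j-1})$; that is, $M(A_{j-1})$ is cyclic. But for a nonzero cyclic module the largest cyclic summand is the whole module, so $M(A_{j-1})_{\rm gq}=0=M(A_j)$, making the step $A_{j-1}\to A_j$ a generic quotient. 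This contradicts the propagated non-genericity along the chain, completing the argument.

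The main thing to watch is the transport of Lemma~\ref{lem:rk 2 generic} from its stated form on a two-element ground set to the local pair $(b,c)$ relative to an arbitrary underlying set~$A''$; this is routine since that lemma's proof is stated uniformly in the underlying set via axiom~(M). The substantive idea is the observation that $M(E)=0$ forces a ``generic collapse'' at some step of every saturated chain, so a global failure of genericity at the bottom is instantly contradicted at the top.
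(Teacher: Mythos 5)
Your proof is correct and takes essentially the same route as the paper's: both hinge on propagating non-genericity via Lemma~\ref{lem:rk 2 generic}, and both reach a contradiction from the fact that spannability forces a generic step near the top. The paper packages this as an extremal argument (take a maximal failing $A$, note $|A|\leq|E|-2$ since the conclusion holds when $M(A)$ is cyclic, then one application of Lemma~\ref{lem:rk 2 generic} shows $Ab$ fails too), whereas you unroll the same propagation as an explicit induction up the lattice and then locate the generic collapse at the first cyclic-to-zero crossover in a saturated chain; the logical content is the same.
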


\begin{proof}
The result is true when $|A| = |E|-1$, as then $M(A)$ is cyclic and $M(E)=0$.
So if $A$ were a maximal set for which the conclusion
of the lemma didn't hold, we would have $|A|\leq|E|-2$.
Let $b\in E\setminus A$.  For every element $c$ of $E\setminus(Ab)$,
Lemma~\ref{lem:rk 2 generic} shows that $M(Abc)$ is not a
quotient of $M(Ab)$ by a generic element, and therefore the
conclusion is also false of $Ab$, contradiction.
\end{proof}

\begin{corollary}\label{cor:sg1}
If $M$ is a spannable matroid over~$R$, then
\begin{equation}\label{eq:sg1}
t_{i+1}(M(A)) = \min_{b\not\in A}\{t_i(M(Ab))\}
\end{equation}
for all $i\geq0$ and $b\not\in A$.
\end{corollary}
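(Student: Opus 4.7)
The plan is to establish both inequalities in \eqref{eq:sg1} separately, working under the tacit assumption $A\subsetneq E$ (which is what the quantifier ``$b\not\in A$'' really demands, and is necessary since for $A=E$ the minimum is $\infty$ by the convention in Section~\ref{ssec:conventions} whereas spannability gives $t_{i+1}(M(E))=0$).

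For the inequality $\min_{b\not\in A} t_i(M(Ab)) \leq t_{i+1}(M(A))$: apply Lemma~\ref{lem:spannable generic} to obtain some particular $b\in E\setminus A$ for which $M(Ab)\cong M(A)_{\rm gq}$. The observation recorded just before the definition of genericisation tells us that $t_i(N_{\rm gq}) = t_{i+1}(N)$ for any finitely presented $R$-module $N$, applied with $N=M(A)$ this gives $t_i(M(Ab)) = t_{i+1}(M(A))$. The minimum over all $b\in E\setminus A$ is then at most this single value.

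For the reverse inequality, fix an arbitrary $b\in E\setminus A$. Axiom~(M) of Definition~\ref{def:matroid} supplies an element $x\in M(A)$ with $M(Ab)\cong M(A)/\langle x\rangle$. Given any $i$ elements $x_1,\ldots,x_i\in M(Ab)$, lift them to $\tilde x_1,\ldots,\tilde x_i\in M(A)$; by the third isomorphism theorem
\[ M(Ab)/\langle x_1,\ldots,x_i\rangle \;\cong\; M(A)/\langle x,\tilde x_1,\ldots,\tilde x_i\rangle, \]
and the length of the right-hand side is bounded below by $t_{i+1}(M(A))$ directly from the defining expression for $t_{i+1}$ as a minimum over $(i+1)$-tuples. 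Taking the minimum over $x_1,\ldots,x_i$ yields $t_i(M(Ab))\geq t_{i+1}(M(A))$, and then taking the minimum over $b$ gives the desired lower bound.

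There is no real obstacle here; the argument is essentially a matter of unwinding the definition of $t_i$ together with the two preceding lemmas. The conceptual content worth flagging is the asymmetry between the two bounds: the inequality $\geq$ holds for an arbitrary matroid over~$R$ (it expresses that quotienting by a cyclic submodule cannot decrease $t_i$ below $t_{i+1}$ of the original module), whereas the matching upper bound is precisely where spannability enters, via Lemma~\ref{lem:spannable generic}, ensuring that at every proper $A$ some quotient $M(Ab)$ really is generic and therefore achieves the bound.
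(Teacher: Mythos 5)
Your proof is correct, and for the harder inequality it takes a cleaner route than the paper does. Both proofs get the ``$\leq$'' direction the same way, via Lemma~\ref{lem:spannable generic} and the identity $t_i(N_{\rm gq})=t_{i+1}(N)$. For the ``$\geq$'' direction, namely that $t_i(M(Ab))\geq t_{i+1}(M(A))$ for every $b$, the paper translates to the invariants $d_\ell$ and argues through the structure theory (that $d_\ell(N/\langle x\rangle)\in\{d_\ell(N),d_\ell(N)-1\}$, and that the generic quotient achieves $d_\ell(N)-1$ on the whole relevant range of $\ell$). Your argument instead reads the inequality straight off the definition of $t_{i+1}$ as a minimum over $(i+1)$-tuples: any $i$ generators of $M(Ab)$ lift to give $i+1$ generators of $M(A)$, and the correspondence theorem identifies the quotients. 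This is shorter, more elementary, and makes explicit something the paper leaves implicit: the ``$\geq$'' inequality holds for arbitrary matroids over~$R$, with spannability needed only to realise equality. Your remark about the degenerate case $A=E$ (where the minimum over the empty index set is $\infty$ by convention, but $t_{i+1}(M(E))=0$) is also well taken; the paper's quantifier ``for all $b\not\in A$'' is evidently meant to enforce $A\subsetneq E$.
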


\begin{proof}
Section~\ref{sec:D4} tells us that if $N_{\rm gq}$ is a generic quotient of~$N$,
then $t_i(N) = t_{i+1}(N_{\rm gq})$.  This implies the corollary
given the observation that the quantities $t_i(N/\langle x\rangle)$
are all simultaneously minimised when $N/\langle x\rangle\cong N_{\rm gq}$.
This observation is equivalent, by the techniques of Proposition~\ref{prop:t},
to the fact that the same quotients
simultaneously maximise $d_\ell(N/\langle x\rangle)$ for all $\ell\in\val(R)^+$.
Indeed, $d_\ell(N/\langle x\rangle)$ equals either
$d_\ell(N)$ or $d_\ell(N)-1$, and equals the former
when $\ell$ exceeds the valuation of an element annihilating~$N$.
And if $N/\langle x\rangle\cong N_{\rm gq}$, then
$d_\ell(N/\langle x\rangle) = d_\ell(N)-1$ for all $\ell$ not this large.
\end{proof}

\begin{corollary}\label{cor:spannable determined}
A spannable matroid $M$ over~$R$ is determined by the data
$t_0(A) = \length(M(A))$ for all $A\subseteq E$.
\end{corollary}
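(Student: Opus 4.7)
The approach is direct: as recorded in the paragraph preceding Proposition~\ref{prop:t}, a matroid over~$R$ is uniquely determined by the collection of all $t_i(A)$ for $i \geq 0$ and $A \subseteq E$. So it suffices to show that, for a spannable matroid, the full collection is determined by the slice at $i = 0$.

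I would proceed by induction on $i$, showing that $(t_i(A))_{A \subseteq E}$ is determined by $(t_0(A))_{A \subseteq E}$. The base case $i = 0$ is the given data. For the inductive step, assume $t_i(A)$ is known for every $A \subseteq E$. For $A = E$, spannability gives $M(E) = 0$, hence $t_{i+1}(E) = 0$. For $A \subsetneq E$, Corollary~\ref{cor:sg1} supplies the recursion
\[
t_{i+1}(A) \;=\; \min_{b \in E \setminus A} t_i(Ab),
\]
whose right-hand side is already known by the inductive hypothesis (and in which every set $Ab$ is strictly larger than $A$, so the recursion is well-founded).

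There is essentially no obstacle to overcome here: Corollary~\ref{cor:sg1}, already established, does all the work, and the only care needed is the boundary case $A = E$, which is covered directly by the definition of spannability.
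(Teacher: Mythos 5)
Your proof is correct and takes essentially the same route as the paper's: the paper also proves this by observing that the values $t_i(A)$ for $i>0$ are recovered by $i$-fold application of Corollary~\ref{cor:sg1}. If anything you are slightly more careful than the paper, since you handle the boundary case $A=E$ separately (where the minimum in \eqref{eq:sg1} is over the empty set and the convention would give~$\infty$, whereas spannability directly gives $t_{i+1}(E)=0$), a case the paper's one-line proof glosses over.
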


\begin{proof}
The values of $t_i(A)$ for $i>0$ are obtained by $i$-fold application of Corollary~\ref{cor:sg1}.
\end{proof}

\begin{corollary}\label{cor:i>=l}
Let $M$ be a matroid over~$R$ on ground set $E$,
and suppose $M(E)$ can be presented with $\ell$ generators.
Then \eqref{eq:sg1} holds when $i\geq\ell$,
and therefore $M$ is determined by the data
$t_i(A)$ for all $A\subseteq E$ and $0\leq i\leq\ell$.
\end{corollary}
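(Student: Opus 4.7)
The plan is to reduce the first claim to Corollary~\ref{cor:sg1} by replacing $M$ with a spannable matroid obtained from $\ell$ iterated generic extensions. Let $g_1,\ldots,g_\ell$ be new elements, and iterate Proposition~\ref{prop:add-generic} to build a matroid $M^{(\ell)}$ over~$R$ on the ground set $E\cup\{g_1,\ldots,g_\ell\}$ for which $M^{(\ell)}(A\cup\{g_{i_1},\ldots,g_{i_k}\})$ is the $k$-fold generic quotient of $M(A)$ whenever $A\subseteq E$ and $\{i_1,\ldots,i_k\}\subseteq\{1,\ldots,\ell\}$.

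I would then verify that $M^{(\ell)}$ is spannable. By the classification of finitely presented modules over~$R$ recalled in Section~\ref{sec2}, $M(E)\cong R/(a_1)\oplus\cdots\oplus R/(a_k)$ with $k=\dim_{R/\m}M(E)/\m M(E)\leq\ell$, since $M(E)$ admits an $\ell$-generator presentation. Because each generic quotient removes exactly one cyclic summand, after $\ell\geq k$ iterations one reaches zero, so $M^{(\ell)}(E\cup\{g_1,\ldots,g_\ell\})=0$.

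Applying Corollary~\ref{cor:sg1} to $M^{(\ell)}$ at $A'=A\cup\{g_1,\ldots,g_\ell\}$ for $A\subsetneq E$ then gives
\[
t_{i+1}\bigl(M^{(\ell)}(A')\bigr)=\min_{b\in E\setminus A}t_i\bigl(M^{(\ell)}(A'b)\bigr),
\]
where the $g_j$ already lie in $A'$, so the minimum ranges only over $E\setminus A$. Iterating the identity $t_j(N_{\rm gq})=t_{j+1}(N)$ from Section~\ref{sec:D4} yields $t_j(M^{(\ell)}(A'))=t_{j+\ell}(M(A))$ and $t_i(M^{(\ell)}(A'b))=t_{i+\ell}(M(Ab))$. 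Substituting and renaming $i+\ell$ to $j$ recovers \eqref{eq:sg1} for all $j\geq\ell$ and $A\subsetneq E$.

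For the second claim I would argue by induction on $i$ that $t_i(A)$ is determined by the data $\{t_j(B):0\leq j\leq\ell,\ B\subseteq E\}$. The base case $i\leq\ell$ is immediate. For $i>\ell$ and $A\subsetneq E$, the first claim reconstructs $t_i(A)=\min_{b\in E\setminus A} t_{i-1}(Ab)$ from previously determined values; for $A=E$, the fact that $t_\ell(E)=0$ (part of the given data, forced by the $\ell$-generator hypothesis) together with axiom~(T0) forces $t_i(E)=0$ for all $i\geq\ell$. The main obstacle is the module-theoretic verification that iterated generic quotient annihilates an $\ell$-generated module in $\ell$ steps, together with checking that iterating Proposition~\ref{prop:add-generic} genuinely realises iterated generic quotient on each $M(A)$; the rest is bookkeeping.
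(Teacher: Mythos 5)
Your proposal is correct and follows essentially the same route as the paper: pass to the $\ell$-fold generic extension, verify it is spannable using the module classification, apply Corollary~\ref{cor:sg1} at $A\cup G$, and translate back via $t_j(N_{\rm gq})=t_{j+1}(N)$. The paper handles the determination claim by citing Corollary~\ref{cor:spannable determined} applied to the extension (of which $M$ is a deletion), whereas you argue by induction on $i$ and treat $A=E$ separately, but these amount to the same bookkeeping.
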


\begin{proof}
Apply Corollary~\ref{cor:spannable determined} to the $\ell$-fold
generic extension $M'$ of~$M$ (Proposition~\ref{prop:add-generic}),
which is spannable and of which $M$ is a deletion.
If $G$ is the set of $\ell$ generic elements added, then
\begin{align*}
t_{j+\ell+1}(M(A)) &= t_{j+1}(M'(A\cup G))
\\&= \min_{b\not\in A}\{t_j(M'(Ab\cup G))\}
\\&= \min_{b\not\in A}\{t_{j+\ell}(M(Ab))\}
\end{align*}
for any $j\geq0$.
\end{proof}

The dual of a matroid $M$ over~$R$ is spannable if and only if $M(\emptyset)$ is a free module.
It will be convenient to have terminology for these as well.
\begin{definition}
Let $M$ be a matroid over~$R$.  We say that $M$ is \emph{cospannable} if its dual is spannable,
and \emph{bispannable} if it is both spannable and cospannable.
\end{definition}

\begin{corollary}\label{cor:span cospan}
A bispannable matroid $M$ over~$R$ is determined by the modules $M(A)$ with $|A|=r$,
where $M(\emptyset)\cong R^r$ (i.e.\ $r$ is the generic rank of~$M$).
\end{corollary}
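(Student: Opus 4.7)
The plan is to invoke Corollary~\ref{cor:spannable determined}, which uses spannability to say that $M$ is determined by the function $A \mapsto t_0(M(A)) = \length M(A)$. It therefore suffices to recover this function on all subsets $A \subseteq E$ from the given data at subsets of size~$r$. I would treat three cases according to $|A|$ versus $r$: the case $|A| = r$ is immediate, and the other two cases use, respectively, the two halves of bispannability.

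For $|A| > r$, I would combine duality and the spannability of $M^*$. The duality formula~\eqref{eq:dual} gives $t_0(M(A)) = t_{|A|-r}(M^*(E \setminus A))$, and iterating Corollary~\ref{cor:sg1} applied to the spannable matroid $M^*$ expresses the right-hand side as the minimum of $t_0(M^*(C))$ over $C \supseteq E \setminus A$ with $|C| = |E|-r$. Passing to complements $A'' = E \setminus C$ (so $A'' \subseteq A$ and $|A''| = r$) and applying~\eqref{eq:dual} a second time turns this into
\[t_0(M(A)) = \min\{t_0(M(A'')) : A'' \subseteq A,\ |A''| = r\},\]
which depends only on the given data.

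For $|A| < r$, I would use the other half of bispannability, namely $M(\emptyset) \cong R^r$. Iterating axiom~(M) inductively on $|A|$, any $M(A)$ is presentable as a quotient of $R^r$ by a submodule with at most $|A|$ generators, so after tensoring with $K = \Frac R$ it has $K$-dimension at least $r - |A| \geq 1$. Appealing to the Warfield decomposition \cite[Theorem~1]{Warfield} of $M(A)$ as a direct sum of cyclic modules $R/I_j$ with $I_j$ principal, this lower bound on the $K$-dimension forces at least $r - |A|$ of the ideals $I_j$ to vanish. Hence $M(A)$ contains~$R$ as a direct summand, so $t_0(M(A)) = \length M(A) = \infty$ by the paper's convention for length.

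The main obstacle I expect is this last case: unlike the others it cannot be handled by purely formal manipulation of the axioms (T0)--(T2), and it requires outside input from the structure theory of finitely presented $R$-modules to locate a free submodule of $M(A)$. The $|A| > r$ case, by contrast, is essentially a bookkeeping exercise in duality plus iterated application of Corollary~\ref{cor:sg1}.
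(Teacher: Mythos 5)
Your proof is correct, and uses the same essential ingredients as the paper's: Corollary~\ref{cor:sg1}, the duality formula~\eqref{eq:dual}, and the structure-theory observation that a nonzero free summand forces infinite length. The organization differs slightly but not substantially. You first invoke Corollary~\ref{cor:spannable determined} to reduce the problem to recovering only the lengths $t_0(M(A))$, then handle $|A|>r$ via cospannability of~$M$ (sg1 iterated on~$M^*$) and $|A|<r$ via the free-summand argument. The paper instead recovers all the $t_i(A)$ at once: for $|A|<r$ it iterates Corollary~\ref{cor:sg1} on~$M$ itself to determine $t_i(A)$ when $i+|A|\geq r$ and asserts $t_i(A)=\infty$ when $i+|A|<r$, then dualises to cover $|A|>r$. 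These are mirror-image decompositions linked by duality, and your explicit reduction to $t_0$ is a modest streamlining. Your flagged worry about the $|A|<r$ case needing input beyond (T0)--(T2) is well taken but not a defect: the paper's proof silently relies on the very same fact (its claim that $t_i(A)=\infty$ for $i+|A|<r$ is exactly the statement you prove at $i=0$, and is justified in the same way).
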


\begin{proof}
For $|A|<r$, Lemma~\ref{cor:sg1} determines the values of $t_i(A)$ when $i+|A|\geq r$,
while $t_i(A)$ must equal $\infty$ when $i+|A|<r$.
The same argumentation on the dual recovers the remaining data.
\end{proof}

We expect that there are other interesting cryptomorphisms to be found for the spannable matroids.

\begin{question}
Let $G$ be an ordered cancellative abelian monoid.
How can one characterise the functions assigning to each subset $A\subseteq E$
an element $t_0(A)\in G$ that arise from some spannable matroid over a valuation ring?
\end{question}

\begin{question}\label{q:hyperfield}
Is there a hyperring structure $H$ on the set of $R$-modules such that
bispannable matroids over~$R$ are exactly matroids over the hyperring~$H$?
\end{question}

It may also be possible to write down a meaningful polyhedron based on
the rank description of the usual matroid polytope,
unlike the polyhedron $P(M)$ of Section~\ref{sec:P} that is our focus in this paper,
which takes off from the vertex description.

\begin{question}
Let $M$ be a spannable matroid over~$R$.  Define the polytopal cone
\begin{align*}
RP(M) = \{x\in\R^{E\times\val(R)^+} :\quad  & x_{a,\ell}\geq x_{a,m}\mbox{ for all $\ell<m\in\val(R)^+$};
\\& \sum_{a\not\in A}x_{a,\ell}\geq d_{\ell}(M(A))\mbox{ for all $\ell\in\val(R)^+$, $A\subseteq E$}\}.
\end{align*}
Characterise the polyhedra that arise as $RP(M)$.
Does $RP(M)$ determine $M$?  What are its vertices?
\end{question}

\section{The polyhedron $P(M)$}\label{sec:P}

In this section we introduce an analogue of the
matroid polytope for matroids over~$R$,
where $R$ is a valuation ring of height one.  The height one assumption
is what guarantees that the polyhedra we produce will be subsets of real vector spaces.
We first recall the definition of valuated matroid,
and then the definition and cryptomorphic nature of the
\emph{matroid (basis) polytope} for usual matroids,
introduced in~\cite{Edmonds,GGMS}, and the analogous polyhedron
associated to a valuated matroid, treated by Speyer in \cite[Prop~2.2]{Speyer}.

Throughout this section, the ground set $E$ of all matroids and matroids over rings
will have $|E| = n$.  We will identify $\R^E$ and its basis $\{e_a : a\in E\}$
with $\R^n$ and its standard basis, thereby implicitly identifying
$E$ with~$[n]$.  Define $e_A = \sum_{a\in A}e_a$.

``Valuated matroid'' was the name given by Dress and Wenzel~\cite{DW}
to their generalisation of matroids to encode the valuation data associated to
linear dependences of vectors in a field with nonarchimedean valuation.
A \emph{valuated matroid} on ground set $E$ of rank $r$
is a function~$v:\binom Er\to\R\cup\{\infty\}$
satisfying equation (D4) with $t_i=v$, that is
\begin{equation*}
\min\big\{v(Abc)+v(Ade), v(Abd)+v(Ace), v(Abe)+v(Acd)\big\}\text{ is attained twice},
\end{equation*}
for all $A\in\binom E{r-2}$ and $b,c,d,e\not\in A$,
and such that the support $v^{-1}(\R)$ of~$v$ is the set of bases of a matroid.

In tropical geometry valuated matroids play a central role as the data determining a linear space.
In this aspect they are given the name \emph{tropical Pl\"ucker vectors}: see further discussion in Section~\ref{sec:TLS}.
Speyer introduced the study of tropical linear spaces.
His work identified them as certain subcomplexes of the normal complex to a regular subdivision of a matroid polytope.
In the present work we focus not on the regular subdivision but on the polyhedron in one more dimension inducing it,
which we refer to as the \emph{lifted} polyhedron.

If $M$ is a matroid on ground set $E$, its basis polytope is defined to be
\[P(M) = \conv\{e_B : \mbox{$B$ a basis of $M$}\}\subseteq\R^E.\]
If $(M,v)$ is a rank~$r$ valuated matroid on ground set $E$,
its lifted polyhedron is
\[P(M) = \conv\{(e_A,v(A)):A\in{\textstyle\binom Er}\}+\cone\{(\underline 0,1)\}\subseteq\R^E\times\R,\]
with the understanding that points $(e_B,v(B))$ with an infinite coordinate
$v(B)=\infty$ should be omitted from the convex hull (from the projective point of view,
they are made redundant by the Minkowski sum with the ray).
Note that all of the points $e_B$, respectively $(e_A,v(A))$,
are vertices of~$P(M)$.

Let $\pi:\R^E\times\R\to\R^E$ be the projection onto the first factor.
Recall the definition of recession cone from Section~\ref{ssec:conventions}.
\begin{theorem}\label{thm:matroid polytope}\mbox{}
\begin{enumerate}\renewcommand{\labelenumi}{(\alph{enumi})}
\item \cite[Thm.~4.1]{GGMS} If $M$ is a matroid, then each edge of~$P(M)$
has direction vector $e_i-e_j$ for some $i,j\in E$.

Conversely, any polytope $P\subseteq\R^E$ all of whose
vertices lie in $\{0,1\}^E$ and all of whose edges have direction
vectors of form $e_i-e_j$, occurs as $P(M)$ for some~$M$.
\item \cite[Prop.~2.2]{Speyer} If $(M,v)$ is a valuated matroid, then the projection of
each edge of~$P(M)$ by~$\pi$
has direction vector $e_i-e_j$ for some $i,j\in E$.

Conversely, any polyhedron $P\subseteq\R^E\times\R$ whose recession cone is
$\cone\{(\underline 0,1)\}$, the projections of whose vertices by~$\pi$
lie in $\{0,1\}^E$, and the projections of whose edges by~$\pi$
all have direction vectors of form $e_i-e_j$, occurs as $P(M)$ for some~$M$.
\end{enumerate}
\end{theorem}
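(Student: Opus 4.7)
Since these are recollections of classical results from~\cite{GGMS} and~\cite{Speyer}, my plan is to sketch the key ideas and explain how part~(b) reduces to part~(a) together with the tropical Pl\"ucker relation~(D4).

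For the forward direction of part~(a), suppose $e_B$ and $e_{B'}$ are joined by an edge of $P(M)$, and fix $i\in B\setminus B'$. Symmetric basis exchange produces $j\in B'\setminus B$ such that $B_1:=B-i+j$ and $B_1':=B'+i-j$ are both bases. A direct check shows $B_1\neq B_1'$, and the identity $e_{B_1}+e_{B_1'}=e_B+e_{B'}$ shows that if $|B\triangle B'|\geq 4$ (so $B_1,B_1'\notin\{B,B'\}$) then the midpoint of $e_B e_{B'}$ admits a second expression as the midpoint of a distinct segment in $P(M)$, contradicting the fact that $e_B e_{B'}$ is an edge. Hence $|B\triangle B'|=2$ and the edge direction is $e_j-e_i$.

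For the converse of part~(a), let $\mathcal{B}$ index the vertices of~$P$. Since $e_i-e_j$ has zero coordinate-sum, all members of~$\mathcal{B}$ share a common cardinality~$r$. To verify symmetric exchange, fix $B,B'\in\mathcal{B}$ and $i\in B\setminus B'$, and consider the face $F:=P\cap\{x_i=1\}$, which contains $e_B$ but not $e_{B'}$. By induction on the dimension of the smallest face containing both $e_B$ and $e_{B'}$ (using that every face of~$P$ inherits the edge-direction hypothesis), one exhibits an edge of~$P$ at $e_B$ with direction $e_j-e_i$ for some $j\in B'\setminus B$; this yields the required basis $B-i+j\in\mathcal{B}$.

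Part~(b) bootstraps from part~(a). In the forward direction, (D4) takes the role of symmetric exchange, ensuring each bounded edge of $P(M,v)$ joins two lifted basis points whose underlying bases differ by a single swap, while unbounded edges are parallel to the recession ray $(\underline{0},1)$ and project to $\underline{0}=e_i-e_i$. Conversely, each bounded face of~$P$ projects by part~(a) onto a matroid polytope; these projections form a regular matroid subdivision of $\pi(P)$, and Speyer's identification of such subdivisions with tropical Pl\"ucker vectors exhibits the lifting function $A\mapsto\min\{t:(e_A,t)\in P\}$ as a valuated matroid. The main obstacle lies in the converse of part~(a): producing an edge of~$P$ at $e_B$ pointing in some direction $e_j-e_i$ with $j\in B'\setminus B$ is the combinatorial heart of the argument, and handling it rigorously---by a careful analysis of 2-faces that ensures the exit from~$F$ along an edge path really does occur adjacent to~$e_B$---is where most of the work in~\cite{GGMS} is concentrated.
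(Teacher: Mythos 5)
The paper does not actually prove this theorem: it is a recollection from \cite{GGMS}, \cite{Speyer}, and \cite{Edmonds}, cited without proof, so there is no in-paper argument for your sketch to match. Your sketch of the forward direction of~(a) is exactly the classical symmetric-exchange-plus-midpoint argument and is correct as stated.

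The converse of~(a) is where your sketch has a real gap, which you partly acknowledge. As written, the face $F = P\cap\{x_i=1\}$ is introduced and then never used, and the proposed ``induction on the dimension of the smallest face containing $e_B$ and $e_{B'}$'' is not yet an argument: it does not explain why an edge at $e_B$ leaving $\{x_i=1\}$ should land at a vertex indexed by an element of $B'\setminus B$ rather than elsewhere. A tight version needs no induction at all. Let $Q$ be the minimal face of~$P$ containing $e_B$ and $e_{B'}$. For each $k\in B\cap B'$ the hyperplane $\{x_k=1\}$ supports $P$ and contains both vertices, hence contains $Q$; likewise $\{x_k=0\}$ supports $P$ and contains $Q$ for each $k\notin B\cup B'$. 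Therefore every vertex of $Q$ is of the form $e_{B''}$ with $B\cap B'\subseteq B''\subseteq B\cup B'$, and every edge of $Q$ at $e_B$ has direction $e_j-e_i$ with $i\in B\setminus B'$ and $j\in B'\setminus B$. Now express $e_{B'}-e_B$ as a nonnegative combination of edge directions of $Q$ at $e_B$, as one may since this vector lies in the tangent cone of $Q$ at $e_B$. Its $i$-coordinate is $-1$, so some contributing edge direction must be $e_j-e_i$ for this $i$, and the corresponding $j\in B'\setminus B$ is the required exchange partner.

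For~(b), your forward direction gestures at the right argument but the midpoint trick needs iterating when the symmetric difference exceeds four; the generalised exchange statement filling this role is Lemma~\ref{lem:gen exch} of this paper, or Speyer's own Proposition~2.2. Your converse for~(b) appeals to Speyer's correspondence between tropical Pl\"ucker vectors and regular matroid subdivisions, which is precisely the content of Proposition~2.2 of \cite{Speyer}; this is circular as a proof, but it is the intended citation, and so is entirely appropriate as a recollection.
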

The forward implication of part~(a) was obtained decades earlier by Edmonds~\cite{Edmonds},
of which it follows from Proposition~19 and the forms of the inequalities in~(17).
In part~(b) we allow $i=j$.

\subsection{Definition and characterisation}

Now let $M$ be a matroid over~$R$ on ground set~$E$.
\begin{definition}
Define the polyhedron
\[P(M) = \conv\{(e_A,i,t_i(A)) : A\subseteq E, i\in\Z_{\geq0}\}
+ \cone\{(\underline 0,0,1)\}\subseteq\R^E\times\R^2.\]
\end{definition}
Again, points $(e_A,i,t_i(A))$ where $t_i(A)=\infty$ are to be excluded
from the convex hull.  Note that $P(M)$ has a finite number of vertices, because for
each $A\subseteq E$, the sequence of $t_i(A)$ eventually becomes 0,
say for $i\geq i_0$; then the point $(e_A,i,0)$ is not a vertex
as it is a convex combination of $(e_A,i_0,0)$ and $(e_A,i+1,0)$.
Similarly we observe that the recession cone of~$P(M)$ is
$$\rec P(M)= \cone\{(\underline 0,1,0),(\underline 0,0,1)\}.$$

The ring $R$ being fixed, $M$ is determined by $P(M)$, because
\[t_i(A) = \min\{y\in\R:(e_A,i,y)\in P(M)\}.\]

We now present a cryptomorphic axiom system for matroids over a fixed valuation ring $R$.
The main difference from the valuated matroid setting is that the set of
allowable positions of vertices is enlarged by two dimensions,
and the set of edge directions is enriched to match.
Valuated matroid polyhedron edges
which do not contract to a point on projection lie in the direction
of one of the roots $e_i-e_j$, $i\neq j$ of the type $A_{n-1}$ root system.
Matroids over~$R$ replace this with a type $A_{n+1}$ root system, with the extra
two dimensions corresponding to genericising and zeroising.
Let $\pi:\R^E\times\R\times\R\to\R^E\times\R$ be the projection onto the first two factors.

\begin{theorem}\label{thm:polyhedron}
The projection of each edge of $P(M)$ by~$\pi$
has direction vector of form $e-e'$, for $e$ and~$e'$ elements of the set
$D = \{(e_a,0) : a\in E\}\cup\{(\underline 0,0),(\underline 0,1)\}$.

Conversely, let $P\subseteq\R^E\times\R^2$ be a polyhedron
satisfying the following conditions:
\begin{enumerate}\renewcommand{\theenumi}{\roman{enumi}}
\item the recession cone of $P$ is
\[\cone\{(\underline 0,1,0),(\underline 0,0,1)\};\]
\item $P$ contains $[0,1]^E\times[i,\infty)\times[0,\infty)$ for some~$i$;
\item all vertices of $P$ lie in $\{0,1\}^E\times\N\times\val(R)$;
\item for any $e_A\in\{0,1\}^E$ and $i\in \N$,
we have $\{\min\{y\in\R:(e_A,i,y)\in P\}\}\in\val(R)$;
and
\item each edge of $P$ has a direction vector whose image under $\pi$ is $e-e'$ for some $e,e'\in D$.
\end{enumerate}
Then $P$ arises as $P(M)$ for some matroid $M$ over~$R$.
\end{theorem}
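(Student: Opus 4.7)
The plan is to analyze the local face structure of $P(M)$ (respectively, $P$) via small-dimensional cross-sections, matching the axioms of Proposition~\ref{prop:t} to geometric constraints on the allowed projected edge directions.

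For the forward direction, let $F$ be a bounded edge of $P(M)$ with vertex endpoints $v_1=(e_A,i,t_i(A))$ and $v_2=(e_B,j,t_j(B))$; unbounded edges are recession rays with projected directions $(\underline 0,1)$ or $(\underline 0,0)$, both in $D-D$. I show $(e_B-e_A,j-i)\in D-D$ by case analysis on the support size of $e_B-e_A$ and on $|i-j|$, with each case ruled out by a specific axiom from the (D) hierarchy of Section~\ref{sec:D4}. When $A=B$, convexity of $(t_k(A))_k$ (axiom~(D0$''$)) forces $|i-j|\leq 1$. When $|A\triangle B|=1$, analyzing the parallelogram of four lattice points $\{A,A\triangle\{b\}\}\times\{i,i+1\}$ lifted to their $t$-values rules out the disallowed diagonals using axioms~(D1$'$) and~(D1$''$). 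When $|A\triangle B|=2$, the six-point hexagonal configuration over $b,c\in A\triangle B$ combined with~(D2$'$) rules out disallowed diagonals. When $|A\triangle B|\geq 3$, a segment $v_1v_2$ that would witness an edge either fails to lie on the lower boundary (when one of axioms~(D3), (D3$'$), (D4) gives a strict inequality) or lies in the interior of a two-dimensional face bounded by segments in allowed directions (when that axiom gives equality); either way $v_1v_2$ is not an edge.

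For the converse direction, define $t_i(A)=\min\{y\in\R:(e_A,i,y)\in P\}$, which lies in $\val(R)\cup\{\infty\}$ by~(iv). Axiom~(TS) follows from~(ii) (giving $t_i(A)\leq 0$ for $i$ sufficiently large) together with $t_i(A)\geq 0$, since $t_i(A)$ is a convex combination of vertex $y$-coordinates lying in $\val(R)\subseteq[0,\infty)$ by~(iii). Axiom~(T0) is convexity of $P$ applied to points over $(e_A,i)$ for consecutive~$i$. For~(T1) and~(T2), I reverse the parallelogram and hexagon arguments of the forward direction: failure of~(D1$'$) or~(D1$''$) would force a disallowed diagonal direction such as $(e_b,1)$ to realise the lower boundary in the corresponding parallelogram, producing an edge of $P$ whose projected direction lies outside $D-D$, contradicting~(v); similarly, failure of~(D2$'$) produces one of three disallowed hexagon diagonals as an edge. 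Proposition~\ref{prop:t} then assembles these inequalities into a matroid over $R$ whose polyhedron is $P$.

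The main obstacle is handling degeneracies in the face structure of $\partial P$: a priori, a parallelogram or hexagon cross-section might be contained in a larger face involving vertices outside the cross-section, which could absorb or ``hide'' the disallowed edge direction. A clean way to sidestep this is to examine the tangent cone of $P$ at an interior point of the cross-section, whose $1$-dimensional rays must lie in $D-D$; the lattice condition~(iii) then rigidly constrains the combinatorics and yields the axioms. Case~(T2)/(D2$'$) is especially delicate, because the min-attained-twice condition permits equality between any pair of the three hexagon sums rather than a single fixed pair, so one must verify all three pairings explicitly.
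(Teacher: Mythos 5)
Your converse direction is essentially the paper's: the paper also restricts attention to low-dimensional faces of $P$ containing the relevant lattice points, examines the induced regular subdivision of a small configuration (a quadrilateral for (T1), an octahedron for (T2)), and rules out any would-be edge in a disallowed direction by condition~(v). Your tangent-cone remark is the same move in slightly different language, and the caveat you flag about faces absorbing the configuration is exactly the delicacy the paper addresses by tracking the induced subdivision~$\Sigma$.

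The forward direction, however, has a genuine gap. The paper's argument is uniform: Lemma~\ref{lem:gen exch}, an exchange statement valid for any two lattice points $v,w\in\{0,1\}^n\times\N$ (homogenised to $n+2$ coordinates), produces a single coordinate exchange $v'=v-e_a+e_b$, $w'=w+e_a-e_b$ with $t(v)+t(w)\geq t(v')+t(w')$ and $v'+w'=v+w$; if a candidate edge had a projected direction outside $D-D$, this forces two more points onto the edge, a contradiction. You replace this with a case analysis on $|A\triangle B|$ and $|i-j|$. The small cases (up to support $2$ in the $E$-coordinates, modulo some bookkeeping on which of (D2), (D2$'$), (D2$''$) applies depending on $j-i$ and whether $|A|=|B|$ or $A\subsetneq B$) can indeed be dispatched by the three-term relations as you sketch. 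But for $|A\triangle B|\geq3$, or for $|j-i|\geq2$ combined with positive symmetric difference, the relations (D3), (D3$'$), (D4) quantify over a common base set $A$ and at most four extra elements, so they never directly compare $t(A)+t(B)$ against an exchanged pair when $A$ and $B$ are far apart; your claim that one of these axioms ``gives a strict inequality or an equality'' ruling out the edge is asserted rather than derived. One needs either an iterated/inductive exchange argument, or a general multi-term tropical Pl\"ucker inequality, and that is precisely what Lemma~\ref{lem:gen exch} supplies (which the paper in turn reduces, via a partition-refinement trick, to Proposition~5.6 of~\cite{FM}). Without an equivalent of that lemma, your forward argument does not close.
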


Although the statement of Theorem~\ref{thm:polyhedron} reads rather more complex
than that of Theorem~\ref{thm:matroid polytope}, we suggest that it is similarly
simple once one takes account of the more complicated structure of the data which
needs to be recorded.  Condition~(v) is the meaty one.
The new edge directions allowed by the inclusion
of $(\underline 0,0)$ and~$(\underline 0,1)$ in the set~$D$
correspond to zeroisation and genericisation,
but they are also not foreign to classical matroid theory:
for instance, edges of the former sort, in directions $e_i$,
appear in independent set polytopes \cite[Chapters~39--40]{Schrijver}.
Indeed, as we'll see in the next subsection, independent set polytopes
appear as faces of~$P(M)$; this fact together with closure under duality
can also be taken to account for the new directions.

The proof calls on a lemma that can be seen as a generalisation of symmetric basis exchange.
Momentarily write $t((e_A,i))$ to mean $t_i(A)$;
the argument of this $t$ is a vector in $\mathbb R^{|E|+1}$.

\begin{lemma}\label{lem:gen exch}
Let $v$ and $w$ be vectors in~$\{0,1\}^n\times\N$.
Write $v_{n+2}$ to denote $-\sum_{i=1}^{n+1} v_i$, and similarly for~$w$.
Let $a\in[n+2]$ be such that $v_a>w_a$,
and $B$ be the set of coordinates $b\in[n+2]$
such that $v_b<w_b$.
Then $t(v)+t(w)$ is not strictly less than every sum of the form
\[t(v-e_a+e_b)+t(w+e_a-e_b)\]
for $b\in B$.
\end{lemma}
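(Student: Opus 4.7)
The plan is to reduce the lemma to the classical symmetric exchange theorem for valuated matroids, applied to an extension of $M$ that adjoins both generic elements and loops. The point is that the three ``types'' of coordinate ($[n]$, $n+1$, $n+2$) correspond bijectively to the three kinds of element one can adjoin to turn $M$ into a bigger matroid: ground-set elements of $E$, generic elements, and loops.

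Construct $M'$ on the ground set $E\sqcup G\sqcup Z$, where $G$ is a finite set adjoined by iterating the generic extension of Proposition~\ref{prop:add-generic}, and $Z$ is a finite set of loops added by direct-summing copies of the single-loop matroid $L$. The result is a matroid over $R$ satisfying $t_0(M'(S)) = t_{|S\cap G|}(M(S\cap E))$ for every $S\subseteq E\sqcup G\sqcup Z$. Given $v=(e_A,i)$ and $w=(e_{A'},i')$, take $|G|$ and $|Z|$ large enough that $v$ and $w$ are represented by subsets $S,S'$ of common cardinality $N$, with $S\cap E=A$, $|S\cap G|=i$, $|S\cap Z|=N-|A|-i$, and analogously for $S'$. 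Arrange further so that $S\cap G$ and $S'\cap G$ are nested by inclusion, and likewise for the $Z$-parts; this is possible once $|G|,|Z|$ are large.

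Restricted to size-$N$ subsets, $\tilde v(S) := t_0(M'(S))$ satisfies the three-term tropical Pl\"ucker relation (D4) at $i=0$ by Proposition~\ref{prop:D4}, hence is a valuated matroid in the sense of~\cite{DW}. The classical symmetric exchange for valuated matroids then yields: for any $a'\in S\setminus S'$, there is $b'\in S'\setminus S$ with
\[\tilde v(S)+\tilde v(S')\ \geq\ \tilde v(S-a'+b')+\tilde v(S'+a'-b').\]
For the coordinate $a\in[n+2]$ supplied by the hypothesis, pick $a'$ of the matching type---an element of $A\setminus A'$, of $(S\cap G)\setminus(S'\cap G)$, or of $(S\cap Z)\setminus(S'\cap Z)$ according as $a\in[n]$, $a=n+1$, or $a=n+2$---each of which is nonempty by the nesting and the hypothesis $v_a>w_a$ (in the last case using that $v_{n+2}>w_{n+2}$ forces $|Z\cap S|>|Z\cap S'|$). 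Decode the type of the returned $b'$ as a coordinate $b\in[n+2]$; by construction $b\in B$, and translating via $\tilde v(S-a'+b')=t(v-e_a+e_b)$ and $\tilde v(S'+a'-b')=t(w+e_a-e_b)$ gives the claimed inequality.

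The main obstacle is the bookkeeping that verifies these translations: one must check, across the nine combinations of types for $(a,b)$, that exchanging $a'$ for $b'$ in $M'$ matches the abstract operation $v\mapsto v-e_a+e_b$. The uniform dictionary is that type-$E$ elements alter the set $A$, type-$G$ elements adjust the number of generic quotients (equivalently, the index~$i$), and type-$Z$ elements affect only the loop count, which is precisely what balances the shadow coordinate $v_{n+2}=-\sum_{j\leq n+1}v_j$. Once the nesting is set up consistently with the signs of $v_a-w_a$ and $v_b-w_b$, the identifications become mechanical.
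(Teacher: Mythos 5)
Your proof is correct and follows essentially the same strategy as the paper's: encode the vector arguments of $t$ as subsets of an enlarged ground set, and read off the desired inequality from valuated-matroid exchange. Two differences are worth noting. First, you make the encoding concrete by constructing the matroid $M'$ over~$R$ explicitly from iterated generic extensions (Proposition~\ref{prop:add-generic}) plus adjoined loops, and verifying $t_0(M'(S)) = t_{|S\cap G|}(M(S\cap E))$; the paper merely gestures at ``letting the ground set $E$ be partitioned into $n+2$ parts'' without exhibiting a matroid realising the encoding. Second, you invoke the Dress--Wenzel symmetric exchange theorem as a black box once (D4) is in hand via Proposition~\ref{prop:D4}, whereas the paper re-runs the argument of~\cite[Prop.~5.6]{FM} with $t_i$ in place of $d_{\leq n}$, citing (D3) and~(D4) as the needed inputs. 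Your route leans on the classical equivalence between the three-term Pl\"ucker relations (together with matroidal support) and full symmetric exchange, but is cleaner and more self-contained in return; and it absorbs (D3) automatically, since instances of (D3) for~$M$ become instances of~(D4) for~$M'$ once loops are adjoined. The nesting of the $G$- and $Z$-parts of $S$ and~$S'$ is exactly the device needed so that the type of the returned $b'$ decodes unambiguously to a coordinate of~$B$, and your type-by-type verification of the dictionary checks out.
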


Observe that some choice of~$a$ will exist unless $v=w$, and in this case $B$ is nonempty.

\begin{proof}
This lemma again follows by the same argument
used to prove Proposition~5.6 of~\cite{FM},
with the quantities $d_{\leq n}$ formally replaced by $t_i$.
That proposition is framed as a tropical Pl\"ucker relation.
The statement of the lemma is recovered by
letting the ground set $E$ be partitioned into $n+2$ parts,
\[E = E_1\mathbin{\dot\cup}E_2\mathbin{\dot\cup}\cdots\mathbin{\dot\cup} E_{n+2},\]
and replacing $v$ by a set containing $v_i+c_i$ elements of $E_i$ for each $i\in[n+2]$
and similarly for $w$,
where $c_i$ are constants chosen to make these numbers nonnegative.

The equivalents of the two hypotheses about the quantities~$p$
used by the proof of Proposition~5.6 of~\cite{FM} are provided by (D3) and~(D4).
\end{proof}

\begin{proof}[Proof of Theorem~\ref{thm:polyhedron}]
Let $F$ be an edge of $\widetilde P(M)$ not in direction $(\underline 0,0,1)$,
and $f$ a linear functional minimised on~$F$.
Since $P(M)$ contains rays in direction $(\underline 0,0,1)$, the $(n+2)$th coefficient of~$f$ is positive.
Let $(v,t(v))$ and $(w,t(w))$ be two points of $F$
so that $v$ and $w$ are two lattice points appearing consecutively on~$\pi(F)$.
Suppose $v-w$ is not of the form $e-e'$, for $e$ and $e'$ elements of~$D$.
Then $v$ and~$w$ are unequal, so they satisfy the hypotheses of Lemma \ref{lem:gen exch}.
The lemma yields another pair
of lattice points $v'=v-e_a+e_b$ and $w'=w+e_a-e_b$ such that
\[t(v)+t(w)\geq t(v')+t(w')\]
and, by construction, $v'+w'=v+w$.
It follows that
\[f\left(\frac{v+w}2,\frac{t(v)+t(w)}2\right) \geq f\left(\frac{v'+w'}2,\frac{t(v')+t(w')}2\right).\]
But $((v+w)/2,(t(v)+t(w))/2)$ is a point of~$F$, the set of minima of~$f$,
and hence $((v'+w')/2,(t(v')+t(w'))/2)$ is as well.
Since these agree in all coordinates but their last, and $F$ is not in direction $(\underline 0,0,1)$,
they are the same point, implying $t(v)+t(w) = t(v')+t(w')$.
Thus $f$ takes its minimum value on $(v',t(v'))$ and $(w',t(w'))$ as well,
and these points are also on~$F$.
This is a contradiction, as $F$ is an edge and these points are not collinear with $v$ and~$w$.

Turning to the converse,
we take the values $t_i(A)$ to be given by
\[t_i(A) = \{\min\{y\in\R:(e_A,i,y)\in P\}\},\]
so that $t_i(A)\in\val(R)$ by~(iv).
It suffices to show the axioms of Proposition~\ref{prop:t} for these,
as then the $t_i(A)$ will define a matroid $M$ over~$R$,
and then $P$ will agree with $P(M)$ by (i) and~(iii).
Axiom (TS) is an immediate translation of assumption~(ii),
while axiom (T0) follows from the convexity of~$P$.

To prove axiom~(T1), we fix $A$ and~$b$ and restrict attention to
the 3-dimensional face $F$ of~$P$ where all coordinates $x_i$ for $i\neq b$ are fixed
to the value 0 (if $i\in A$) or 1 (if $i\not\in A$).
The vertices of~$F$ are all of the form
\[v_{S,i} = (e_S,i,t_i(S))\]
where the set $S$ equals $A$ or $Ab$, and $i\in\N$.
All such points $v_{S,i}$ are contained in~$F$, though they needn't all be vertices.
Let $\Sigma$ be the regular subdivision of the point set
$\{e_A,e_{Ab}\}\times\N$ induced by $F$.

We claim that, for all $i\in\N$, the points $v_{A,i}$, $v_{Ab,i}$, and~$v_{A,i+1}$
are all contained in a common proper face of~$F$,
as are the points $v_{Ab,i}$, $v_{A,i+1}$, and~$v_{Ab,i+1}$.
It is enough to prove that the images under $\pi$ of these points each lie on a common face of~$\Sigma$.
For concreteness let's look at the case of $\pi(v_{A,i})$, $\pi(v_{Ab,i})$, and~$\pi(v_{A,i+1})$;
the other will be similar (the two are exchanged by duality).
If no face of~$\Sigma$ contains these three points,
then there must be either a vertex of~$\Sigma$ in
the interior of their convex hull~$C$---there is not---%
or an edge $e$ of~$\Sigma$ cutting across~$C$.
Not both endpoints of~$e$ are of the form $\pi(v_{S,j})$ for $j\leq i$,
since the convex hull of all these points is disjoint from the interior of~$C$.
Similarly, not both endpoints of~$e$ are of form $\pi(v_{S,j})$ with $j+(|S|-|A|)\geq i+1$;
not both have $S=A$; and not both have $S=Ab$.
All of these constraints together require $e$ to be in a direction not allowed
by condition~(v).

This proves the left inequality of statement~(T1) (the right inequality is the dual case).
For if $v_{A,i}$, $v_{Ab,i}$, and~$v_{A,i+1}$ are all contained in a proper face of~$F$,
therefore of $P$, there is some supporting hyperplane $H$ of~$P$ containing all three.
The point $w=v_{Ab,i}+v_{A,i+1}-v_{A,i}$ lies on~$H$.
By~(i), $P$ lies on the side of~$H$ where the last coordinate increases.
So since $v_{Ab,i+1}$ agrees in all but the last coordinate with~$w$,
its last coordinate must be at least as great,
\[t_{i+1}(Ab)\geq t_i(Ab)+t_{i+1}(A)-t_i(A).\]

Our proof of axiom~(T2) also runs along the above lines but is more tedious.
The relevant face of~$P$ now is the 4-dimensional face $F$ whereon
$x_i=0$ for $i\not\in Abc$ and $x_i=1$ for $i\in A$.
This projects under~$\pi$ to a 3-dimensional regular subdivision $\Sigma$.
There are a number of triples of points of~$F$
which must lie on common faces.  Beyond those argued as part of the last case,
this includes the triples of forms
$(v_{A,i}, v_{Ab,i}, v_{Ac,i})$,
$(v_{Ab,i}, v_{Ac,i}, v_{Abc,i})$,
$(v_{A,i+1}, v_{Ab,i}, v_{Ac,i})$, and
$(v_{Ab,i+1}, v_{Ac,i+1}, v_{Abc,i})$.
If one of the above triples $T$ did not lie on a common face, then
there is a 2-face $G$ which meets their convex hull in its interior.
This face $G$ meets the affine hull of~$T$ in a segment $s$;
it can be argued as before that this segment must be
in one of the directions not permitted by condition~(v).
So $G$ cannot have an edge in direction~$s$.
It must therefore have at least two edges in directions not parallel to
the affine hull of~$T$ whose span includes the direction of~$s$;
but condition~(v) in fact rules this out as well.

Consider the octahedron $O$ with vertices
$\pi(v_{Ab,i})$, $\pi(v_{Ac,i})$, $\pi(v_{Abc,i})$,
$\pi(v_{A,i+1})$, $\pi(v_{Ab,i+1})$, $\pi(v_{Ac,i+1})$.
Each of the faces of $O$ is contained in a face of~$\Sigma$,
so the intersection of~$\Sigma$ with~$O$ is a regular subdivision of~$O$.
Now, the centre point of~$O$ can be written as an affine combination
of its vertices in exactly three extremal ways, namely
as the average of pairs of opposite vertices.
If exactly one of these pairs has minimum average last coordinate in~$F$,
then the edge spanned by the two vertices appears in $\Sigma$,
and is in a direction not permitted by~(v).
So the minimum average last coordinate must be attained by two or more of these pairs,
and this is the content of~(T2).
\end{proof}

\subsection{The global case}

It is a natural step to generalise our cryptomorphism to the case of a unique factorisation domain
by introducing a new coordinate for each prime.
One way to do this is the subject of Proposition~\ref{prop:Dedekind poly}.
But Example~\ref{ex:two primes} shows the resulting polyhedra
cannot be characterised in terms of edges, as they can for the valuation ring case.

If $R$ is a UFD and $M$ is a matroid over~$R$,
let $t_i(A)\in(\N\cup\{\infty\})^{\maxSpec R}$ be the vector indexed by the maximal primes $p\in\maxSpec R$
whose $p$\/th component is $t_i(M_p(A))$, and define
\begin{multline*}
P(M) = \conv\{(e_A,i,t_i(A)) : A\subseteq E, i\in\Z_{\geq0}\}
\\+ \cone\{(\underline 0,1,0),(\underline 0,0,e_p):p\in\maxSpec R\}
\\ \subseteq\R^E\times\R\times\R^{\maxSpec R},
\end{multline*}
where $e_p$ is the standard basis vector for the coordinate corresponding to~$p$,
and again vertices which have an infinite coordinate are thrown out.
For each $p\in\maxSpec R$, let
\[\pi_p:\R^E\times\R\times\R^{\maxSpec R}\to\R^E\times\R\times\R\]
be the projection retaining only the coordinate corresponding to~$p$ from the last of the three blocks of coordinates.
It is not concerning that $P(M)$ is of infinite dimension,
because for all but finitely many primes $p\in\maxSpec R$ 
all modules $M(A)\otimes_R R_p$ are free, so that
$\pi_p(P(M))$ is the Cartesian product of a polyhedron in~$\R^E\times\R$ with~$[0,\infty)$.
Thus there is a linear projection of~$P(M)$ onto a finite dimensional subspace 
(discarding these coordinates $p$)
such that the restriction to the polytope is an affine isomorphism.

\begin{proposition}\label{prop:Dedekind poly}
Let $R$ be a UFD.
Let $P\subseteq\R^E\times\R\times\R^{\maxSpec R}$ be a polyhedron satisfying the following conditions:
\begin{enumerate}\renewcommand{\theenumi}{\roman{enumi}}
\item the recession cone of $P$ is
\[\cone\{(\underline 0,1,0),(\underline 0,0,e_p):p\in\maxSpec R\};\]
\item $P$ contains $[0,1]^E\times[i,\infty)\times[0,\infty)^{\maxSpec R}$ for some~$i$;
\item all vertices of $P$ lie in $\{0,1\}^E\times\N\times\N^{\maxSpec R}$; and
\item for any $e_A\in\{0,1\}^E$ and $i\in \N$,
the set $\{y\in\R:(e_A,i,y)\in P\}$ is either empty or an orthant in the $\R^{\maxSpec R}$ directions
based at a point of $\N^{\maxSpec R}$.
\end{enumerate}
Then $P$ arises as $P(M)$ for some matroid $M$ over~$R$
if and only if, for every prime $p\in\maxSpec R$,
the image $\pi_p(P)$ satisfies the edge direction condition of Theorem~\ref{thm:polyhedron}.
Moreover, any polytope $P(M)$ of a matroid $M$ over~$R$ satisfies (i) through~(iv) above,
and if $P(M)=P(M')$ for a matroid $M'$ over~$R$, then $M=M'$.
\end{proposition}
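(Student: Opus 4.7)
My strategy is to reduce to Theorem~\ref{thm:polyhedron} prime by prime. The key structural observation is that over a UFD $R$, any finitely presented module $N$ decomposes canonically as $R^r\oplus\bigoplus_p \tors{N_p}$, with the sum running over $p\in\maxSpec R$ and $\tors{N_p}$ the torsion summand of the localisation $N_p$ (nonzero for only finitely many $p$). Consequently a matroid $M$ over~$R$ is, up to isomorphism, the same data as a family of matroids $M_p$ over the valuation rings $R_p$ sharing a common generic-rank function.

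I would first dispatch the ``moreover'' claims. Given $M$ over~$R$, conditions (i)--(iii) for $P(M)$ follow from axiom~(TS) applied at each prime, which forces $t_i(M_p(A))=0$ for $i$ sufficiently large; condition~(iv) holds because the fibre $\{y:(e_A,i,y)\in P(M)\}$ is either the orthant $t_i(A)+[0,\infty)^{\maxSpec R}$ or is empty, the latter occurring exactly when $i$ is less than the (common, $p$-independent) generic rank of~$M(A)$. Uniqueness of~$M$ from $P(M)$ is then immediate, since each $t_i(A)$ is read off as the base point of that orthant and $M$ is determined from its $t_i$-values via Proposition~\ref{prop:t} applied prime by prime. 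For the forward direction of the iff, I observe that $\pi_p(P(M))$ coincides with the matroid-over-$R_p$ polyhedron $P(M_p)$: vertices $(e_A,i,t_i(A))$ project to $(e_A,i,t_i(M_p(A)))$ and the recession cone projects correctly. Theorem~\ref{thm:polyhedron} applied to $M_p$ over the valuation ring~$R_p$ then yields the required edge directions for~$\pi_p(P)$.

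For the converse, I take $P$ satisfying (i)--(iv) with each $\pi_p(P)$ satisfying the edge condition, and produce~$M$. Conditions (i)--(iv) for~$P$ imply the hypotheses of Theorem~\ref{thm:polyhedron} for each $\pi_p(P)$; in particular condition~(iv) guarantees a common ``infinity pattern'' across primes, so that a single generic-rank function $r(A)$ is well-defined. Theorem~\ref{thm:polyhedron} then produces matroids $M_p$ over $R_p$ with $\pi_p(P)=P(M_p)$ for every~$p$. I define $M(A) = R^{r(A)} \oplus \bigoplus_p \tors{M_p(A)}$, which localises correctly to~$M_p(A)$ at every~$p$. The main obstacle is verifying axiom~(M) for this assembled~$M$: given $(A,b,c)$, one must produce $x,y\in M(A)$ whose images at each prime realise the axiom for $M_p$, using the elements $x_p,y_p\in M_p(A)$ provided there. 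Cofinitely many primes are trivial (the local modules are free of rank~$r(A)$, so $x_p=y_p=0$ works), and at the finitely many remaining primes the direct-sum decomposition of~$M(A)$ permits the torsion-part coordinates to be prescribed independently, while a weak-approximation argument in~$R$ supplies a single free-part lift compatible with all the chosen $x_p$ and $y_p$ simultaneously. This prime-by-prime gluing is the technical heart of the argument, and once it is carried out the equality $P=P(M)$ is immediate from the uniqueness statement already established.
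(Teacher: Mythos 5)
Your argument follows the same prime-by-prime reduction as the paper, but there is a gap precisely at what you call the ``technical heart.'' The paper delegates the local-to-global assembly entirely to \cite[Prop.~6.2]{FM}, which (for Dedekind domains, with the Picard obstruction void for a UFD) identifies a matroid over~$R$ with a compatible family of matroids $M_p$ over the localisations $R_p$; after that citation, everything else is the elementary bookkeeping you also describe. You instead propose to re-derive this identification, defining $M(A)=R^{r(A)}\oplus\bigoplus_p\tors{M_p(A)}$ and appealing to ``weak approximation'' to produce the elements $x,y\in M(A)$ demanded by axiom~(M). That appeal, as stated, is not executed and is not routine: one must prove that choosing a global $x$ whose localisation agrees with $x_p$ modulo a sufficiently deep power of~$pR_p$ forces $M_p(A)/(x)\cong M_p(A)/(x_p)$, and do so for $x$ and $y$ \emph{simultaneously} so that the joint quotient $M(A)/(x,y)$ has the right isomorphism type at every prime; since the $x_p$ at different primes are a priori unrelated choices, a concrete stability-under-perturbation lemma is needed. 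This is essentially the substance of \cite[Prop.~6.2]{FM} being re-proved, and until it is stated and carried out the proposal has a genuine hole.

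A secondary inaccuracy: at the cofinitely many primes where $M_p$ consists only of free modules, one cannot always take $x_p=y_p=0$, since the free rank may still drop from $M_p(A)$ to $M_p(Ab)$; at such primes $x_p$ and $y_p$ must generate free summands, and the global free-part lift must be chosen so that its image under $-\otimes_R\Frac(R)$ realises the common generic matroid. The cleanest repair is to cite \cite[Prop.~6.2]{FM} as the paper does; failing that, the approximation lemma should be written out in full.
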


\begin{proof}
By \cite[Prop.~6.2]{FM},
a matroid $M$ over~$R$ consists of
a collection of matroids $M_p$ over the localisations $R_p$ for each $p\in\maxSpec R$
so that the classical matroids $M_p\otimes_{R_p}\Frac(R)$ all agree.
This says that given~$M$,
for each $i$ and~$A$, the vector $t_i(A)$ has either all finite or all infinite entries.
Therefore $\pi_p(P(M)) = P(M_p)$,
as we are not in the situation where some vertex of $\pi_p(P)$ which should be present is discarded
because $t_i(M_q(A))=\infty$ for some prime $q\neq p$.
The remainder of the content of the preposition is
an elementary translation of conditions (i) through~(iv) of Theorem~\ref{thm:polyhedron} to the present setting.
\end{proof}

If $R$ is a general Dedekind domain,
then \cite[Prop.~6.2]{FM} imposes conditions on the determinants of the modules $M(A)$,
an invariant valued in the Picard group of~$R$.
For a UFD the Picard group is trivial, so this is of no concern.
But when the Picard group is nontrivial, we would have to impose a simultaneous condition
on all the vertices of~$P$ with first block of coordinates $e_A$.
We leave investigation of this case to future work.

\begin{example}\label{ex:two primes}
The polyhedron $P(M)$ of Proposition~\ref{prop:Dedekind poly}
can have edges in non-matroidal directions.
Let $M$ be the realisable matroid on $\{1,2,3,4\}$ over $\mathbb Z$
with $M(\emptyset)=\Z^2$, realised by the vector configuration
$v_1 = (0,1)$,
$v_2 = (1,3)$,
$v_3 = (1,2)$,
$v_4 = (1,0)$.
Two of the vertices of $P(M)$ are $((1,0,0,1),0,(0,0,\ldots))$
and $((0,1,1,0),0,(0,0,\ldots))$, since the corresponding quotients
$\Z^2/\langle v_1,v_4\rangle$ and $\Z^2/\langle v_2,v_3\rangle$ are trivial modules.
The reader may confirm that there is an edge between these two vertices:
for instance, if the coordinates be labelled
$((x_1,x_2,x_3,x_4),i,(t_2,t_3,\ldots))$,
then the linear functional $3x_1+2x_2+2x_3+x_4+4(i+t_2+t_3)$
attains its minimum value $4$ only at the two aforementioned vertices.
The direction of this edge is disallowed by Theorem~\ref{thm:polyhedron}.

This example (like any example) can be brought into finite dimension.  
Choosing the base ring to be not $\Z$ but its localisation $(\Z\setminus(2\Z\cap3\Z))^{-1}\Z$,
whose only maximal primes are $\langle 2\rangle$ and $\langle 3\rangle$,
makes $P(M)$ have dimension $4+1+2$.
\end{example}

\subsection{Operations on matroids}
The polytope $P(M)$ transforms well under standard matroid operations on~$M$.
For the purposes of the next proposition, let $\{x_a : a\in E\}$
be the standard coordinates on $\R^E$.
Recall that the \emph{generic rank} of~$M$ is the rank of the classical matroid~$M\otimes_R \Frac(R)$,
or equivalently the rank of the free part of $M(\emptyset)$.

\begin{proposition}\label{prop:P ops}\noindent
\begin{enumerate}\renewcommand{\theenumi}{\alph{enumi}}
\item If $M$ is a matroid over~$R$ on ground set~$E$, then
\[P(M\setminus a) = \pi_a(P(M)\cap\{x_a = 0\}) \quad\mbox{and}\quad
P(M/a) = \pi_a(P(M)\cap\{x_a = 1\}),\]
where $\pi_a:\R^E\times\R^2\to\R^{E\setminus a}\times\R^2$
omits the $x_a$ coordinate.
\item In the same setting,
\[P(M^*) = s(P(M))\]
where $s$ is the involution on~$\R^E\times\R^2$ with
\[s(x_1,\ldots,x_n,i,y)=(1-x_1,\ldots,1-x_n,x_1+\cdots+x_n-r+i,y),\]
and $r$ is the generic rank of~$M$.
\item If $M_1$ and $M_2$ are two matroids over~$R$ on ground sets
$E_1$ and~$E_2$, then
\[P(M_1\oplus M_2) = \iota_1(P(M_2))+\iota_2(P(M_2))\]
where $\iota_i:\R^{E_i}\times\R^2\to\R^{E_1\cap E_2}\times\R^2$
are the canonical inclusions, both being the identity on the second factor.
\end{enumerate}
\end{proposition}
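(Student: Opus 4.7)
The plan is to verify each part by comparing vertex lists and recession cones. The common underlying observation is that, since every vertex of $P(M)$ lies in $\{0,1\}^E\times\N\times\R$, any point of $P(M)$ lying on a coordinate hyperplane $\{x_a=c\}$ with $c\in\{0,1\}$ is a convex combination (together with rays) of only those vertices already on that hyperplane. This keeps vertex descriptions clean under slicing, the involution~$s$, and Minkowski sum.

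For part~(a), the vertices of $P(M)\cap\{x_a=0\}$ are exactly the vertices $(e_A,i,t_i(A))$ of $P(M)$ with $a\notin A$, by the observation above; projecting them under $\pi_a$ gives the vertex list of $P(M\setminus a)$ by the definition $(M\setminus a)(A)=M(A)$ for $A\subseteq E\setminus a$. The recession direction $(\underline 0,0,1)$ lies in $\{x_a=0\}$ and survives the projection, so recession cones agree. The contraction case is identical after replacing $\{x_a=0\}$ with $\{x_a=1\}$ and using $(M/a)(A)=M(Aa)$.

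For part~(b), the key input is the duality formula \eqref{eq:dual}, $t_i(M(A))=t_{|A|-r+i}(M^*(E\setminus A))$. Applied to a vertex $(e_A,i,t_i(M(A)))$ of $P(M)$, the map $s$ produces $(e_{E\setminus A},|A|-r+i,t_i(M(A)))$, and setting $A'=E\setminus A$ and $j=|A|-r+i$ shows the image is $(e_{A'},j,t_j(M^*(A')))$, a vertex of $P(M^*)$. The affine map $s$ fixes both recession generators $(\underline 0,1,0)$ and $(\underline 0,0,1)$ (one checks these coordinate-wise), so the recession cones agree and $s(P(M))=P(M^*)$. A symmetric argument using $s$ applied to $P(M^*)$ (with $r$ replaced by $n-r$) gives the reverse inclusion.

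For part~(c), the essential ingredient is a direct-sum formula for the $t_i$ invariants:
\[
t_i(N_1\oplus N_2)=\min\{t_{i_1}(N_1)+t_{i_2}(N_2):i_1+i_2=i,\ i_1,i_2\geq 0\}.
\]
This is proved by writing each $N_j$ as a direct sum of cyclic quotients $R/I$ as in Section~\ref{sec2}, merging the two lists of ideals into a single weakly increasing chain, and observing (as in the discussion of the $t_i$ there) that the minimum length after quotienting by $i$ elements is attained by killing the generators of the top $i$ cyclic summands of the merged list; these split as some top $i_1$ of~$N_1$ and top $i_2$ of~$N_2$. Since $(M_1\oplus M_2)(A_1\cup A_2)=M_1(A_1)\oplus M_2(A_2)$, this formula says exactly that the minimum last coordinate at a fixed $(e_{A_1\cup A_2},i)$ in $P(M_1\oplus M_2)$ equals the minimum last coordinate at that point in the Minkowski sum $\iota_1(P(M_1))+\iota_2(P(M_2))$. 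Matching recession cones is immediate from $\iota_1(\underline 0,0,1)=\iota_2(\underline 0,0,1)$ and similarly for $(\underline 0,1,0)$, so the two polyhedra coincide. The main obstacle is part~(c), where one must justify the direct-sum formula for $t_i$; the other two parts are essentially bookkeeping from the vertex description of $P(M)$ and the established duality identity.
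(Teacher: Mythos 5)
Your proof is correct and follows the same route as the paper's: all three parts reduce to how the invariants $t_i(A)$ transform under deletion/contraction, duality via equation~\eqref{eq:dual}, and direct sum via the formula $t_i(N\oplus N')=\min_{j+j'=i}t_j(N)+t_{j'}(N')$, which both you and the paper justify by the structure theory of finitely presented modules over a valuation ring (pooling cyclic summands). The only difference is presentational: you spell out the vertex/recession-cone bookkeeping and the merged-chain argument for the direct-sum formula in somewhat more detail than the paper, which simply asserts that the generic quotient kills generators of cyclic summands that can be chosen to lie in one of the two factors.
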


\begin{proof}
These are all immediate consequences of the transformations
affected by these matroid operations on the invariants $t_i(A)$.
For (a) this is no transformation at all, only
tracking the necessary map from subsets of $E\setminus\{a\}$ to those of~$E$.
For (b), the transformation is equation~\eqref{eq:dual}.
For (c) the remaining ingredient is the observation that if $N$ and $N'$ are $R$-modules, then
\[t_i(N\oplus N') = \min_{j+j'=i} t_j(N) + t_{j'}(N').\]
This holds because a generic quotient of~$N\oplus N'$ by $i$ elements
is a quotient by generators of some of its cyclic summands,
and the decomposition into cyclic summands can be chosen so that each one
is a summand of $N$ or of~$N'$.
\end{proof}

Two matroids over fields appear as base changes of a matroid $M$ over~$R$,
namely $M\otimes_R \Frac(R)$, called the \emph{generic matroid} in \cite[\S2]{FM},
and $M\otimes_R R/\m$, unnamed in that work.
These matroids appear most easily in $P(M)$ in the guise not of their
matroid basis polytopes, as above, but of their \emph{spanning set polytopes},
dual to the aforementioned independent set polytopes.
If $L$ is a usual matroid, its spanning set polytope is
\[P_{\rm span}(L) = \conv\{e_S : \mbox{$S$ a spanning set of~$L$}\}\subseteq\mathbb R^E.\]
The two are closely related:
$P(L)$ is the face of $P_{\rm span}(L)$ minimising the sum of the coordinates,
while $P_{\rm span}(L)$ is recovered by the Minkowski sum
\[P_{\rm span}(L) = (P(L) + \cone\{e_i : i\in E\})\cap [0,1]^E.\]

\begin{proposition}
Let $M$ be a matroid over~$R$ on ground set~$E$.
\begin{enumerate}\renewcommand{\theenumi}{\alph{enumi}}
\item The spanning set polytope of $M\otimes_R R/\m$
is $P(M) \cap (\R^E\times\{(0,0)\})$.
\item The spanning set polytope of $M\otimes_R \Frac(R)$
is $\pi\big(P(M) \cap (\R^E\times\{0\}\times\R)\big)$.
\end{enumerate}
\end{proposition}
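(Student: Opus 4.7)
The plan is to translate the statement ``$A$ is a spanning set of $M\otimes_R K$'' into a condition on $t_0(A)$ for each relevant quotient/base change $K$, then compute directly the polyhedral operations on $P(M)$ using its Minkowski-sum presentation.

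First I would record the two translations. For part (a), $A$ is a spanning set of the classical matroid $M\otimes_R R/\m$ iff $(M\otimes R/\m)(A)=M(A)/\m M(A) = 0$; since $R$ is a valuation ring, hence local, and $M(A)$ is finitely generated, Nakayama's lemma upgrades this to $M(A) = 0$, equivalently $t_0(A) = 0$. For part (b), $A$ is a spanning set of $M\otimes_R \Frac(R)$ iff $M(A)\otimes_R\Frac(R) = 0$ iff $M(A)$ is torsion; using the decomposition $M(A) \cong \bigoplus_j R/I_j$ with principal $I_j$ recorded in Section~\ref{sec2}, torsion is equivalent to every $I_j$ being nonzero, equivalently every cyclic summand having finite length, equivalently $t_0(A) < \infty$.

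For part (a), an arbitrary point of $P(M)$ has the form
$\sum_k \lambda_k(e_{A_k}, i_k, t_{i_k}(A_k)) + \mu(\underline 0, 0, 1)$
with $\lambda_k \geq 0$, $\sum_k\lambda_k = 1$, and $\mu \geq 0$. Imposing that the second and third coordinates vanish forces $i_k = 0$ (since $i_k\geq 0$), $\mu = 0$, and $t_0(A_k) = 0$ (since $t_0(A_k)\geq 0$) whenever $\lambda_k > 0$. Hence
\[P(M)\cap(\R^E\times\{(0,0)\}) = \conv\{(e_A,0,0) : t_0(A) = 0\},\]
which by the first paragraph is the spanning set polytope of $M\otimes R/\m$, after the evident identification of $\R^E\times\{(0,0)\}$ with $\R^E$.

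For part (b), the same accounting applied only to the second coordinate yields $i_k=0$ for every $\lambda_k>0$ but leaves $\mu$ unconstrained, giving
\[P(M)\cap(\R^E\times\{0\}\times\R) = \conv\{(e_A,0,t_0(A)) : t_0(A) < \infty\} + \cone\{(\underline 0,0,1)\};\]
the restriction $t_0(A) < \infty$ is enforced by the convention (from the definition of $P(M)$) that vertices with an infinite coordinate are excluded from the convex hull. Applying $\pi$ drops the third coordinate, absorbs the ray into the origin, and produces $\conv\{(e_A,0) : t_0(A) < \infty\}$, which by the first paragraph is the spanning set polytope of $M\otimes \Frac(R)$. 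The only feature requiring care in the proof is the role of the $\infty$ values, which is precisely what removes the non-spanning sets in (b); in (a) they are never at issue since the stricter condition $t_0(A) = 0$ is imposed.
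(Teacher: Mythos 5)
Your proof is correct and takes essentially the same approach as the paper's, identifying spanning sets via $t_0(A)=0$ and $t_0(A)<\infty$ respectively. You fill in two steps the paper leaves implicit (the appeal to Nakayama's lemma and the explicit convex-combination computation showing the intersection/projection equals the claimed convex hull), but the underlying argument is the same.
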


Here $\pi$ is again, as in Theorem~\ref{thm:polyhedron},
the projection of $\R^E\times\R\times\R$ onto its first two factors.

\begin{proof}
If $L$ is a usual matroid viewed as a matroid over a field,
then the spanning sets of~$L$ are the sets mapped to the zero module.

The tensor product of an $R$-module $N$ with~$R/\m$ is zero
if and only if $N$ itself is zero, or equivalently $t_0(N)=0$.
So the spanning sets of $M\otimes_R R/\m$ are those contributing
vertices $(e_A,0,0)$ to~$P(M)$.

The tensor product of $N$ with~$\Frac(R)$ is zero
if and only if $N$ has no free cyclic summand, or equivalently $t_0(N)<\infty$.
Therefore, the spanning sets of $M\otimes_R \Frac(R)$ are those
contributing any vertex of form $(e_A,0,y)$ to~$P(M)$,
as only if $t_0(M(A))=\infty$ will no such vertex appear.
\end{proof}

\subsection{Faces}

We proceed to discuss the faces of $P(M)$.
If $F$ is a face of $P(M)$, let $\rec(F)$ be its recession cone.
We will classify the faces of~$P(M)$  according to the value
of $\rec(F)$.  There are four possibilities, namely the faces of
$$\rec(P(M))=\cone\{(\underline 0,1,0),(\underline 0,0,1)\}.$$

\begin{proposition}\mbox{}
\begin{itemize}
\item[(a)] The faces $F$ with $\rec(F)=\cone\{(0,1,0),(0,0,1)\}$
are exactly intersections of $P(M)$ with collections of hyperplanes of form $\{x_a=0\}$ or~$\{x_a=1\}$.
That is, the set of such faces is, up to translation,
the set of $P(N)$ for $N$ a minor of~$M$.

\item[(b)] The faces $F$ with $\rec(F)=\cone\{(0,1,0)\}$ are
the intersections of the faces in~(a) with $\{t=0\}$.

\item[(c)] The faces $F$ with $\rec(F)=\cone\{(0,0,1)\}$, when projected
to the $x$ coordinates, form the regular subdivision of the cube induced
by the corank function of the generic matroid.

\item[(d)] Among the faces $F$ with $\rec(F)=\{0\}$, i.e.\ the bounded faces,
each \emph{facet}, when projected to
the $x$ coordinates, yields a facet of the regular subdivision
of the cube induced by
\[S\mapsto \length(M(S)\otimes R/I)\]
for $I$ a nonzero proper finitely generated ideal of~$R$.
The ideal $I$ can be read off the slope of the intersection of
the defining hyperplane with the 2-space $(0,*,*)$.
The previous two cases can be seen as the limiting cases $I=R$, respectively $I=0$.
\end{itemize}
\end{proposition}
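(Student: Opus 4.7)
The plan is to perform a case analysis based on $\rec(F)$, which must be a face of $\rec(P(M)) = \cone\{(\underline 0, 1, 0), (\underline 0, 0, 1)\}$. Any face $F$ arises from minimising some linear functional $\phi(x, i, y) = c \cdot x + b i + d y$ on $P(M)$, and $\rec(F)$ is the subcone of $\rec(P(M))$ on which $\phi$ vanishes. The four faces of $\rec(P(M))$ thus correspond exactly to the four sign patterns: (a) $b = d = 0$; (b) $b = 0$, $d > 0$; (c) $b > 0$, $d = 0$; (d) $b, d > 0$. A key observation used throughout is that the slice of $P(M)$ at fixed $x = e_A$ is, in the $y$-coordinate, the epigraph of the piecewise-linear convex function interpolating the points $(j, t_j(A))$ for $j \in \N$ (convexity being axiom~(T0) of Proposition~\ref{prop:t}), translated upward by $\cone\{(\underline 0, 0, 1)\}$.

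Cases (a) and (b) are then direct. In (a), the functional is $\phi = c \cdot x$; since all vertices of $P(M)$ have $x \in \{0, 1\}^E$, its minimum picks out $x_a = 0$ when $c_a > 0$ and $x_a = 1$ when $c_a < 0$, so $F$ is the intersection of $P(M)$ with a collection of hyperplanes of the form $\{x_a = 0\}$ and $\{x_a = 1\}$. Iterating Proposition~\ref{prop:P ops}(a) identifies this intersection, up to translation, with $P(N)$ for the corresponding minor $N$ of $M$. In (b), since $d > 0$ and $t_j(A) \to 0$ as $j \to \infty$ by axiom~(TS), the column minimum of $dy$ above any $e_A$ is $0$; hence $F$ lies in $\{y = 0\}$, and $F$ is equal to the corresponding (a)-face intersected with $\{y = 0\}$.

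In case (c), the functional $\phi = c \cdot x + b i$ with $b > 0$ minimises each column above $e_A$ at the smallest $i$ for which $t_i(A) < \infty$. By the structure theorem for finitely presented modules over the valuation ring $R$ (Section~\ref{sec2}), this is exactly the rank of the free part of $M(A)$, i.e.\ $\cork(A)$ in the generic matroid $M \otimes_R \Frac(R)$. The overall minimum is $\min_A c \cdot e_A + b \cdot \cork(A)$, and the $x$-projection of $F$ is $\conv\{e_A : A \text{ minimising}\}$; letting $(c, b)$ vary with $b > 0$ produces all the cells of the regular subdivision of $[0, 1]^E$ with heights $\cork$.

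Case (d), where $b, d > 0$, contains the main technical computation. Using the structure theorem, decompose $M(A) \cong \bigoplus_k R/(r_k)$ with $\ell_k = \val(r_k)$ weakly decreasing, so that $t_j(A) = \sum_{k > j} \ell_k$. Setting $\ell = b/d$, the convex function $j \mapsto b j + d\, t_j(A)$ has successive increments $b - d \ell_{j+1}$, whence its minimum over $j \geq 0$ is attained at $j^* = d_\ell(M(A)) = \#\{k : \ell_k \geq \ell\}$, with value $d \cdot d_{\leq \ell}(M(A)) = d \cdot \length(M(A) \otimes R/I_\ell)$, using the identity $d_{\leq \ell}(N) = \sum_k \min(\ell_k, \ell)$. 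Hence the $x$-projection of the bounded face $F$ is a cell of the regular subdivision of $[0, 1]^E$ with heights $\length(M(A) \otimes R/I)$ for $I = I_\ell$, which is a facet of the subdivision when $F$ is a facet of $P(M)$. The hyperplane $\phi = \mathrm{const}$ restricted to $\{x = \underline 0\}$ reads $b i + d y = \mathrm{const}$, so its slope in the $(i, y)$-plane is $-b/d = -\ell$, recovering $I = I_\ell$. The principal obstacle is the column-minimum computation in (d): matching the $t_j$-description of $P(M)$ with the $\length(M(\cdot) \otimes R/I)$-description of the target subdivision via the structure theorem for modules over a valuation ring.
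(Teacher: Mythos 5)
Your proof is correct and follows essentially the paper's own strategy: classify faces by which face of $\rec(P(M))$ they have as recession cone (equivalently, by the sign pattern $(b,d)$ of the exposing functional), then reduce each case to a statement about a subdivision of the cube via column-minima over the slices at $x=e_A$. Your treatment of (d) is more explicit than the paper's single-sentence argument, correctly carrying out the identity $\min_j(bj+d\,t_j(A)) = d\cdot d_{\leq b/d}(M(A)) = d\cdot\length(M(A)\otimes R/I_{b/d})$ via the structure theorem for finitely presented modules over $R$, but the underlying idea is the same.
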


Proposition~\ref{prop:corank subdiv} below gives more detail on
the structure of the regular subdivision in~(c).  Its facets include
the independent set polytope ($S$ being the set of all loops)
and the spanning set polytope ($S$ being the set of all non-coloops).

We remark that a complete description of the faces of types (c) or~(d) seems to be far from obvious.
Indeed, a characterisation of these faces with good enough control to count them
is not to be expected, as this would provide an answer to
Speyer's $f$-vector conjecture \cite{Speyer}, which has proven to be hard.

\begin{proof}
Each face $F$ of type~(a) is determined by its image $\pi_E(F)$, where $\pi_E:\R^E\times\R^2\to\R^E$ is the projection onto the first factor.
Moreover, $\pi_E(F)$ is a face of $\pi_E(P)$.
In fact let $F'$ be the (smallest) face of $\pi_E(P)$ containing $\pi_E(F)$.
The codimension of $F$ in~$P$ equals the codimension of $\pi_E(F)$ in~$\pi_E(P)$,
so that $\pi(F)$ and~$F'$ have the same dimension and hence coincide.
The projection $\pi_E(P)$ is the unit cube $[0,1]^E$,
so $\pi_E(F)$ is the intersection of $\pi_E(P)$ with hyperplanes of equation $\{x_d=0\}, d\in D$
and $\{x_c=1\}, c\in C$ for some $D, C \subseteq E$.
Thus by the previous proposition, $F$ is (a translate of) the polytope of the minor in which the elements of $D$ have been deleted and the elements of $C$ have been contracted.

The same reasoning holds for faces $F$ of type~(b), after intersecting $P$ with the hyperplane of equation $\{t=0\}$ in which these faces are contained.

If $F$ is a face of type~(c), reasoning as before we see that $\pi(F)$ is a bounded face of $\pi(P)$. Since the corank of a set $A$ in the generic matroid is equal to the minimum of the set
$\{i \mid (e_A, i)\in\pi(P)\}$, we have that $\pi(P)$
is the regular subdivision of the cube induced by the corank function of the generic matroid.

Finally let $F$ be a face of type~(d). Let us assume that $F$ is a facet, i.e.\ the intersection of $P$ with an hyperplane $H$. The intersection of $H$ with the plane of equation $\{\underline x= \underline 0 \}$ and coordinates $(i,t)$ is a line of equation $t=-v i+c$, where $c$ is the length of the module $M(A)\otimes R / \val^{-1}([v, +\infty])$.
\end{proof}

\begin{proposition}\label{prop:corank subdiv}
Let $M$ be a (usual) matroid on ground set~$E$ with corank function $\cork$.
The facets of the regular subdivision of $[0,1]^E$ induced by $\conv\{(e_A, \cork A) : A\subseteq E\}$ are
\[\conv\{e_{S\cup A\setminus B} : A\subseteq E\setminus S, B\subseteq S,
\cork(S\cup A\setminus B) = \cork(S)-|A|\}\]
as $S\subseteq E$ ranges over the cyclic flats of~$M$.
\end{proposition}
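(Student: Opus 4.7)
The plan is to identify the regular subdivision of $[0,1]^E$ with the one induced by the convex (lower) envelope $\tilde g:[0,1]^E\to\R$ of the function $A\mapsto\cork(A)$ evaluated at the cube vertices, and then to realise $\tilde g$ explicitly as a pointwise maximum of affine functionals indexed by the cyclic flats. For each cyclic flat $S$ of~$M$, introduce the affine functional
\[\ell_S(x)=\cork(S)-\sum_{i\in E\setminus S}x_i.\]
The proposition follows once I establish: (a) $\ell_S(e_A)\leq\cork(A)$ for all $A\subseteq E$, with equality characterising the vertices listed in the statement; (b) for every $A$, some cyclic flat $S$ attains that equality; and (c) $\tilde g=\max_S\ell_S$, the maximum running over cyclic flats.

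The inequality in (a) rearranges, using $\cork=r(M)-r$, to $r(A)\leq r(S)+|A\setminus S|$, which follows from the two-step submodular chain $r(A)\leq r(A\cap S)+|A\setminus S|\leq r(S)+|A\setminus S|$. Equality forces both $r(A\cap S)=r(S)$, so $A\cap S$ spans~$S$, and $r(A)=r(A\cap S)+|A\setminus S|$, so $A\setminus S$ is independent over~$S$. Writing $A=(S\cup A')\setminus B'$ with $A'=A\setminus S\subseteq E\setminus S$ and $B'=S\setminus A\subseteq S$, these two equalities pack into the single identity $\cork((S\cup A')\setminus B')=\cork(S)-|A'|$ appearing in the proposition.

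For (b), given $A\subseteq E$, let $C$ be the set of coloops of the restriction $M|_A$ (elements of~$A$ lying in no circuit of~$M$ contained in~$A$), set $A^\circ=A\setminus C$, and take $S=\cl(A^\circ)$. I expect the three required facts to run as follows. Any coloop $c\in C$ satisfies $c\notin\cl(A\setminus c)\supseteq\cl(A^\circ)$, so no coloop lies in~$S$; hence $A\cap S=A^\circ$. The closure of a union of circuits is again a union of circuits by strong circuit elimination (given $x\in\cl(A^\circ)\setminus A^\circ$, some circuit contains $x$ together with elements of~$A^\circ\subseteq S$), so $S$ is genuinely a cyclic flat. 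Finally $r(S)=r(A^\circ)=r(A)-|C|$ and $|A\setminus S|=|C|$, so both equalities of~(a) hold at~$e_A$.

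For (c), the function $f(x):=\max_S\ell_S(x)$ is convex, being the maximum of affines; by (a) it satisfies $f\leq\cork$ at every cube corner, and by (b) it satisfies $f(e_A)=\cork(A)$ at every corner. Since $\tilde g$ is the largest convex function on $[0,1]^E$ achieving these corner values, $f=\tilde g$. The facets of the regular subdivision are then the maximal domains on which a fixed $\ell_S$ realises the maximum, and their vertices are exactly the cube corners $e_A$ with $\ell_S(e_A)=\cork(A)$, which by~(a) are the $e_{S\cup A\setminus B}$ of the statement. The chief technical hurdle is the matroidal bookkeeping in~(b)---constructing~$S$ from~$A$ and verifying both $A\cap S=A^\circ$ and the cyclic-flat property of $\cl(A^\circ)$---which rests on the characterisation of coloops of $M|_A$ as elements outside $\cl(A\setminus c)$; once this is in hand the remainder is the standard convex-envelope/supporting-functional dictionary.
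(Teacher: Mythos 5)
Your parts (a) and (b) are correct and genuinely useful, and they prove a nontrivial fraction of the statement: you verify that each $\ell_S(x)=\cork(S)-\sum_{i\in E\setminus S}x_i$ is an affine minorant of $\cork$ on the cube corners, characterise its set of tight corners in exactly the form appearing in the proposition, and show via the cyclic flat $S=\cl(A\setminus\{\text{coloops of }M|_A\})$ that every corner of the cube actually lies on the lower hull with $\ell_S$ tight there. These two points amount to: each $\ell_S$ carves out a \emph{face} of the regular subdivision, and the corresponding faces jointly cover all the cube corners.

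The gap is in step (c), and it is not a cosmetic one. From (a) you know $\ell_S\leq\tilde g$ for every cyclic flat $S$, hence $f:=\max_S\ell_S\leq\tilde g$; from (b) you know $f$ and $\tilde g$ agree at the $2^{|E|}$ corners. But a convex function lying below another convex function and agreeing with it on the cube corners need not equal it everywhere --- it can sag strictly below in the interior of a face, as $t\mapsto t^2$ sags below $t\mapsto t$ on $[0,1]$. The sentence ``since $\tilde g$ is the largest convex function achieving these corner values, $f=\tilde g$'' therefore only re-derives $f\leq\tilde g$; it does not give $\tilde g\leq f$. What is actually needed --- and what is the hard direction of the proposition --- is that \emph{every} facet-defining affine function of the lower hull of $\{(e_A,\cork A)\}$ is, when restricted to the cube, of the form $\ell_S$ for some cyclic flat $S$. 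Your construction exhibits only some faces of the subdivision, not that there are no others. The paper closes this gap by analysing the edge directions of the subdivision (via Theorem~\ref{thm:polyhedron}(v)), showing that a facet's supporting affine function has coefficients $c_a\in\{0,-1\}$, setting $S=\{a: c_a=0\}$, and then verifying that $S$ is a cyclic flat; some argument of this kind, pinning down the facet normals rather than merely producing a collection of supporting hyperplanes, is unavoidable. (Separately, you also do not argue that each cyclic flat gives a full-dimensional cell rather than a lower-dimensional face; the paper handles this by noting that for a cyclic flat $S$ the minimisers of $\cork(A)-|A\cap(E\setminus S)|$ include $e_S$ and all its cube-neighbours.)
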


\begin{proof}
We may suppose $M$ is connected.
The general result follows from the result applied to each connected component.

Let $C$ be the regular subdivision in question.
Then all vertices of~$C$ are lattice points by construction, and
every edge of~$C$ has direction vector of form $e-e'$ for $e,e'\in\{e_a : a\in E\}\cup\{0\}$.
With this paper's machinery this follows easiest from Theorem~\ref{thm:polyhedron}(v),
as there is a matroid $M'$ over any suitable valuation ring $R$ consisting entirely of
free $R$-modules of ranks given by~$\cork$, and the lifted polyhedron of~$C$
is the face of~$P(M')$ where the last coordinate is zero.

The constraints on edge directions imply that in the image $\widetilde C$ of~$C$ under the map
$\R^E\to\R^E\times\R$, $x\mapsto(x,-\sum_{a\in E}x_a)$,
all faces are generalised permutahedra.
Let $F$ be a facet of~$C$.
Then $F$ contains a full dimensional unimodular (lattice) simplex.
This can be constructed as follows:
the image $\widetilde F$ of $F$ in~$\widetilde C$ contains a translate of a connected matroid polytope $P$
(one can be obtained by intersecting $\widetilde F$ with enough integer translates of coordinate halfspaces;
the connectedness follows from the fact that $\dim F=|E|$).
Then any vertex $v$ of~$P$, together with
the vertices giving a spanning tree for the (bipartite) exchange graph at the basis corresponding to~$v$,
gives a unimodular simplex in $\widetilde F$,
whose projection to~$F$ is the simplex sought.

Let $f$ be the affine function on~$[0,1]^E$ whose graph contains the facet
of the lifted polyhedron of~$C$ projecting to~$F$,
so that $f(e_A)\leq\cork(A)$ for all points $e_A$, with equality on and only on the vertices of~$F$.
Pick a vertex $e_T$ of~$F$.  Then we may write
\[f(e_A) = \cork(T) + \sum_{a\in A} c_a - \sum_{b\in T} c_b\]
where the constants $c_a : a\in E$ are the derivatives of~$f$ in the coordinate directions,
and are therefore independent of~$T$.
As $F$ contains a full dimensional unimodular simplex and the corank function is integer-valued,
all of the $c_a$ are integers.
For any element $a\not\in T$, we have $f(e_{Ta})\leq\cork(Ta)\leq\cork(T)$, implying $c_a\leq0$.
Dually, if $b\in T$ we have $f(e_{T\setminus b})\leq\cork(T\setminus b)\leq\cork(T)+1$, implying $c_b\geq-1$.
Being a facet, $F$ cannot be confined to any of the subspaces of form $\{x_a=0\}$ or $\{x_a=1\}$;
therefore, for any given $a\in E$, a vertex $e_T$ can be chosen to arrange either $a\in T$ or $a\not\in T$.
This implies that $c_a\in\{0,-1\}$.

Let $S = \{a\in E : c_a = 0\}$.
Then $f(e_A)$ equals $-|A\cap (E\setminus S)|$ up to a global additive constant,
so $e_A$ is a vertex of~$F$ when and only when
\[g(e_A) = \cork(A) - |A\cap(E\setminus S)|\]
is minimised.  At $A=S$, this attains the value $g(e_S) = \cork(S)$.
This is in fact the overall minimum:
with $A=S$ as a reference point, removing elements from~$A$ will not decrease corank,
and adding an element can decrease corank by at most unity,
the effect of which is outweighed by the $-|A\cap(E\setminus S)|$ term.
The set in the statement of the proposition is the set of all sets $T$ such that
\[\cork(S) = \cork(T) - |T\cap(E\setminus S)|,\]
which are the minima of $g$ and hence the vertices of~$F$.

Finally, $S$ is a cyclic flat.  If it weren't a flat,
then for some $a\not\in S$ we would have $\cork(Sa) = \cork(S)$,
so that $g$ could not then be minimised at $e_A$ for any $A\ni a$,
implying $F\subseteq\{x_a=0\}$.  The dual argument shows $S$ is cyclic.
Conversely, if $S$ is a cyclic flat, then the minima of $g$ include $e_S$
and all vertices of $[0,1]^E$ adjacent to it,
so these minima do determine a facet of~$C$.
\end{proof}

\section{Tropical linear spaces}\label{sec:TLS}

We have already invoked valuated matroids several times in our discussion
of matroids over a valuation ring~$R$.
In this section we begin to look at these valuated matroids geometrically,
as corresponding to \emph{tropical linear spaces}.

As is natural, we will take the semifield of definition of our tropical objects
to be $\pm\val R\cup\{\infty\}$.
Tropical addition is minimum (not maximum).
Thus, a tropical polynomial is the minimum of a collection
of linear forms with integer coefficients.
Such a polynomial \emph{vanishes} if the minimum is obtained
simultaneously by two or more of the terms.
We will write simply e.g.\ that ``$\min\{\ell_1,\ldots,\ell_n\}$ vanishes'',
without specifying ``tropically''.

Let $n$ be an integer.
\begin{definition}
\begin{itemize}
\item A vector $p = (p_A)$ of elements of $\pm\val R\cup\{\infty\}$
indexed by all subsets $A\subseteq E$
is a \emph{tropical flag Pl\"ucker vector} if,
for all sets $A,B\subseteq[n]$ with $|A|\geq|B|+2$,
\[\min_{a\in A\setminus B}p_{A\setminus a}+p_{B\cup a}\mbox{ vanishes},\]
and moreover, for each $0\leq r\leq n$, there is at least one
set $A$ of cardinality $r$ such that $p_A\neq\infty$.

\item If $p$ is indexed only by the subsets of~$E$ of cardinality~$r$,
satisfies these relations when $|A|-1=|B|+1=r$,
and not all entries of~$p$ are $\infty$,
we call it a \emph{tropical Pl\"ucker vector (of rank $r$)}.
\end{itemize}
Every tropical Pl\"ucker vector is a projection $p_r$
of some tropical flag Pl\"ucker vector $p$ to the coordinates indexed by size-$r$ sets.
\end{definition}
To each tropical Pl\"ucker vector $p$ on~$E$ of rank~$r$
there is associated a tropical linear space $L(p)$.
To each tropical flag Pl\"ucker vector
corresponds a \emph{full flag} of linear spaces, one of each rank,
namely the collection of $L(p_r)$ as $r$ ranges from 0 to~$n$;
these linear spaces are said to be \emph{incident}.
The tropical linear spaces $L(p)$ and $L(q)$ are equal if and only if
$p$ and $q$ differ by a global additive (i.e.\ tropically multiplicative) constant.
For our purposes this may be taken as the definition of ``tropical linear space''.

The set (which is a projective tropical prevariety)
of all tropical linear spaces on~$E$ of rank~$r$
is called the \emph{Dressian} $Dr(r,n)$, where as above $n=|E|$.
The set of complete flags of tropical linear spaces
is the flag Dressian $FD(n) = FD(1,\ldots,n-1;n)$ defined by Haque \cite{Haque}.

In the literature,
tropical linear spaces are sometimes defined with the additional condition
that $p_A\neq\infty$ for all~$A$.  Under this assumption one can get away
with fewer equations:

\begin{lemma}\label{lem:three term TLS}
If a vector $(p_A)_{|A|=r}$ with no infinite components satisfies
the \emph{three-term Pl\"ucker relations},
namely that for all sets $|A|=r-2$ and all $b,c,d,e\not\in A$,
\[\min\{p_{Abc}+p_{Ade},p_{Abd}+p_{Ace},p_{Abe}+p_{Acd}\}\mbox{ vanishes},\]
then $(p_A)$ is a tropical Pl\"ucker vector.
\end{lemma}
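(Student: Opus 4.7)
The plan is to route through the valuated matroid \emph{symmetric exchange} property: first derive it from the three-term relations, then use it to deduce the full Pl\"ucker relations. Concretely, Step~1 is to prove that for every pair of size-$r$ subsets $U,V\subseteq E$ and every $a\in U\setminus V$ there exists $b\in V\setminus U$ with
\[
p_U+p_V \;\geq\; p_{(U\setminus a)\cup b} + p_{(V\setminus b)\cup a}.
\]
This is the Dress--Wenzel axiom for valuated matroids. The base case $|U\setminus V|=2$ is an immediate three-term relation with parameter set $U\cap V\in\binom{E}{r-2}$ and free elements the two of $U\setminus V$ and the two of $V\setminus U$: either $p_U+p_V$ is one of the two minimizers, in which case the partner term of the tie yields a valid $b$, or it is strictly greater than both of the other terms, in which case either choice of $b\in V\setminus U$ works. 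For $|U\setminus V|>2$ I would induct, choosing an intermediate size-$r$ set $W$ for which $|U\triangle W|$ and $|W\triangle V|$ are both strictly smaller than $|U\triangle V|$, applying the hypothesis to $(U,W)$ and $(W,V)$, and combining the resulting inequalities.

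Step~2 is to deduce any Pl\"ucker relation from symmetric exchange. Fix $A$ with $|A|=r+1$ and $B$ with $|B|=r-1$, and let $a^*\in A\setminus B$ achieve the minimum
\[
\min_{a\in A\setminus B}\bigl(p_{A\setminus a}+p_{B\cup a}\bigr).
\]
Apply Step~1 to the size-$r$ sets $U:=B\cup\{a^*\}$ and $V:=A\setminus\{a^*\}$ with the distinguished element $a^*\in U\setminus V$. The partner $b$ produced lies in $V\setminus U=(A\setminus B)\setminus\{a^*\}$, and the exchange inequality unwinds to
\[
p_{B\cup a^*}+p_{A\setminus a^*}\;\geq\; p_{B\cup b}+p_{A\setminus b}.
\]
Combined with the minimality of $a^*$, equality must hold, so $b\neq a^*$ is a second minimizer of the Pl\"ucker sum, and the tropical vanishing is established. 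The degenerate cases $|A\setminus B|=2$ and $r\in\{0,1,n-1,n\}$ need to be checked separately but are immediate, since in each the Pl\"ucker sum has only one distinct term.

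The principal obstacle is the chaining in Step~1. Because the tropical semifield admits no subtraction, I cannot simply cancel the shared $W$-terms when combining the $(U,W)$ and $(W,V)$ inequalities; instead the argument must match up minimizers on~$W$ and proceed by ordinary real-arithmetic manipulation of the finite values $p_\bullet$. This is where the ``no infinite components'' hypothesis earns its keep: it forces every intermediate $p_\bullet$ to be a genuine real number, so one never hits the indeterminate $\infty-\infty$ while chaining. Once Step~1 is in place, Step~2 is essentially immediate.
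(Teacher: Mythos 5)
Your overall route is genuinely different from the paper's. The paper cites Speyer's Proposition~2.2 together with Theorem~\ref{thm:matroid polytope}: Speyer shows that the three-term relations alone force the lifted polyhedron to have only matroidal edge directions, and the converse direction of the polyhedral characterisation then gives the full Pl\"ucker relations. You instead attempt a direct combinatorial derivation via the Dress--Wenzel symmetric exchange axiom. That is a legitimate alternative strategy in principle, and your Step~2, the degenerate cases, and the base case of Step~1 are all correct as written.

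However, the inductive step of Step~1 is a genuine gap, and you have in effect flagged it yourself without closing it. Passing from local exchange ($|U\triangle W|=4$, which is your base case) to global exchange (arbitrary $|U\triangle V|$) for valuated matroids is a nontrivial theorem --- it is proved, for example, by Murota in the theory of M-convex functions, and is implicitly what Speyer's polyhedral argument accomplishes. The obstacle is not merely that $\infty-\infty$ is indeterminate; even with every $p_\bullet$ a finite real, adding the inequalities from $(U,W)$ and $(W,V)$ leaves an extra $2p_W$ on the left and unrelated terms on the right, and there is no general way to cancel. Writing $U'=U\setminus a\cup b_1$, $W'=W\setminus b_1\cup a$ from the first exchange and then exchanging $a$ between $W'$ and $V$ to get $b_2$ only recovers the desired inequality when $b_1=b_2$, which need not happen; choosing $W=U\setminus a\cup d$ or $W=V\setminus c\cup a$ degenerates the local exchange into a tautology. ``Match up minimizers on $W$ and proceed by real arithmetic'' is not a proof; the actual argument must be something more delicate, such as a minimal-counterexample argument invoking a carefully chosen second three-term relation. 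Until that step is supplied, the proposal does not establish the lemma.

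Two smaller remarks. First, the role of the ``no infinite components'' hypothesis is not primarily to dodge $\infty-\infty$: it guarantees that the support of $(p_A)$ is the full uniform matroid, so one does not need to verify separately (as the definition of valuated matroid requires) that the support is a matroid --- a condition the three-term relations alone do not enforce. Second, since your $U=B\cup\{a^*\}$ and $V=A\setminus\{a^*\}$ satisfy $|U\triangle V|=2(|A\setminus B|-1)$, your Step~2 reduction shows that the general Pl\"ucker relation for $|A\setminus B|=k$ corresponds exactly to exchange at distance $2(k-1)$; the three-term case $k=3$ is precisely local exchange at distance~$4$. So the missing inductive step is not a side issue but the entire content of the lemma.
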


\begin{proof}
The definition of tropical linear space given by Speyer in~\cite{Speyer}
uses only the three-term Pl\"ucker relations.
The equivalence of this definition and our own
is proven by Speyer's Proposition~2.2 and our Theorem~\ref{thm:matroid polytope}.
\end{proof}

\begin{lemma}\label{lem:three term incidence}
Let $p$ and $q$ be tropical Pl\"ucker vectors of respective ranks
$r$ and~$r+1$ on ground set~$E$.
If $p$ and $q$ have no infinite entries and satisfy the three-term incidence relations,
namely that for all sets $|A|=r-1$ and all $b,c,d\not\in A$
\[\min\{p_{Abc}+p_{Ad},p_{Abd}+p_{Ac},p_{Acd}+p_{Ab}\}\mbox{ vanishes},\]
then $p$ and $q$ are incident.
\end{lemma}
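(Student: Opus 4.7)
The goal is to establish the general incidence Plücker relations for $p$ and $q$: for all $A, B \subseteq E$ with $|A|=r+2$ and $|B|=r-1$,
\[
\min_{a \in A \setminus B}\bigl(q_{A \setminus a} + p_{B \cup a}\bigr)\text{ vanishes}.
\]
The three-term hypothesis is exactly the base case $B \subset A$ (i.e., $|A \setminus B|=3$), and by Lemma~\ref{lem:three term TLS} the vectors $p$ and $q$ individually satisfy all multi-term Plücker relations on their own.

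My preferred approach is polyhedral, in direct analogy with the proofs of Theorem~\ref{thm:matroid polytope}(b) and Theorem~\ref{thm:polyhedron}. I would form the two-level lifted polytope
\[
P = \conv\bigl(\{(e_A,0,p_A):|A|=r\}\cup\{(e_B,1,q_B):|B|=r+1\}\bigr) + \cone\{(\underline 0,0,1)\}\subseteq\R^E\times\R^2,
\]
and prove, as the central step, that every edge of $P$ has direction of one of the allowed forms: $(e_i-e_j,0,\ast)$ for edges within a single rank level, and $(e_b,1,\ast)$ for edges crossing between levels. Intra-level edges are handled by Theorem~\ref{thm:matroid polytope}(b) applied to each level's slice, using the three-term Plücker relations for $p$ and $q$ separately. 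Inter-level edges must go from an $r$-set vertex to an $(r{+}1)$-set vertex, and the three-term incidence hypothesis is precisely what rules out inter-level edge directions other than $(e_b,1,\ast)$.

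Once the edge description is in hand, higher-term incidence relations follow by a supporting-hyperplane argument as in the proof of Theorem~\ref{thm:polyhedron}: given any $A, B$ with $|A \setminus B| \geq 4$, three judiciously chosen points of $P$ lie in a common proper face, and the translated fourth point is forced to have its last coordinate no larger than a prescribed linear combination of those of the first three; this inequality is exactly the multi-term incidence relation, and both sides must then be equal, so the minimum in question is attained at least twice.

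The principal obstacle is proving the edge-direction property for inter-level edges. This requires a two-level analogue of Lemma~\ref{lem:gen exch}: given two points $(e_A,0,p_A)$ and $(e_B,1,q_B)$ of $P$ with $A \not\subset B$, one must produce another pair of lattice points of $P$ with the same midpoint in the first two blocks of coordinates whose last coordinates sum to at most $p_A + q_B$. In the paper Lemma~\ref{lem:gen exch} is derived via the underlying module structure of a matroid over $R$; here one has only the three-term Plücker and incidence relations available, so the exchange must be argued combinatorially, presumably by reducing carefully chosen exchanges back to the three-term incidence hypothesis. Verifying this reduction cleanly is the crux of the argument.
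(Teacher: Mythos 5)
Your polyhedral setup is in fact identical in spirit to what the paper does, but you stop short of closing the argument: you correctly identify the edge-direction property for the two-level polytope $P$ as the crux, then leave it as an open problem requiring a ``two-level analogue of Lemma~\ref{lem:gen exch}.'' That gap is real — without proving the edge property you do not obtain the multi-term incidence relations.

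The observation that closes it, and which the paper uses, is that $p$ and $q$ together are nothing other than a single tropical Pl\"ucker vector $f$ of rank $r+1$ on an augmented ground set $E^\ast = E \cup \{\ast\}$, defined by $f_A = p_{A\setminus\ast}$ if $\ast\in A$ and $f_A = q_A$ otherwise. Your polytope $P$ is (up to swapping the roles of $0$ and $1$ in the auxiliary coordinate) exactly the lifted polyhedron of $f$ from Theorem~\ref{thm:matroid polytope}(b), with $e_\ast$ playing the part of your second coordinate block. The three-term Pl\"ucker relations for $f$ split into three kinds: those whose index sets all contain $\ast$ (the relations for $p$), those whose index sets all omit $\ast$ (the relations for $q$), and the mixed ones, which are precisely the three-term incidence relations you assumed. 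So Lemma~\ref{lem:three term TLS} applies to $f$ directly and already gives that every edge of $P(f)$ has direction $e_i-e_j$ with $i,j\in E^\ast$; the ``inter-level'' directions you worry about are simply $e_b - e_\ast$. There is no new exchange lemma to prove. The general incidence relations for $(p,q)$ are then just the general Pl\"ucker relations for $f$ read off with $\ast$ in one side of the index pair, and you are done. So your route is recoverable, but only after noticing that your ``two-level'' object is really a one-level object on $E^\ast$, at which point the heavy lifting is already in the machinery you cited.
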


\begin{proof}
Define a new vector $f$ on an augmented ground set $E^\ast = E\cup\{\ast\}$,
indexed by size $r+1$ subsets of~$E^\ast$, so that
\[f_A = \begin{cases}
p_{A\setminus\ast} & \ast\in A \\
q_A & \ast\not\in A.
\end{cases}\]
The three-term Pl\"ucker relations for $f$ are either three-term Pl\"ucker relations
for one of $p$ or~$q$, or are three-term incidence relations between the two.
Since these relations are all satisfied, Speyer's characterisation
implies that $f$ is a tropical linear space.
The general incidence relations between $p$ and~$q$ are general Pl\"ucker relations on~$f$,
completing the proof.
\end{proof}

For $M$ a matroid over~$R$ on ground set~$E$, $|E|=n$,
and $r,i$ a pairs of nonnegative integers such that $r+i$ exceeds the generic rank of~$M$,
let $t_{r,i} = t_{r,i}(M)$ be the vector indexed by size $r$ subsets of~$E$
given by $(t_{r,i})_A = t_i(M(A))$.
The restriction on~$r+i$ ensures that not all the $t_i(M(A))$ are infinite.

\begin{lemma}\label{lem:closure of finite}
With setup as above,
each vector $t_{r,i}$ is a tropical Pl\"ucker vector of rank~$r$.
In fact,
\[L(t_{0,i_0}) \subseteq L(t_{1,i_1})\subseteq\cdots\subseteq L(t_{n,i_n})\]
is a full flag of tropical linear spaces whenever
$i_0, \ldots, i_n$ are indices of with $i_r-i_{r+1}\in\{0,1\}$ for each~$r$.
\end{lemma}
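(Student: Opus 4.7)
The plan is to extract the Pl\"ucker and incidence relations directly from axioms (D4), (D3), and (D3$'$) of Section~\ref{sec:D4}, and then invoke Lemmas~\ref{lem:three term TLS} and~\ref{lem:three term incidence}.

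For the first assertion, specialising axiom (D4) to $|A|=r-2$ yields the three-term Pl\"ucker relations for $t_{r,i}$. To see that its support (the set of $A$ of size $r$ with $t_i(M(A))<\infty$) is the set of bases of a matroid of rank $r$, recall that $t_i(N)<\infty$ holds precisely when $N$ has at most $i$ free cyclic summands; for $|A|=r$, this translates to the rank of $A$ in the generic matroid of $M$ being at least the generic rank of $M$ minus $i$. Standard matroid theory (or an $i$-fold application of Proposition~\ref{prop:add-generic} combined with the observation that spanning sets of a fixed size form the bases of an elongation) identifies this as the basis set of a rank-$r$ matroid. The hypothesis that $r+i$ exceeds the generic rank guarantees this support is nonempty, satisfying the finiteness requirement in the definition.

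For the incidence assertion, split according to whether $i_r-i_{r+1}$ equals $0$ or $1$. When $i_r=i_{r+1}=i$, axiom (D3) with $|A|=r-1$ reads exactly as the three-term incidence relation between $t_{r,i}$ and $t_{r+1,i}$. When $i_r=i_{r+1}+1$, axiom (D3$'$) with $i=i_{r+1}$ and $|A|=r-1$ is the three-term incidence relation between $t_{r,i_{r+1}+1}=t_{r,i_r}$ and $t_{r+1,i_{r+1}}$. In either case, Lemma~\ref{lem:three term incidence} upgrades these three-term relations to the full set of incidence relations, and hence $L(t_{r,i_r})\subseteq L(t_{r+1,i_{r+1}})$.

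The main obstacle is that Lemmas~\ref{lem:three term TLS} and~\ref{lem:three term incidence} are stated under the hypothesis that the vectors involved have no infinite entries, whereas $t_i(M(A))$ may legitimately be $\infty$. I would handle this by a density argument: replace each infinite entry by a large finite value, apply the cited lemmas to the perturbed all-finite vectors, and pass to the limit, using that the tropical Pl\"ucker and incidence relations define closed conditions in the tropical projective space. The name of the lemma, ``closure of finite,'' signals precisely this approach; alternatively, one can verify that the proofs of Lemmas~\ref{lem:three term TLS} and~\ref{lem:three term incidence} extend verbatim to vectors with infinite entries once the support has been identified as the basis set of a matroid, which we have already established.
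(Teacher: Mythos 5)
Your proposal correctly identifies the skeleton of the argument: axiom (D4) specialises to the three-term Pl\"ucker relations, (D3) and (D3$'$) specialise to the three-term incidence relations for $i_r-i_{r+1}\in\{0,1\}$, and Lemmas~\ref{lem:three term TLS} and~\ref{lem:three term incidence} upgrade these. Your interpretation of the lemma's name as signalling a closure argument is also right. But there are two gaps relative to the paper's proof, one of which is substantial.

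The substantial gap is that you only establish the \emph{consecutive} incidences $L(t_{r,i_r})\subseteq L(t_{r+1,i_{r+1}})$. The conclusion of the lemma is that the chain lies in the flag Dressian $FD(n)$, whose definition imposes the incidence relations for \emph{all} pairs $A,B$ with $|A|\geq|B|+2$, i.e.\ between all pairs of ranks, not just adjacent ones. Lemma~\ref{lem:three term incidence} by itself gives only the rank-$(r,r+1)$ incidences, and the paper does not assume that tropical incidence is transitive. Instead the paper devotes a full inductive argument to the non-consecutive case: it builds auxiliary valuated matroids $M_1$, $M_2$ on an enlarged ground set $E'=E\amalg G\amalg Z$ (appending generic and zero elements, as in Section~\ref{sec:D4}), applies Lemma~\ref{lem:three term incidence} to them, and recovers the needed incidence between $t_{r,i}$ and $t_{r',i'}$ as an incidence of minors. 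That entire step is missing from your write-up; without it you have only shown that each consecutive pair is incident, not that the chain is a point of $FD(n)$.

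The smaller gap is in the handling of infinite entries. ``Replace each infinite entry by a large finite value'' is not obviously sound: the replacements enter many three-term relations simultaneously and must be chosen consistently, and nothing in your sketch guarantees that the perturbed vectors still satisfy (D4), (D3), (D3$'$). The paper's construction has this built in: for a descending chain $(I_k)$ of ideals with trivial intersection, $M\otimes R/I_k$ is again a matroid over~$R$ whose modules are all torsion, so every $t_i(M\otimes R/I_k(A))$ is finite and all the (D) axioms hold; the sequences converge to $t_i(M(A))$ in tropical projective space, and one concludes by closedness of $FD(n)$. Your alternative suggestion, that the cited lemmas ``extend verbatim'' to vectors with infinities, is a nontrivial claim that would itself need a proof (Speyer's result is stated for all-finite Pl\"ucker vectors); the paper's route is designed precisely to avoid having to establish such an extension.
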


\begin{proof}
Property~(D4) of~$M$ is equivalent to the
three-term Pl\"ucker relations for each $t_{r,i}$, while
property~(D3) is equivalent to the assertion that
$L(t_{r,i})$ and $L(t_{r+1,i})$ satisfy the three-term incidence relations
for all valid $r$ and $i$,
and similarly (D3$'$) is equivalent to the assertion that
$L(t_{r,i})$ and $L(t_{r+1,i-1})$ satisfy the three-term incidence relations.

For each ideal $I$ of~$R$, the base change $M\otimes R/I$
can be viewed as a matroid over~$R$, via the $R$-module structure of~$R/I$.
Given a descending sequence $(I_k)$ of ideals whose intersection is zero,
$t_i(M\otimes R/I_k(A))$ is a sequence of finite numbers
which stabilises at $t_i(M(A))$ if the latter is finite,
and increases without bound otherwise.
That is, if we let $t_{r,i}^k$ be the vector with entries
$t_i(M\otimes R/I_k(A))$ for $|A|=r$,
then for any valid indices $(i_r)$ the sequence of tuples
$(t_{i,r_i}^k)$ converges to $(t_{i,r_i})$
in the product of tropical projective spaces containing them.

By Lemma~\ref{lem:three term TLS},
every $(t_{i,r_i}^k)$ is a tropical linear space,
and by Lemma~\ref{lem:three term incidence},
$(t_{i,r_i}^k)$ is always incident to $(t_{i+1,r_{i+1}}^k)$.

In fact, by inductive use of Lemma~\ref{lem:three term incidence} we get
all the incidences in our flags, as follows.
Let $(i,r)$ and $(i',r')$ be pairs of indices that appear together
in the chain of indices for some full flag,
where without loss of generality $r'>r$ (the case of inequality being vacuous),
so $0\leq i-i'\leq r'-r$ and at least one of these inequalities is strict.
We suppose the former is, the proof for the latter being symmetric under matroid duality.

Consider the enlarged ground set $E' = E\amalg G\amalg Z$ where
$|G|=(r'-r)-(i-i')$ and $|Z|=(i-i')-1$.
We construct two valuated matroids $M_1$ and $M_2$ on~$E'$,
of respective ranks $r'-1$ and $r'$.
Set $M_1(A) = t_{i+|A\cap G|}(M(A\cap E))$ when $|A|=r'-1$,
and $M_2(A)$ to be given by the same formula when $|A|=r$.
The tropical Pl\"ucker relations for $M_1$ and $M_2$ hold by induction on $r'-r$.
Each one of the three-term incidence relations between $M_1$ and $M_2$
follows from one of the nine properties, from (D4) to (D0$''$),
discussed in Section~\ref{sec:D4}.
Lemma~\ref{lem:three term incidence} then implies that $M_1$ and~$M_2$ are incident.
Among their incidence relations are the incidence relations between
$(t_{i,r}^k)$, which appears as a minor of~$M_1$, and
$(t_{i',r'}^k)$, which appears as a minor of~$M_2$.
And the next case of the inductive hypothesis holds:
the incidence relations between $M_1$ and~$M_2$
together with the Pl\"ucker relations on $M_1$ and~$M_2$
make up the Pl\"ucker relations for the new
valuated matroid to be constructed next, with $M_1$ as deletion
and~$M_2$ as contraction by the new element added to~$Z$.

Lastly, since $FD(n)$ is a tropical prevariety, it is closed,
so we can conclude that because the flags formed by the $(t_{i,r_i}^k)$
lie in $FD(n)$ for all~$k$, the flags formed by their limits $(t_{i,r_i})$ do as well.
\end{proof}

\begin{lemma}\label{lem:incidence implies pluecker}
Let $p$ be a tropical Pl\"ucker vector on $[n]$ of rank~$r-1$,
and $q$ a vector indexed by size $r$ subsets of $[n]$
such that $p$ and $q$ satisfy the tropical incidence relations.
Then $q$ is also a tropical Pl\"ucker vector.
\end{lemma}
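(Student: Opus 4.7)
The plan is to reduce, via Lemma~\ref{lem:three term TLS}, to verifying the three-term Pl\"ucker relations for~$q$, and then to derive these from the three-term incidence relations between $p$ and~$q$.

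First I would reduce to the case that $p$ and $q$ have no infinite entries. This can be done by approximating them by a sequence $(p^{(k)},q^{(k)})$ of finite-valued tropical vectors for which the hypotheses (Pl\"ucker for $p^{(k)}$ and incidence between $p^{(k)}$ and $q^{(k)}$) continue to hold, and then taking the limit using the closedness of the Dressian and the flag Dressian in tropical projective space, in the spirit of the argument at the end of the proof of Lemma~\ref{lem:closure of finite}. With all entries finite, Lemma~\ref{lem:three term TLS} applies, and it suffices to show that for each $A\subseteq[n]$ with $|A|=r-2$ and each $b,c,d,e\notin A$, the quantity
\[\min\{q_{Abc}+q_{Ade},\ q_{Abd}+q_{Ace},\ q_{Abe}+q_{Acd}\}\]
is attained at least twice.

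Apply the three-term incidence relation on~$A$ to each of the four $3$-subsets of $\{b,c,d,e\}$. This yields four vanishing relations (call them (i)--(iv)), each a minimum over three monomials of the form $q_{Axy}+p_{Az}$. The classical identity
\[p_{Ad}\cdot(q_{Abc}q_{Ade}-q_{Abd}q_{Ace}+q_{Abe}q_{Acd})=0\]
holds in the polynomial ring modulo (i), (iii) and (iv), and gives the scaffolding for the tropical argument. The key combinatorial observation is that the tropical product of any two of (i)--(iv) has nine monomials, two of which coincide with a distinct pair of the three Pl\"ucker binomials $q_{Axy}+q_{Azw}$, each with the same $p$-weight. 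For example, the tropical product of the incidence relations on $\{A,b,c,e\}$ and $\{A,b,d,e\}$ contains both $q_{Abc}+q_{Ade}$ and $q_{Abd}+q_{Ace}$ with common coefficient $p_{Ab}+p_{Ae}$; the six products of pairs from (i)--(iv) cover all three pairs of Pl\"ucker binomials. A case analysis based on which terms of each incidence attain their mins will then force two among $\{q_{Abc}+q_{Ade},\ q_{Abd}+q_{Ace},\ q_{Abe}+q_{Acd}\}$ to coincide at the overall minimum.

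The main obstacle is precisely this case analysis. The tropical product of two vanishing polynomials is again vanishing, but with the minimum now attained at at least four cells; one must argue that among these cells there are always two which correspond to distinct Pl\"ucker binomials, rather than to ``off-diagonal'' pairings. Ruling out the misaligned configurations will require using all four incidence relations simultaneously, and in borderline cases invoking the three-term Pl\"ucker relations for~$p$ on subsets of $A\cup\{b,c,d,e\}\setminus\{x\}$ for $x\in A$. Here the finite-entry reduction is essential, since it lets the $p$-coefficients be treated as tropical units that can be cancelled, mimicking the classical division by $p_{Ad}$ needed to extract the Pl\"ucker relation for~$q$.
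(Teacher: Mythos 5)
Your approach is genuinely different from the paper's, which instead gives a short polyhedral argument: it applies Theorem~\ref{thm:matroid polytope}(b) to the lifted polyhedron $P(p,q)\subseteq\R^E\times\R$ formed by all points $(e_A,p_A)$ and $(e_A,q_A)$, shows that a bad edge of $P(q)$ would propagate to a two-dimensional face of $P(p,q)$ with a single $p$-vertex $(e_C,p_C)$, and then reads off a violated incidence relation from the edge $\conv\{(e_C,p_C),(e_A,q_A)\}$. That argument never needs to strip out infinite coordinates, never does a case analysis, and never invokes the classical polynomial identity. Your algebraic route---derive the three-term tropical Pl\"ucker relation for~$q$ from tropical products of incidence relations, mimicking the classical identity $p_{Ad}(q_{Abc}q_{Ade}-q_{Abd}q_{Ace}+q_{Abe}q_{Acd})\equiv 0$---is a legitimate idea in principle, and the classical identity you cite is correct.

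However, the proposal as written has two real gaps. First, the reduction to finite entries is not just a technicality you can wave at: a valuated matroid whose support matroid is not uniform need not be a limit of full-support valuated matroids (for instance, the constant-zero valuation on a matroid with a loop in its contraction by a size-$(r-2)$ set cannot be approximated this way while staying in the Dressian), and you would additionally need the approximating $p^{(k)}$ to remain incident to finite-entry $q^{(k)}$ converging to~$q$. This would itself require a separate argument, possibly as hard as the lemma you are trying to prove. Second, and more centrally, you explicitly concede that ``the main obstacle is precisely this case analysis'' and that ruling out misaligned configurations ``will require using all four incidence relations simultaneously, and in borderline cases invoking the three-term Pl\"ucker relations for~$p$.'' That case analysis is the content of the lemma, not a mop-up step; without it you have a plan, not a proof. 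Because tropicalization does not preserve ideal membership, the classical identity does not automatically yield a vanishing relation among the tropical minima, so you cannot take the case analysis on faith.
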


Since the tropical incidence relations are self-dual,
Lemma~\ref{lem:incidence implies pluecker} also holds if $p$ is indexed by size $r+1$ subsets.

\begin{proof}
Towards the contrapositive, suppose that $q$ was not a tropical Pl\"ucker vector.
By Theorem~\ref{thm:matroid polytope}(b),
this implies that the corresponding lifted hypersimplex
\[P(q) = \conv\{(e_A,q_A) : A\in\binom{[n]}r\} + \cone\{(\underline 0,1)\}\]
has an edge $e = \conv\{(e_A,q_A),(e_B,q_B)\}$ for $A,B\in\binom{[n]}r$
for which $e_A-e_B$ is not of the form $e_i-e_j$.
Now form the lifted polyhedron corresponding to $p$ and $q$ together,
\[P(p,q) = \conv(\{(e_A,p_A) : A\in\binom{[n]}{r-1}\}\cup\{(e_A,q_A) : A\in\binom{[n]}r\})
+ \cone\{(\underline 0,1)\}.\]
Since $P(q)$ is the face of $P(p,q)$ maximising the sum of the non-last coordinates,
$e$ remains an edge of $P(p,q)$.  Some 2-dimensional face $f$ of~$P(q)$ includes $e$
and a vertex not on~$P(q)$.  The vertices of~$f$ which are not vertices of~$e$
do not lie on~$P(q)$ and are therefore of the form $(e_A,p_A)$,
making them vertices of the valuated matroid polyhedron $P(M_p)$,
where $M_p$ is the valuated matroid with valuation data $p$.
The convex hull of these vertices is $P(M_p)\cap f=:g$, whose dimension is at most~1.

If $g$ were an edge it would be
parallel to~$e$, contradicting Theorem~\ref{thm:matroid polytope}(b)
for the valuated matroid polyhedron $P(M_p)$.
So $g$ has dimension 0, i.e.\ is a single vertex $\{(e_C,p_C)\}$
for some $C\in\binom{[n]}{r-1}$.
At least one of $A$ or~$B$ must not be a superset of~$C$,
or else $e_A-e_B$ is in a direction $e_i-e_j$.
Suppose without loss of generality that $A\not\supseteq C$.
But then $p_C+q_A$ is a term of some tropical incidence relation $\mathcal R$,
and the existence of the edge
$\conv\{(e_C,p_C),(e_A,q_A)\}$ of~$P(p,q)$ contradicts $\mathcal R$.
To wit, consider the subpolyhedron $R$ of~$P(p,q)$
whose vertices are the ones corresponding to entries appearing in~$\mathcal R$.
The tropical equation $\mathcal R$ says exactly that
the midpoint $\frac12(e_C,p_C)+\frac12(e_A,q_A)$,
provided that it is on the boundary of~$R$ and in particular minimises its last coordinate,
should not lie on an edge of~$R$ but on a higher-dimensional face,
because some other pair of vertices $(e_{C'},p_{C'})$
and $(e_{A'},q_{A'})$ have the same midpoint.
\end{proof}

\begin{corollary}
The set of $r$-dimensional tropical linear spaces containing a given $(r-1)$-dimensional
space, as a subset of the Dressian, is tropically convex.
\end{corollary}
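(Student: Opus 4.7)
The plan is to exploit Lemma~\ref{lem:incidence implies pluecker} to rewrite ``contains $L(p)$'' as a system of tropical linear equations in $q$ with $p$ as parameter, and then check tropical convexity of each equation directly.

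Fix a tropical Pl\"ucker vector $p$ of rank $r-1$ representing the given $(r-1)$-dimensional space, and let $S_p$ denote the set of rank-$r$ tropical Pl\"ucker vectors $q$ (up to tropical scaling) with $L(p)\subseteq L(q)$. By Lemma~\ref{lem:incidence implies pluecker}, $S_p$ consists exactly of those vectors with at least one finite entry which satisfy the tropical incidence relations with $p$; in particular, no separate verification of Pl\"ucker relations on $q$ is needed. Each such incidence relation, with $p$ fixed, has the form
\[\min_i(p_{A_i}+q_{B_i})\text{ is attained at least twice,}\]
that is, it is the vanishing locus of a tropical linear form in the entries of~$q$ whose coefficients come from~$p$.

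It then suffices to show that each such vanishing locus is tropically convex. Given $q^{(1)},q^{(2)}$ in the locus and $\lambda_1,\lambda_2\in\R\cup\{\infty\}$ not both $\infty$, I would set $q=\min(\lambda_1+q^{(1)},\lambda_2+q^{(2)})$ coordinatewise, and write $m_k=\min_i(p_{A_i}+q^{(k)}_{B_i})$, which by hypothesis is attained at least twice for each~$k$. The computation
\[\min_i(p_{A_i}+q_{B_i})=\min_k(\lambda_k+m_k)\]
then shows that the double attainment propagates: the outer minimum inherits two attainments from whichever summand $\lambda_k+m_k$ is smaller, or from both when they tie. Intersecting over all incidence relations gives tropical convexity of~$S_p$, with the non-degeneracy condition clearly preserved so long as some $\lambda_k$ is finite and the corresponding $q^{(k)}$ has a finite entry.

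The main obstacle, if there is one, is purely bookkeeping---honouring the $\infty$ conventions of Section~\ref{ssec:conventions} when some $q^{(k)}_{B_i}$ or some $\lambda_k$ is infinite, and ruling out the degenerate combination $\lambda_1=\lambda_2=\infty$. All the genuinely new structural content is packaged inside Lemma~\ref{lem:incidence implies pluecker}, which is what makes the proof short.
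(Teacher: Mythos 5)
Your proof takes essentially the same route as the paper's: both invoke Lemma~\ref{lem:incidence implies pluecker} to reduce membership to the tropical incidence relations alone, observe that with $p$ fixed each such relation is the vanishing locus of a tropical linear form in $q$ (hence tropically convex), and conclude by taking the intersection. The only difference is that you spell out the tropical-convexity-of-a-tropical-hyperplane step explicitly, which the paper cites as a known fact.
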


Again, dually, the same is true for $r$-dimensional spaces contained in a given
$(r+1)$-dimensional space.

\begin{proof}
Since the incidence relations are linear in the $q$ variables
when the $p$ variables are fixed, each one cuts out a tropical linear space
in the space of vectors $q$, which is tropically convex.
By Lemma~\ref{lem:incidence implies pluecker}, this intersection is
contained in the Dressian; the equations of the Dressian need not be imposed separately.
The corollary follows because intersections of tropically convex sets are tropically convex.
\end{proof}

\section{Parameter spaces}\label{sec:parameter}
In this section we discuss the parameter space of the set of all matroids over
a fixed valuation ring~$R$.
We will take our ground set in this section to be
$E=[n]=\{1,\ldots,n\}$ for some natural~$n$, to line up with usual practice
when speaking of flag varieties.  This entails no loss of generality.

\subsection{Dressians}
The Dressian itself is a complicated space: for instance, we
only know its dimension asymptotically \cite[Theorem 31]{JoswigSchroeter:2017}, and to find an exact value
seems a difficult problem.
Since matroids over~$R$ can be seen as enrichments of tropical linear spaces,
it will be at least as difficult to gain
careful control of the space of matroids over~$R$.
But if the Dressian is taken to be a known object,
then this section provides an explicit description of
the parameter space of matroids over~$R$ in Proposition~\ref{prop:slice}.

We first define some restricted subsets of matroids over~$R$.
We say that a matroid $M$ over $R$ is \emph{essential} if no nontrivial projective module is a direct summand of $M(E)$.
For integers $k$ and $\ell$, let $\Mat{R}(r,n;k,\ell)$ be the set of
essential matroids $M$ over~$R$
on the ground set $[n]$ and of generic rank~$r$
such that $M(\emptyset)$ can be presented with at most $r+k$ generators
(of which $r$ will be dedicated to the free summand, leaving $k$ others)
and $M(E)$ needs at most $\ell$ generators.
For example, when $\ell=0$, the matroids over~$R$ in question
are the spannable ones, discussed in Section~\ref{sec:spannable}.
Clearly $\bigcup_{k,\ell}\Mat{R}(r,n;k,\ell)$ is the set of all
essential matroids $M$ over~$R$ on~$E$ of generic rank~$r$.
The essentiality hypothesis is not a significant hindrance, because  (as proved in \cite[Lemma 2.5]{FM} ) any matroid
with these parameters is the direct sum of an essential one
and a zero-element matroid for a free module.

\begin{proposition}\label{prop:slice}
The set $\Mat{R}(r,n; k,\ell)$ is the intersection of the Dressian
$Dr(n+k,2n+k+\ell)$ with a linear space.
\end{proposition}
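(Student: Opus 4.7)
The plan is to realise each $M \in \Mat{R}(r,n;k,\ell)$ as a bispannable matroid $\tilde M$ over $R$ of generic rank $n+k$ on a ground set $\tilde E$ of size $2n+k+\ell$, and then invoke Corollary~\ref{cor:span cospan} together with Lemma~\ref{lem:closure of finite} to reinterpret $\tilde M$ as a tropical Pl\"ucker vector $v_{\tilde M}(B)=t_0(\tilde M(B))$ on $\binom{\tilde E}{n+k}$, i.e., a point of the Dressian $Dr(n+k,2n+k+\ell)$. The image of the assignment $M \mapsto v_{\tilde M}$ will be cut out of the Dressian by a family of linear relations expressing the structural constraints built into the construction of~$\tilde M$.

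To construct $\tilde M$, I augment $[n]$ by three blocks of new elements. First, using Corollary~\ref{cor:i>=l}, I adjoin $\ell$ generic elements $Z$ via iterated application of Proposition~\ref{prop:add-generic}, making the extension spannable. Second, since the duality formula \eqref{eq:dual} gives $M^*$ exactly $k$ generators at the top, the dual procedure adjoins $k$ elements $G$ that resolve the torsion of $M(\emptyset)$; the result is a bispannable matroid $M'$ of generic rank $r+k$ on the ground set $[n]\cup Z\cup G$, which by Corollary~\ref{cor:span cospan} is determined by its modules on size-$(r+k)$ subsets. Third, to raise the generic rank to $n+k$ and the ground set size to $2n+k+\ell$, I take $\tilde M = M' \oplus U_{n-r,n}$, the direct sum with the uniform matroid of rank $n-r$ on a disjoint $n$-element set $[n]'$; this generalises the classical $M\oplus M^*$ doubling that handles the case $k=\ell=0$. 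Since $M$ is recovered from $\tilde M$ by deleting $Z$ and $[n]'$ and contracting $G$, the assignment $M \mapsto \tilde M$ is injective.

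The image is cut out by three families of linear equations on the Pl\"ucker coordinates. The direct-sum decomposition $\tilde M = M' \oplus U_{n-r,n}$ forces $v_{\tilde M}(B_1 \cup B_2) = v_{\tilde M}(B_1 \cup B_2')$ whenever $B_1 \subseteq [n] \cup Z \cup G$ and $B_2, B_2'$ are size-$(n-r)$ subsets of $[n]'$, reflecting $v_{U_{n-r,n}} \equiv 0$ and the additivity of tropical Pl\"ucker coordinates under direct sum. The generic extension elements of $Z$ impose shift relations from $t_i(Az) = t_{i+1}(A)$ (Section~\ref{sec:D4}), which via spannability of $M'$ and Corollary~\ref{cor:sg1} become linear equalities among Pl\"ucker coordinates; the torsion-resolution elements of $G$ contribute the dual family. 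Conversely, a point of $Dr(n+k,2n+k+\ell)$ satisfying all these equations yields, after un-augmenting, data fulfilling the axioms of Proposition~\ref{prop:t} for a unique matroid in $\Mat{R}(r,n;k,\ell)$. The hard part will be enumerating the linear constraints precisely and verifying that they collectively both cut out the image and suffice to reconstruct the matroid structure; the essentiality hypothesis rules out redundant coloop or free-summand contributions that would otherwise complicate the bookkeeping.
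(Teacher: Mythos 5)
You take a genuinely different route from the paper, and it is both harder to complete and obscures why the statement is true. The paper constructs no auxiliary matroid $\tilde M$ at all: it declares the Pl\"ucker vector by the explicit formula $p_A = t_{|A\cap G|}(M(A\cap[n]))$, partitioning $[2n+k+\ell]$ into $[n]$, a set $G$ of $r+k$ ``generic'' positions and a set $Z$ of $n-r+\ell$ ``zero'' positions --- not your $n+\ell+k+n$ split involving a copy of $U_{n-r,n}$. The heart of the paper's proof is then the observation that the three-term Pl\"ucker relations among the $p_A$, run over all the composition types of $A$ with respect to $[n]\amalg G\amalg Z$, are \emph{exactly} statement (D4) together with its genericisations and zeroisations from Section~\ref{sec:D4}, which with (TS) is a complete axiom system (Proposition~\ref{prop:t}); so the intersection with the Dressian is automatically the right set, and the linear space $W$ only has to say that $p_A$ depends on $A$ through composition type, plus (TS).

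Your route, through a bispannable $\tilde M$ of rank $n+k$ and Corollary~\ref{cor:span cospan}, discards this dictionary, and the details left to fill in are not merely tedious. First, your ``dual procedure'' adjoining $G$ is undefined as stated: adding ground-set elements cannot alter $M(\emptyset)$, so it has to be a $k$-fold free \emph{coextension} (the dual of Proposition~\ref{prop:add-generic}), and you should verify that this yields $M'(\emptyset)\cong R^{r+k}$ canonically. Second, and more seriously, the constraints you propose are not linear in the Pl\"ucker coordinates. The genericity of $z\in Z$ in $M'$ is $t_i(M'(Az)) = t_{i+1}(M'(A))$, and by Corollary~\ref{cor:sg1} the right side equals $\min_b t_i(M'(Ab))$ --- a tropical minimum, not a linear form; the same goes for the dual family attached to $G$. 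What is honestly linear is only symmetry of the vector under permuting elements within each of $G$, $Z$, $[n]'$, and you give no argument that those symmetries, together with the Dressian equations, force the generic/coextension/direct-sum structure you built in, nor that a point of $Dr\cap W$ reconstructs to a matroid with the required generator bounds. You flag the reconstruction step as ``the hard part''; in the paper's framework that step is immediate because the Dressian relations literally \emph{are} the matroid-over-$R$ axioms, whereas your detour through $\tilde M$ turns it into a real problem you haven't solved.
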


The embedding $\xi:\Mat{R}(r,n; k,\ell)\to Dr(n+k,2n+k+\ell)$
is the same one exploited in the proof of Lemma~\ref{lem:closure of finite}, and is as follows.
Partition $[2n+k+\ell]$ into $E=[n]$ and two further sets
$G$ and~$Z$ of respective cardinalities $r+k$ and $n-r+\ell$.
Then the matroid $M$ over~$R$ is sent to
the tropical Pl\"ucker vector given by
\[p_A = t_{|A\cap G|}(M(A\cap [n]))\]
for any $A\subseteq[2n+k+\ell]$ of size~$n+k$.
In the language of Section~\ref{sec:D4}, $G$ and $Z$ are sets
of generic and zero elements.

The linear space $W$ in question is described by the equations
$p_A = p_B$ when $|G\cap A| = |G\cap B|$ and $|Z\cap A| = |Z\cap B|$,
and all the $p_A$ are equal when $|Z\cap A|=0$ or when $|G\cap A| = r = |G|$.
This is a linear space both classically and tropically.

\begin{proof}
By the arguments of Section~\ref{sec:D4}, the Pl\"ucker relations
for the $p_A$, for the various compositions that $A$ might have
in terms of elements of $[n]$ and $G$ and~$Z$,
are equivalent to statement (D4) and its various genericisations and zeroisations.
Together with (TS) these are an axiom system for matroids over~$R$.
So $\Mat{R}(r,n;k,\ell)$ equals the set of Pl\"ucker coordinates of points in $Dr(n+k,2n+k+\ell)$
that come from a collection of data $t_{i}(A)$ and satisfy~(TS);
these two conditions impose the two kinds of equalities defining~$W$.
Since all $p_A$ for $Z$ disjoint from~$A$ are zero,
no two distinct matroids $M$ are identified by the tropical projectivisation.
\end{proof}

\subsection{Bott-Samelson varieties}
Greater insight into the structure of the space $\Mat{R}(r,n;k,\ell)$
comes by relating it to tropical versions of
Bott-Samelson varieties.
We first recall what these are in algebraic geometry.
Let $\mathbb K$ be the fraction field of~$R$.
Let $\Fl(\mathbb K^n)$ be the full flag variety,
parametrising complete flags of linear subspaces of~$\mathbb K^n$.
Fix a complete flag $\mathcal F$ in $\mathbb K^n$,
and let $w = s_{i_1}\ldots s_{i_\ell}$ be a word in the symbols $s_1,\ldots,s_{n-1}$,
i.e.\ $i_k\in\{1,\ldots,n-1\}$ for each $k$.
The \emph{Bott-Samelson variety} of $w$ is
\begin{multline*}
Z_w = \{(\mathcal F_0,\ldots,\mathcal F_\ell) \in \Fl(\mathbb K^n)^{\ell+1} :
\mathcal F_\ell = \mathcal F, \\
\mbox{$\mathcal F_k$ and $\mathcal F_{k+1}$ agree except in the $i_k$-dimensional space}\}.
\end{multline*}

The variety $Z_w$ projects to the complete flag variety by retaining only
the flag $\mathcal F_0$.
The image of this projection is a Schubert variety~$X_v$,
where $v$ is the element of the Weyl group~$A_{n-1}$
such that $\pm T_v = T_{i_1}\cdots T_{i_\ell}$ in the
0-Hecke algebra, in the notation of Norton~\cite{Norton}.
If $w$ is a reduced word for $v$ in~$A_{n-1}$,
then $Z_w$ is a resolution of singularities of~$X_v$.
In this case the map $Z_w\to X_v$ is one-to-one on the locus
where $\mathcal F_k$ and $\mathcal F_{k+1}$ do not agree in the $i_k$-dimensional space
for any~$k$.

For a suitable choice of~$w$,
the tapestries of flags that give elements of the Bott-Samelson variety
also describe the incidences between all the various spaces $L(t_{i,r})$
contained in a matroid over~$R$.
This section is dedicated to translating this observation into
a description of the parameter space of matroids over~$R$,
which is summarised at a high level in Theorem~\ref{thm:BS}.
The theorem is (somewhat awkwardly) inexplicit in two ways,
reflecting familiar tropical phenomena:
\begin{enumerate}
\item The separation between the tropical Grassmannian and the Dressian.
Matroids over rings contain the data of many tropical linear spaces;
in Theorem~\ref{thm:BS}
we only bound their parameter space between the ``Grassmannian and Dressian
versions'' of a construction with linear spaces.
\item Positivity requirements.
These are of the same general flavour as the conditions which
prevent $Dr(2,n)$ from literally being the space of metric trees on~$n$ leaves:
the leaf edges of a tree should not have negative length.
This is the role of the cone in Theorem~\ref{thm:BS}.
\end{enumerate}

To wit, our ``Dressian version'' of $Z_w$ is the \emph{Bott-Samelson Dressian}
\begin{multline*}
ZD_w = \{(\mathcal F_0,\ldots,\mathcal F_s) \in FD(n)^{s+1} :
\mathcal F_s = \mathcal F, \\
\mbox{$\mathcal F_k$ and $\mathcal F_{k+1}$ agree except in the $i_k$-dimensional space}\},
\end{multline*}
where now the $\mathcal F_i$ are flags of tropical linear spaces.
Like $FD(n)$, the set $ZD_w$ is not usually a tropical variety.

Say that an \emph{exceptional pair} for a matroid $M$ over~$R$ is
a pair $(s,i)$ with $0<s<n$ and $i\geq0$
such that there exists $\mu\in\val R$
with $t_i(M(A)) = t_{i+1}(M(A))+\mu$ for all sets $A\subseteq[n]$ of size~$s$.
That is, exceptional pairs index the instances in which
$t_{s,i}$ and $t_{s,i+1}$ are equal up to a global additive scalar $\mu$.
Matroids with an exceptional pair are those that
belong to the exceptional locus of the map $Z_w\to X_v$.
Let the \emph{exceptionality} of~$M$ be the number of its exceptional pairs.

\begin{example}
If $(R,\mathfrak m)$ is a discrete valuation ring,
then a matroid $M$ over~$R$ of exceptionality 1 on ground set $\{1,2\}$ is given by
$M(\emptyset)=R\oplus R/\mathfrak m^2$,
$M(1)=M(2)=R/\mathfrak m^3$, and
$M(12)=R/\mathfrak m$.
The exceptional pair is $(s,i)=(1,0)$,
for which $t_{1,0}=(3,3)$ and $t_{1,1}=(0,0)$ represent the same point in $\mathbb T\mathbb P^1$.
Foreshadowing Theorem~\ref{thm:BS},
this is related to the existence of another matroid $M'$ over~$R$
identical to $M$ except that $M(1)=M(2)=R/\mathfrak m^2$,
i.e.\ differing only in adding a global scalar to $t_{1,0}$.
\end{example}

For fixed parameters $r,n,k,\ell$, abbreviate the word
$s_is_{i+1}\cdots s_j$ by $x_{ij}$, and let $w$ be the word
\begin{equation}\label{eq:w}
w = \underbrace{x_{r,n-1}\,x_{r-1,n-1}\cdots x_{*,n-1}}_{\ell+1}
\,x_{*,n-2}\cdots x_{2,*}\,
\overbrace{x_{1,*}\cdots x_{1,n-r-k+\ell+1}\,x_{1,n-r-k+\ell}}^{k+1}.
\end{equation}
The way this is written calls for clarification.  The first and second indices
of the sequence of factors $x$ should be conceived of as
forming noninteracting sequences, the first decreasing from $r$ to~1
and then taking the value 1 another $k$ times, and the second
first repeating $\ell$ times the value $n-1$ and then decreasing
from $n-1$ (once more) to~$n-r-k+\ell$.
The subwords consisting of the first $\ell+1$ and the last $k+1$ factors
may or may not overlap (the display is written as if they do not;
in this way its look may be deceiving).
The asterisked indices are therefore simply piecewise-linear functions
with concrete values.  We have only written them as~$*$
to prevent the displayed line from being too ungainly.

If $k=\ell=0$, then this word $w$ is the longest among
the shortest representatives of cosets of the parabolic subgroup
$S_{r}\times S_{n-r}\subseteq S_{n}$,
and the corresponding Schubert variety $X_w$ is the smallest one to which
the restriction of the projection $\Fl(\mathbb K^n)\to\Gr(r,\mathbb K^n)$ is surjective.
Note also that incrementing both $k$ and $\ell$ at the same time
multiplies $w$ by a Coxeter element.

Finally, define
\[D(k,\ell) = k \ell - \binom{\min(0,\ell-r)+1}2 - \binom{\min(0,k-n+r)+1}2.\]
We may as well assume that $|k-\ell|\leq n$ since the quotient by a cyclic module can add or remove at most one torsion summand.
When either $k$ or~$\ell$ is large with respect to~$n$, implying by assumption that both are,
then $D(k,\ell)$ is of the order of~$kn$.
Of course, when $k$ and~$\ell$ are at most $n-r$ and~$r$ respectively,
we get $D(k,\ell) = k \ell$.

\begin{theorem}\label{thm:BS}
Assume that the residue field $R/\val^{-1}((0,\infty])$ is infinite.

Let $w$ be the word in~\eqref{eq:w}.
There is a space $Z'_w$ with $\Trop Z_w \subseteq Z'_w \subseteq ZD_w$
such that $\Mat{R}(r,n; k,\ell)$ projects piecewise linearly
onto the intersection of $Z'_w$ with a full-dimensional cone.
The fibres are polyhedral complexes of dimension at most $D(k,\ell)+k+\ell$.
\end{theorem}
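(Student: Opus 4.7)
The plan is to produce the map $\pi$ from $\Mat{R}(r,n;k,\ell)$ to $ZD_w$ by reading off the sequence of flags provided by Lemma~\ref{lem:closure of finite}, to identify its image as $Z'_w$ intersected with a polyhedral cone encoding axioms (T0)--(T2), and finally to bound the fibre dimension by counting the degrees of freedom introduced by exceptional pairs and by global tropical scalars on each intermediate flag.

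First I would define $\pi$. For each pair of indices $(s,i)$ appearing as we read the word $w$ from right to left, we form the tropical Pl\"ucker vector $t_{s,i}(M)$; adjacent letters of~$w$ agree on all but one pair $(s,i)$, by construction of~$w$. By Lemma~\ref{lem:closure of finite}, each resulting tuple of Pl\"ucker vectors of ranks $0,1,\ldots,n$ is a full tropical flag in $FD(n)$, and the consecutive flags differ only in the $i_k$-dimensional entry dictated by the $k$-th letter of~$w$. Hence the tuple of flags lies in $ZD_w$. Since $t_{s,i}(M) = (t_i(M(A)))_{|A|=s}$ and these are literal coordinates on $\Mat{R}(r,n;k,\ell)$ via Proposition~\ref{prop:slice}, the map $\pi$ is a coordinate projection, hence linear, and passing to tropical projectivisations (one per flag) makes it piecewise linear.

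Second, I would cut out the image. Axioms (T0), (T1) and the cone condition $t_i(A)\in\val(R)\cup\{\infty\}$ are linear inequalities in the $t_i(A)$, which together carve out a full-dimensional polyhedral cone $C$ in the tropical torus of all tuples. Define $Z'_w$ to be the set of points in $ZD_w$ arising from collections of data obeying the full axiom system of Proposition~\ref{prop:t}; equivalently, $Z'_w$ is the intersection of $ZD_w$ with the equalities expressing (T2) as genericisations and zeroisations of (D4), per Section~\ref{sec:D4}. Then $Z'_w\subseteq ZD_w$ by construction, and $\Trop Z_w\subseteq Z'_w$ because a $\mathbb K$-point of $Z_w$ gives, via its Pl\"ucker coordinates, a collection of $p_A\in R$ whose valuations automatically satisfy every axiom of Proposition~\ref{prop:t} (since realisable matroids over~$R$ always do). Surjectivity of $\pi$ onto $Z'_w\cap C$ is then the tautology that any point of $Z'_w\cap C$ packages the data of Proposition~\ref{prop:t}; infiniteness of the residue field is used here to ensure the existence of ``generic'' quotients, as in Proposition~\ref{prop:add-generic}, when recovering a matroid over~$R$ from its numerical invariants.

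Third, I would bound the fibres. A fibre of $\pi$ records the data thrown away in tropical projectivisation: one global additive scalar for each of the intermediate flags in the sequence indexed by~$w$. Two flags in the sequence that coincide \emph{projectively} exhibit an exceptional pair, and each exceptional pair contributes an extra degree of freedom in how the global scalars can be allocated while still respecting (T0)--(T2); each non-exceptional step rigidifies one scalar. The word~$w$ has precisely $k+\ell$ ``scalar steps'' on top of the $D(k,\ell)$ possible exceptional pairs, since the first $\ell+1$ and the last $k+1$ factors of~$w$ introduce the $\ell$ and $k$ additional rank directions, and a careful count of positions where a genericisation can collapse two adjacent tropical Pl\"ucker vectors yields the quantity $D(k,\ell)$ defined before the theorem (with the corrections $-\binom{\min(0,\ell-r)+1}{2}$ and $-\binom{\min(0,k-n+r)+1}{2}$ accounting for the fact that collapses near the top and bottom of the flag are obstructed by trivial-rank boundary effects).

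The main obstacle will be the fibre-dimension count. The inequality $\Trop Z_w\subseteq Z'_w\subseteq ZD_w$ and the piecewise linearity are essentially bookkeeping built on Lemma~\ref{lem:closure of finite} and Proposition~\ref{prop:slice}. By contrast, turning the heuristic ``one free parameter per exceptional pair'' into a rigorous bound requires showing that, when several exceptional pairs coexist, the resulting freedoms genuinely combine additively rather than interacting through the axioms (T2) or (D3$'$); this is a combinatorial argument on the word $w$ and the exchange structure from Lemma~\ref{lem:gen exch}. A secondary subtlety is that the cone $C$ is full-dimensional precisely because the infinite residue field hypothesis lets us realise, by Proposition~\ref{prop:add-generic}, every sequence of strict inequalities in (T0)--(T2), so the cone is neither collapsed nor reduced to a face.
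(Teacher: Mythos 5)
Your definition of $Z'_w$ and the claimed inclusion $\Trop Z_w\subseteq Z'_w$ contain the main gap. You set $Z'_w$ to be the set of points of $ZD_w$ ``arising from collections of data obeying the full axiom system of Proposition~\ref{prop:t},'' which is essentially $\im\pi$; but then $\Trop Z_w\subseteq Z'_w$ is the claim that every tropical point of $Z_w$ arises from a matroid over~$R$, which the paper neither claims nor needs. Your justification---that a $\mathbb K$-point of $Z_w$ has Pl\"ucker coordinates whose valuations ``automatically satisfy every axiom''---breaks down because a $\mathbb K$-point of $Z_w$ is a tuple of flags of linear subspaces of $\mathbb K^n$, not a module over~$R$; the valuations of the Pl\"ucker coordinates are only defined modulo independent additive scalars in each factor of the ambient product of tropical projective spaces, and it is a nontrivial problem to choose those scalars consistently so that the resulting tuple $(t_{i}(A))$ satisfies the absolute conditions of Proposition~\ref{prop:t}, including (TS) and the non-projective parts of (T2). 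The paper's Proposition~\ref{prop:moduli lower bound} carries this out by an explicit iterative construction (choosing lifts $v,v'$ and constants $c,c'$ for a linear space $L\subseteq K^n\times K^2$), and---crucially---this construction only succeeds for tropical points lying in the cone $\mathcal C$. Outside $\mathcal C$ the inclusion is not obtained, which is why the paper's $Z'_w$ is simply $\im\pi\cup\Trop Z_w$, an ad hoc set explicitly admitted to be ``not the `right' choice.'' Your more intrinsic definition of $Z'_w$ loses exactly this escape hatch.

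The fibre bound is also not established. You correctly identify the two sources of freedom (global scalars on intermediate flags, and exceptional pairs), but the numerics are conflated: in the paper, $k+\ell$ is the dimension of the fibres of the map $p_1$ forgetting the pair $(M(\emptyset),M([n]))$, not a count of ``scalar steps'' along~$w$; and $D(k,\ell)$ bounds the fibres of the refined map $\widehat\pi = (\pi, M(\emptyset), M([n]))$ by Proposition~\ref{prop:pi}, which proves that $\lambda_{s,i}$ is pinned down by $\pi(M_1)$ and the later $\lambda$'s unless $(s,i)$ is an exceptional pair with $i<\ell$ and $s-r+i<k$. That inductive argument, in right-to-left lexicographic order on $(s,i)$, using (D2$'$), together with the application of Corollary~\ref{cor:i>=l} to eliminate the cases $i\geq\ell$ and $s-r+i\geq k$, is precisely the step you flag as ``the main obstacle'' and leave unresolved. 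Finally, the cone you describe---``linear inequalities'' plus ``$t_i(A)\in\val(R)\cup\{\infty\}$''---is not the paper's $\mathcal C$, which is the polyhedral cone cut out by (D1$'$), (D1$''$), (D0$''$) alone; membership in $\val(R)\cup\{\infty\}$ is not a polyhedral condition and plays no role in defining the cone of the theorem.
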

Thus in the case $k=\ell=0$ of bispannable matroids, where $M([n])=0$ and $M(\emptyset)$ is free,
$\Mat{R}(r,n; 0,0)$ is piecewise linearly isomorphic to
the intersection of $Z'_w$ with a cone.

The Bott-Samelson variety $Z_w$ is embedded by a product of Pl\"ucker embeddings of Grassmannians
in an ambient product $\prod\P^{\binom nr_i}$ of projective spaces.
We can take one projective space for each dimension $0<d<n$
and each maximal run $j=k,k+1,\ldots,\ell$ of indices such that
no $i_j = d$.
We can in theory omit those runs that include $j=0$, though it will
make our statements easier to include them.
For our word~$w$, this amounts to one projective space $\P^{\binom ns-1}$
for each valid pair of indices $(s,i)$ such that $t_{s,i}$ is defined.
This is the embedding with respect to which $ZD_w$ is notionally tropicalised.
That is, $ZD_w$ is contained in the product
$\prod_{s,i}\Trop\P^{\binom ns-1}$ of tropical projective spaces
over the above set of~$(s,i)$.
Write $p_{A,i}$ for the coordinate indexed by the set $A\subseteq[n]$
of the $(|A|,i)$-th of these tropical projective spaces,
or its affine counterpart.

Let $\mathcal C$ be the cone in the tropical affine space $\prod_{s,i}\Trop\mathbb A^{\binom ns}$
of points $(p_{A,i})$ satisfying all inequalities of the following forms:
\begin{align*}
p_{Ab,i+1} + p_{A,i} &\geq p_{Ab,i} + p_{A,i+1}, \\
p_{Ab,i} + p_{A,i+2} &\geq p_{Ab,i+1} + p_{A,i+1}, \\
p_{A,i} + p_{A,i+2} &\geq 2p_{A,i+1}.
\end{align*}
Observe that $\mathcal C$ is full-dimensional.
Let $\mathcal L$ be the classical linear space in $\prod_{s,i}\Trop\mathbb A^{\binom ns}$
defined by
\begin{align*}
p_{A,i} &= p_{B,i} \quad\mbox{if $|A|=|B|$}, \\
p_{Abc,i} + p_{A,i+1} &= p_{Ab,i+1} + p_{Ac,i}.
\end{align*}
Observe that $\mathcal L$ is parametrised by
$p_{A,i} = \lambda_i + \mu_{|A|+i}$.

\begin{lemma}\label{lem:cone}\indent
The space $\Mat{R}(r,n; k,\ell)$ is contained in $\mathcal C$.
\end{lemma}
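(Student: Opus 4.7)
The plan is to translate the three defining inequality families of $\mathcal{C}$ through the embedding $\xi\colon\Mat{R}(r,n;k,\ell)\hookrightarrow Dr(n+k,2n+k+\ell)$ exhibited after Proposition~\ref{prop:slice}. On the factor $\Trop\P^{\binom{n}{s}-1}$ indexed by the pair $(s,i)$, the image of a matroid $M$ has coordinate $p_{A,i}=t_i(M(A))$ for each $A\subseteq[n]$ with $|A|=s$. Once this substitution is made, each inequality in the definition of $\mathcal{C}$ turns into a statement about the invariants $t_i$ that is already on the list of axioms from Section~\ref{sec:D4}.

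Concretely, I would check the three families in turn. Rewriting $p_{Ab,i+1}+p_{A,i}\geq p_{Ab,i}+p_{A,i+1}$ gives $t_i(A)-t_{i+1}(A)\geq t_i(Ab)-t_{i+1}(Ab)$, which is the left half of axiom~(T1), i.e.\ statement~(D1$'$). Rewriting $p_{Ab,i}+p_{A,i+2}\geq p_{Ab,i+1}+p_{A,i+1}$ gives $t_i(Ab)-t_{i+1}(Ab)\geq t_{i+1}(A)-t_{i+2}(A)$, the right half of~(T1), i.e.\ (D1$''$). Finally, $p_{A,i}+p_{A,i+2}\geq 2p_{A,i+1}$ is literally axiom~(T0), i.e.\ (D0$''$). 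Since Proposition~\ref{prop:t} guarantees that (T0) and (T1) hold for every matroid over~$R$, the image under $\xi$ lies in $\mathcal{C}$.

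The only point requiring any attention is the treatment of infinite entries, under the $\infty$-conventions of Section~\ref{ssec:conventions}. Because $\infty$ is absorbing for addition and dominates every element of $\val(R)$, any one of the three inequalities is automatic as soon as its left-hand side is infinite; and since the sequence $(t_i(A))_{i\ge0}$ is nonincreasing, the finite cases reduce immediately to those already covered by Proposition~\ref{prop:t}. There is no real obstacle to overcome here — the argument is essentially a dictionary match between the cone's defining inequalities and the axioms (T0), (T1) of Proposition~\ref{prop:t}, and the substance of the lemma lies entirely in the identification of the coordinates $p_{A,i}$ with the invariants $t_i(M(A))$.
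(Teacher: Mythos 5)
Your proof is correct and follows essentially the same route as the paper, which simply notes that the three defining inequality families of $\mathcal{C}$ are exactly the properties (D1$'$), (D1$''$), and (D0$''$) established in Section~\ref{sec:D4} (equivalently, axioms (T0) and (T1) of Proposition~\ref{prop:t}). Your version just spells out the coordinate matching explicitly; the one small slip is attributing the identification $p_{A,i}=t_i(M(A))$ to the map $\xi$ into $Dr(n+k,2n+k+\ell)$, when it belongs to the map $\pi$ into the ambient affine space $\prod_{s,i}\Trop\mathbb{A}^{\binom{n}{s}}$ of $ZD_w$ where $\mathcal{C}$ actually lives, but this does not affect the substance of the argument.
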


\begin{proof}
The defining inequalities of $\mathcal C$ are the properties
(D1$'$), (D1$''$), and~(D0$''$) of Section~\ref{sec:D4}.
\end{proof}

We observe that points of~$ZD_w\cap\mathcal C$ will satisfy the further inequalities
\begin{align*}
p_{Abc,i} + p_{A,i} &\geq p_{Ab,i} + p_{Ac,i}, \\
p_{Abc,i} + p_{A,i+2} &\geq p_{Ab,i+1} + p_{Ac,i+1},
\end{align*}
corresponding to (D2) and~(D2$''$).
We could just as well have included these in the definition of~$\mathcal C$.

For our word $w$ from~\eqref{eq:w}, there exists a map
\[\pi:\Mat{R}(r,n;k,\ell)\to ZD_w\]
taking the distinguished flag $\mathcal F$ to be the flag all of whose
Pl\"ucker coordinates are zero.
For $M\in\Mat{R}(r,n;k,\ell)$, the $j$-th flag $F_i$ appearing in~$\pi(M)$ is
\begin{equation}\label{eq:pi i}
\cdots\subseteq L(t_{r-1,i(j,r-1)}) \subseteq L(t_{r,i(j,r)})\subseteq L(t_{r+1,i(j,r+1)})\subseteq\cdots
\end{equation}
where the $j=0$ sequence of indices is given by
\[i(0,m) = \min(r,m) - \max(-k,m-n+r-\ell)\]
and the remaining sequences so that $i(0,m) - i_(j,m)$ is the number of
letters $s_m$ to be found in the first $j$ letters of~$w$.  In particular
the sequence indexing the last flag is
\[(i(|w|,m))_{m=1}^{n-1} = (r-1,r-2,\ldots,1,0,0,\ldots,0).\]
Recall that, by the discussion before Lemma~{lem:closure of finite},
since any $M\in\Mat{R}(r,n;k,\ell)$ is of generic rank~$r$,
none of the Pl\"ucker vectors appearing in $\pi(M)$ consist entirely of infinities,
so $\pi(M)$ is well-defined.

\begin{proposition}\label{prop:pi}
Fix parameters $r,n,k,\ell$.
The nonempty fibres of the map
\[\widehat\pi : M\mapsto(\pi(M), M(\emptyset), M([n]))\]
are bounded polytopes, of dimension bounded above
by the exceptionality of the matroids therein and by $D(k,\ell)$.
\end{proposition}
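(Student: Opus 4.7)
The plan is to parametrize each nonempty fibre of $\widehat\pi$ explicitly as a polyhedron in a space of additive scalars, and then to bound its dimension both by exceptionality and by the combinatorial quantity $D(k,\ell)$.

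First I would fix a target point $(\mathcal G, M_0, M_E)$ of $\widehat\pi$ and choose any $M^\star$ in the fibre as a reference, so that the tropical projective Pl\"ucker vectors of the linear spaces $L(t_{s,i})$ appearing as levels of flags in $\mathcal G$ have concrete representatives $\tau_{s,i}$. Since $\pi(M)=\mathcal G$ fixes the projective class of each $t_{s,i}$, any other matroid $M$ in the fibre satisfies $t_i(M(A))=\tau_{s,i}(A)+c_{s,i}$ for $|A|=s$ with $0<s<n$, where $(c_{s,i})$ is a tuple of scalar unknowns ranging over the finitely many pairs $(s,i)$ occurring in $\mathcal G$; the extremal cases $s=0$ and $s=n$ are pinned by $M_0$ and $M_E$ respectively. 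Substituting this ansatz into the axioms of Proposition~\ref{prop:t}, equivalently into the (D)-hierarchy of Section~\ref{sec:D4}, yields finitely many affine-linear inequalities in $(c_{s,i})$, together with equalities coming from the strict-equality clause of (T2). Thus the fibre is identified with a polyhedron $\mathcal F\subseteq\R^N$. For boundedness I would chain applications of (T1) along a saturated chain $\emptyset\subsetneq\{b_1\}\subsetneq\cdots\subsetneq[n]$ to produce two-sided linear bounds on each $c_{s,i}$ from the endpoint data $t_i(\emptyset)$ and $t_i([n])$, with axiom (TS) ensuring finiteness.

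For the dimension bound by exceptionality, I would observe that whether a pair $(s,i)$ is exceptional can be read off $\mathcal G$ alone as the coincidence $L(\tau_{s,i})=L(\tau_{s,i+1})$ of tropical linear spaces. For any non-exceptional pair $(s,i)$ there exist a set $A$ of size $s-1$ and elements $b,c\notin A$ such that $\tau_{s,i}(Ab)-\tau_{s,i+1}(Ab)\neq\tau_{s,i}(Ac)-\tau_{s,i+1}(Ac)$. The equality clause of (T2), applied to this triple, then forces an affine relation of the form $c_{s-1,i+1}-c_{s,i+1}+c_{s+1,i}-c_{s,i}=\textrm{const}$ among the four scalars involved. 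Iterating these relations over all non-exceptional pairs reduces the remaining freedom to the scalars indexed by exceptional pairs, yielding $\dim\mathcal F\leq$ exceptionality of $M$.

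For the bound by $D(k,\ell)$, I would enumerate the possible exceptional pairs directly: an exceptional pair requires the projective class of $\tau_{s,i}$ to be a constant function on subsets of size $s$, which can happen only when the torsion data of $M(A)$ is concentrated in a single cyclic summand. Combining the $k$ non-free generators on the $M_0$ side with the $\ell$ generators on the $M_E$ side bounds the number of such $(s,i)$ by $k\ell$ in the generic regime $k\leq n-r$ and $\ell\leq r$, and the corner correction terms in $D(k,\ell)$ handle saturation when $k$ or $\ell$ exceeds these thresholds. The main obstacle is the bookkeeping in the exceptionality step: verifying that the (T2)-equalities for distinct non-exceptional pairs are sufficiently independent to cut the polyhedron down to the exceptional coordinates, and not redundant. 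This requires a careful propagation argument through the partial order on pairs $(s,i)$ and the sequence of flag changes $F_j\to F_{j+1}$ in $\mathcal G$, which is where most of the technical work will be concentrated.
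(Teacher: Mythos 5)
Your parametrisation of the fibre by additive scalars $c_{s,i}$ (the paper's $\lambda_{s,i}$), the pinning $c_{0,i}=c_{n,i}=0$ by the extra data carried by $\widehat\pi$, and the argument that non-exceptional pairs rigidly determine $c_{s,i}$ via the equality clause of (T2), all line up with the paper's proof. The key justification that a non-exceptional $(s,i)$ furnishes sets $Ab$, $Ac$ with differing gaps comes from connectedness of the exchange graph of the matroid whose bases are the sets where $t_i$ is finite; and one should spell out the right-to-left lexicographic order on pairs $(s,i)$ to make ``iterating these relations'' a well-founded induction. Your boundedness argument via chaining (T1) along a maximal chain of subsets differs from the paper's (which instead uses the finiteness of the third term in a suitable instance of (D2$'$) together with (T0)), but is a plausible alternative once worked out.

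However, the argument for the $D(k,\ell)$ bound has a genuine gap. You attempt to derive it by showing that the number of exceptional pairs is at most $D(k,\ell)$, but your characterisation of exceptional pairs is wrong: a pair $(s,i)$ is exceptional when $t_{s,i}$ and $t_{s,i+1}$ agree up to a global additive scalar (i.e.\ $L(\tau_{s,i})=L(\tau_{s,i+1})$), \emph{not} when $\tau_{s,i}$ is a constant function of $A$, and it does not force the torsion of $M(A)$ to sit in a single cyclic summand. More fundamentally, the paper does not prove $\mathrm{exc}(M)\leq D(k,\ell)$ at all (which would require its own argument and may fail); instead it shows that a pair $(s,i)$ can contribute a free parameter only if it is exceptional \emph{and} satisfies $i<\ell$ \emph{and} $s-r+i<k$, and then observes that $D(k,\ell)$ counts the pairs satisfying the latter two constraints. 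The mechanism making $\lambda_{s,i}$ determined when $i\geq\ell$ (resp.\ $s-r+i\geq k$) is Corollary~\ref{cor:i>=l} and its dual, which yield $\lambda_{s,i}=\lambda_{s-1,i+1}$ (resp.\ $\lambda_{s,i}=\lambda_{s,i-1}$); your proposal makes no use of spannability/cospannability and so misses this mechanism entirely. Without it you cannot conclude the $D(k,\ell)$ bound, since a priori there may be more exceptional pairs than $D(k,\ell)$ allows.
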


\begin{proof}
If two matroids $M_1$ and $M_2$ over~$R$ fall together under~$\widehat\pi$,
then there exists a family of constants $\lambda_{s,i}$
so that $t_{s,i}(M_2)$ is obtained from $t_{s,i}(M_1)$ by a global
scalar addition of $\lambda_{s,i}$ for all $s$ and~$i$.
Because $\widehat\pi$ remembers $M(\emptyset)$ and~$M([n])$,
we have $\lambda_{0,i}=\lambda_{n,i}=0$.
Aside from the claim that the fibres are polyhedra,
we will prove the proposition by showing that
$\lambda_{s,i}$ takes a single fixed value,
which is a function of $\pi(M_1)$ and those ``later'' $\lambda_{s',i'}$
with $(s',i')>(s,i)$ in right-to-left lexicographic order,
unless $(s,i)$ is an exceptional pair for~$M_1$
and $i<\ell$ and $s-r+i<k$.
We will further prove that even if these conditions attain, $\lambda_{s,i}$ is bounded.
The reader may check that
$D(k,\ell)$ is the number of pairs of integers $(s,i)$ satisfying $i<\ell$ and $s-r+i<k$
as well as the obvious conditions $i\geq0$ and $0<s<n$.

We first prove polyhedrality.
Given a matroid $M_1$ over~$R$, consider the fibre consisting of all $M_2$ with
$\pi(M_2) = \pi(M_1)$.
This fibre is defined by a collection of linear inequalities.
A priori it can be defined by the linear equations $\pi(M_2) = \pi(M_1)$
and the axiom system of Proposition~\ref{prop:t}.
Of the axioms (TS) is guaranteed by the parametrisation, (T0) and~(T1) are linear inequalities,
and (T2), which we have seen is equivalent to~(D2$'$),
becomes a linear inequality after imposing $\pi(M_2) = \pi(M_1)$,
since this implies $t_i(Abc)+t_{i+1}(A) = t_{i+1}(Ab)+t_i(Ac)$.

We turn to analysing the $\lambda_{s,i}$.
Let $(s,i)$ be an unexceptional pair for~$M_1$.
The sets $B$ of size $s$ where $t_i(M_1(B))$ is finite are the bases of a matroid.
Because matroids have connected exchange graphs, unexceptionality implies that
there exist two sets $Ab$ and~$Ac$ of size~$s$ differing by a single exchange such that
$t_i(M_1(Ab))-t_{i+1}(M_1(Ab))$ is unequal to $t_i(M_1(Ac))-t_{i+1}(M_1(Ac))$,
and at least one of these differences is finite.
(Note that, by axiom~(T0), $t_i(M_1(Ab))$ being finite implies $t_{i+1}(M_1(Ab))$ is finite.)
Therefore the first two terms of
\begin{multline*}
\min\big\{t_i(M_1(Ab))+t_{i+1}(M_1(Ac)), t_i(M_1(Ac))+t_{i+1}(M_1(Ab)), \\
t_i(M_1(Abc))+t_{i+1}(M_1(A))\big\}
\end{multline*}
in the instantiation of relation~(D2$'$) are unequal,
so they do not jointly achieve the minimum.
In the corresponding instantiation of relation (D2$'$) for~$M_2$,
the first two terms have each been increased by $\lambda_{s,i}$,
so they still do not jointly attain the minimum.
The third term is determined by $\pi(M_1)$ and the later values
$\lambda_{s+1,i}$ and~$\lambda_{s-1,i+1}$, both of which have subscripts greater than $(s,i)$
in right-to-left lexicographic order.
This implies that $\lambda_{s,i}$ must equal some fixed function
of $\pi(M_1)$ and the later $\lambda$ values
for the minimum to be attained twice.

Next suppose $i\geq\ell$.
Corollary~\ref{cor:i>=l} implies that
\[
t_{i+1}(M_m;A) = \min_{b\not\in A}\{t_i(M_m;Ab)\}
\]
for $m=1,2$ and any $A\subseteq[n]$ and~$i\geq\ell$.
By taking $|A|=s-1$ (which we can do outside the trivial case $s=0$),
the left sides of~\eqref{eq:sg1}
for $M=M_1$ and $M=M_2$ are seen to differ by~$\lambda_{i+1,s-1}$,
and hence the right sides must have the same difference,
proving $\lambda_{s,i}=\lambda_{s-1,i+1}$.
The dual of this argument proves that
$\lambda_{s,i}=\lambda_{s,i-1}$ if $s-r+i\geq k$.

Even in the absence of Corollary~\ref{cor:i>=l},
$\lambda_{s,i}$ can take a bounded range of values.
Let $|A|=s$ be such that $t_i(M_1(A))$ is finite.
Then $t_i(M_1(Ac))$ is finite for any $c\not\in A$,
as is $t_{i+1}(M_1(A\setminus b))$ for any $b\in A$.
Therefore in relation (D2$'$), written as
\begin{multline*}
\min\big\{t_i(M_1(A))+t_{i+1}(M_1(Ac\setminus b)), t_i(M_1(A))+t_{i+1}(M_1(Ac\setminus b)), \\
t_i(M_1(Ac))+t_{i+1}(M_1(A\setminus b))\big\},
\end{multline*}
the last term is finite.
Hence even if the first two terms are joint minima,
their counterparts in $M_2$ cannot be larger than the last term,
which bounds $\lambda_{s,i}$ above in terms of $\pi(M_1)$ and later $\lambda$ values.
Axiom~(T0) bounds $\lambda_{s,i}$ below in terms of the same.
\end{proof}

Since the residue field of~$R$ is infinite, an $n\times n$ matrix can be chosen over
the residue field with no vanishing top-aligned minors.
Lifting its entries to~$R$ arbitrarily then produced a matrix
all of whose top-aligned minors have zero valuation.
Fix such a matrix and let $\mathcal F$ be the flag whose $i$-dimensional
space is the span of its top $i$ rows.
Continue to let $w$ be the word in~\eqref{eq:w}.
The classical Bott-Samelson variety $Z_w$ for the flag $\mathcal F$
has a projective embedding
obtained by giving each of its coordinate spaces $F\ell_n$ the Pl\"ucker embedding,
under which its tropicalisation $\Trop Z_w$ lies in the ambient space of~$ZD_w$.
This is the tropicalisation meant in Theorem~\ref{thm:BS}.

\begin{proposition}\label{prop:moduli lower bound}
Fix values $(n,r,k,\ell)$ of the parameters.
That part of $\Trop Z_w$ lying in~$\mathcal C$ is a subset of~$\im\pi$.
\end{proposition}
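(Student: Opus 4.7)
The plan is to construct, from each $p \in \Trop Z_w \cap \mathcal{C}$, a matroid $M \in \Mat{R}(r,n;k,\ell)$ with $\pi(M) = p$, by reading off the data of $M$ directly from $p$ and then applying the axiomatisation of Proposition~\ref{prop:t}. For each pair $(s,i)$ corresponding to a projective factor in the Pl\"ucker embedding of $Z_w$, set $t_i(M(A)) := p_{A,i}$ for $|A|=s$. Extend to the remaining pairs by the boundary behaviour dictated by $w$: put $t_i(M(A)) = 0$ when $i$ exceeds every index appearing in a flag~\eqref{eq:pi i} (matching the standard flag $\mathcal{F} = \mathcal{F}_{|w|}$), and $t_i(M(A)) = \infty$ when $(s,i)$ falls below the range reached by $\mathcal{F}_0$. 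The word~\eqref{eq:w} is chosen so that the indexed pairs $(s,i)$ form an order ideal of $\N^2$ matching the $t_{s,i}$-footprint of a matroid in $\Mat{R}(r,n;k,\ell)$.

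Now I verify the four axioms of Proposition~\ref{prop:t}. Axiom (TS) follows from the $0$-extension just described. Axioms (T0) and (T1) are equivalent to (D0$''$) and to the conjunction of (D1$'$) and (D1$''$) respectively, which are precisely the defining inequalities of the cone $\mathcal{C}$, so they hold by hypothesis. Axiom (T2) is the tropical Pl\"ucker relation (D2$'$), and this is where $p \in \Trop Z_w$ is essential: for each instance at sets $A,b,c$ and index $i$, the six Pl\"ucker coordinates involved are distributed across a pair of consecutive flags $\mathcal{F}_j, \mathcal{F}_{j+1}$ in the Bott-Samelson sequence, together with the incidence data between them. The tropicalised Pl\"ucker and incidence relations cutting out $\Trop Z_w$ yield the genericised/zeroised variant of (D4) that is (D2$'$), by an argument along the lines of Lemma~\ref{lem:incidence implies pluecker} together with Section~\ref{sec:D4}. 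The word~\eqref{eq:w} is arranged so that each genericisation step required to pass from (D4) to (D2$'$) corresponds to exactly one letter of~$w$, which supplies the match.

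By Proposition~\ref{prop:t} the data $(t_i(A))$ define a matroid $M$ over $R$. Its membership in $\Mat{R}(r,n;k,\ell)$ follows from the extremal indices of $w$: the dimension $r+k$ appearing in $\mathcal{F}_0$ bounds the number of generators of $M(\emptyset)$ by $r+k$, the bottom dimension of the indexed pairs bounds the number of generators of $M([n])$ by $\ell$, and the generic rank is $r$ because the middle block of $w$ performs $r$ Pl\"ucker contractions. Essentiality and sufficiently general position follow from the infinite residue field hypothesis, as in the proof of Proposition~\ref{prop:add-generic}. Finally $\pi(M) = p$ holds by construction. The principal obstacle is the verification of axiom~(T2): while each tropicalised Pl\"ucker or incidence relation on a single pair of flags is immediate from $\Trop Z_w$, matching an arbitrary instance of (D2$'$) to the right pair of consecutive flags in the Bott-Samelson tower of~$w$ is a delicate piece of combinatorial bookkeeping that fundamentally exploits the tailored structure of~\eqref{eq:w}.
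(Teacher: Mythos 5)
There is a genuine gap, and it is located exactly where you flag the ``principal obstacle.'' Your assignment $t_i(M(A)) := p_{A,i}$ is not well defined: a point $p\in\Trop Z_w$ lives in the product $\prod_{s,i}\Trop\P^{\binom ns-1}$ of tropical \emph{projective} spaces, so each slice $p_{\bullet,i}$ is known only up to a global additive scalar. The relations (D4), (D3), (D3$'$) and the inequalities defining $\mathcal C$ are all invariant under shifting each block $t_{s,i}$ by its own constant $\lambda_{s,i}$, but (D2$'$) is not --- it compares blocks with different $(s,i)$, and the wrong choice of scalars breaks it. In fact the paper says this outright in the ``Further observations'' subsection: ``property (D2$'$) does not follow from the properties (D4), (D3), and (D3$'$) imposed by tropical incidence conditions (nor from the inequalities of $\mathcal C$),'' and gives an explicit counterexample immediately afterwards. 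So the step where you claim that the tropicalised Pl\"ucker and incidence relations cutting out $\Trop Z_w$ yield (D2$'$) ``by an argument along the lines of Lemma~\ref{lem:incidence implies pluecker}'' cannot be carried out: the incidence relations are block-homogeneous and (D2$'$) is not, so nothing of the required shape can follow.

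What the paper does instead is to \emph{lift}. Because tropicalisation has Zariski-dense fibres, a point of $\Trop Z_w\cap\mathcal C$ comes from an honest algebraic point $\mathcal F_*\in Z_w$ (taken outside the degenerate locus where consecutive flags coincide). The scalars are then determined iteratively, working along the Bott--Samelson tower from the fixed terminal flag $\mathcal F_s = \mathcal F$ back to $\mathcal F_0$: at each step one constructs an explicit auxiliary linear space $L\subseteq K^n\times K^2$ from vectors $v,v'$ spanning the two competing $m$-dimensional spaces, and reads off the next block of $t$-values as valuations of actual Pl\"ucker coordinates of $L$. Because $L$ is a genuine linear space over $K$, those coordinates satisfy the classical Pl\"ucker relations, and their valuations therefore satisfy the tropical ones \emph{with the scalars already matched} --- which is precisely (D2$'$). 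This use of the algebraic structure of $Z_w$, not just its tropicalisation as a set, is what you are missing; it also explains why the proposition is stated for $\Trop Z_w$ and not for the larger prevariety $ZD_w$. A purely tropical/combinatorial derivation in the style you outline would prove too much, giving $ZD_w\cap\mathcal C\subseteq\im\pi$, which the paper shows is false.

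One smaller imprecision: the infinite-residue-field hypothesis is used in the paper to choose the base flag $\mathcal F$ with all top-aligned Pl\"ucker coordinates of valuation zero (so that the iterative construction terminates at the standard flag), not to secure essentiality or general position in the sense of Proposition~\ref{prop:add-generic}.
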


\begin{proof}
%
Let $E$ be the Zariski closed subset of~$Z_w$ of points
$\mathcal F_\ast=(\mathcal F_0,\cdots,\mathcal F_s)$
for which $\mathcal F_j=\mathcal F_{j+1}$ for some~$j$.
Because fibres of the tropicalisation map
(i.e.\ coordinatewise valuation) are Zariski dense
\cite[Proposition~4.14]{Gub12}, \cite[Theorem~4.2.5]{OP10},
every point of $\Trop Z_w$ has a preimage under tropicalisation in~$Z_w\setminus E$.
So it is enough to show that, given $\mathcal F_\ast\in Z_w\setminus E$,
if the valuations $p_{r,i}$ of the Pl\"ucker coordinates of the $F_j$
lie in~$\mathcal C$,
then there is a matroid $M$ over~$R$ so that the $t_{r,i}(M)$ equal the $p_{r,i}$.

Constructing $M$ is a matter of choosing the global (additive) scalars
in the valued Pl\"ucker coordinates of the~$\mathcal F_j$
so that the results form a matroid over~$R$.
We do this iteratively, working along the list $\mathcal F_\ast$
from right to left, starting with the known final flag $\mathcal F_s = \mathcal F$.
As such we initiate the iterative construction by setting $t_i(M(A))$ to zero
when $i \geq \min(r,|A|) - \max(-k,|A|-n+r-\ell)$,
these being the values that are to agree with the
valuations of Pl\"ucker coordinates of~$\mathcal F$.

At a later stage of the construction, corresponding to the index $j$,
let us suppose that the values $t_i(M(A))$
that correspond to flags $\mathcal F_{j+1}$ through $\mathcal F_s$,
that is for $i\geq i(j+1,|A|)$ in the notation of~\eqref{eq:pi i},
have already been assigned.
To progress to $\mathcal F_j$
we only need to assign these values when $|A|=m$ is such that
the $(j+1)$th letter of~$w$ is $s_m$, and $i=i(j+1,m)=i(j,m)-1$.

Let $F^{m-1}\subseteq F^m\subseteq F^{m+1}$ be the $m-1$, $m$, and $(m+1)$-dimensional
spaces in~$\mathcal F_{j+1}$, respectively,
and $G$ the $m$-dimensional space in~$\mathcal F_j$, also containing $F^{m-1}$ and contained in $F^{m+1}$.
Let $v$ and~$v'$ be vectors in~$K^n$ so that $F^m = F^{m-1}+Kv$ and $G=F^{m-1}+Kv'$.
Since $G\neq F^m$, we have $F^{m+1} = F^{m-1}+Kv+Kv'$.

Now let $L$ be the linear subspace
\[F^{m-1}\!\times\!\{(0,0)\}+K(v,c,0)+K(v',0,c')\subseteq K^n\times K^2\]
where $c,c'\in K$ are constants to be chosen shortly.
The Pl\"ucker coordinates of $L$ are indexed by subsets of $[n]\cap\{\ast,\overline\ast\}$
of size $m+1$ in the basis $\{e_i:i\in[n]\}\cup\{e_\ast,e_{\overline\ast}\}$
where $e_\ast=(\underline 0,0,1)$ and $e_{\overline\ast}=(\underline 0,1,0)$.
The Pl\"ucker coordinates of each of $F^{m-1}$, $F^m$, $F^{m+1}$, and~$G$
appear as subcollections of those of~$L$ up to a multiplicative scalar,
namely where the index set varies among all sets
either containing $\ast$ or not, as appropriate,
and either containing $\overline\ast$ or not, as appropriate.
By choosing $c$, $c'$, and a global multiplicative scalar suitably,
one gets explicit representatives $(p_A)$ of the Pl\"ucker coordinates for~$L$
whose valuations agree with
$t_{i(j,m-1),m-1}(M)$, $t_{i(j,m),m}(M)$, and $t_{i(j,m+1),m+1}(M)$
on the $F^{m-1}$ coordinates, the $F^m$ coordinates, and the $F^{m+1}$ coordinates,
respectively.  Set $t_{i(j+1,m),m}(M)$ to consist of the remaining valuations,
those for the $G$ coordinates.
Since $L$ is a linear space, these valuations satisfy the tropical
Pl\"ucker relations, and hence $M$ satisfies property (D2$'$),
which is axiom (T2) of the axiom system in~\ref{prop:t}.

Of the remaining axioms, (T0) and~(T1), which are equivalent to
(D1$'$), (D1$''$), and (D0$''$), hold of~$M$ by the assumption
that the valuations of the Pl\"ucker coordinates of~$\mathcal F_*$ lie in $\mathcal C$.
Finally, the property (TS) holds of~$M$ by the initial stage of its construction.
So $M$ is a matroid over~$R$.
\end{proof}

\begin{proof}[Proof of Theorem~\ref{thm:BS}.]
The set $Z_w'$ can be taken to be $\im\pi\cup\Trop Z_w$:
this is not the ``right'' choice, as the makeup of this $Z_w'$
is rather different inside and outside $\mathcal C$, but it will do for the theorem statement.
The cone of the theorem is $\mathcal C$.

Proposition~\ref{prop:moduli lower bound}
establishes that $Z_w'\cap\mathcal C = \im\pi$
and gives the containment $\Trop Z_w\subseteq Z_w'$.
The containment $Z_w'\subseteq ZD_w$,
follows from the two containments $\Trop Z_w\subseteq ZD_w$ and $\im\pi\subseteq ZD_w$.
The first of these
is a consequence of the fundamental theorem of tropical geometry~\cite[\S3.2]{MS}
since the relations defining $ZD_w$ are tropicalisations of some of
the Pl\"ucker relations defining $Z_w$.
Lemma~\ref{lem:closure of finite} implies the second.

Finally, $\pi$ factors as $p_1\circ\widehat\pi$,
where $p_1$ is the map discarding the latter two components of the triples output by $\widehat\pi$.
The fibres of $\widehat\pi$ have dimension at most $D(k,\ell)$ per Proposition~\ref{prop:pi}.
Those of~$p_1$ are of dimension at most $k+\ell$,
as it takes $k$ parameters to specify $M(\emptyset)$ and $\ell$ to specify $M([n])$.
\end{proof}

\subsection{Further observations}

Neither of the inclusions bounding $Z'_w$ in Theorem~\ref{thm:BS} is strict.
The assertion $Z'_w\subsetneq ZD_w$ follows from the fact that
the map $\pi$ of Proposition~\ref{prop:pi} is not surjective.
This failure of surjectivity is due to the fact that property (D2$'$)
does not follow from the properties (D4), (D3), and~(D3$'$)
imposed by tropical incidence conditions
(nor from the inequalities of $\mathcal C$).
It would be enough to get the equality $Z'_w=ZD_w$ if (D2$'$) etc.\ were implied by
(D4), (D3), and~(D3$'$) up to altering one $t_{r,i}$ by an additive scalar,
which is what is true classically;
this is the best that could be hoped for anyway, on the account that
(D4), (D3), and~(D3$'$) are invariant under additive actions of tropical tori which alter (D2$'$).
But even this is false.

\begin{example}
Let $n=4$.  We will define the quantities $t_i(A)$ for the pairs
$(|A|,i) = (2,0)$, $(3,0)$, $(1,1)$, and $(2,1)$.
Let these all equal zero except for
$t_0(12) = 2$, $t_1(12) = 1$, $t_0(34) = 1$,
and $t_0(A)=\lambda$ for all $|A|=3$, where $\lambda$ is a parameter.
Regardless of the choice of~$\lambda$, property (D2$'$) fails.
This is because for the minimum to be attained twice among
\[t_0(123)+t_1(1)=\lambda, t_1(12)+t_0(13)=1, t_0(12)+t_1(13)=2\]
we require $\lambda=1$, while among
\[t_0(234)+t_1(4)=\lambda, t_1(34)+t_0(24)=0, t_0(34)+t_1(24)=1\]
we require $\lambda=0$.
However, properties (D4) and (D3) and (D3$'$) hold,
as can be checked: indeed, in every case, the repeated minimum is either
$0+0$ or~$0+\lambda$.
\end{example}

Next we show that every tropical linear space arises
as $t_{r,0}$ from some matroid over~$R$.
Together with the existence of nonrealisable tropical linear spaces,
this implies that $\Trop Z_w\subsetneq Z'_w$.

\begin{proposition}\label{prop:section of pi}
Let $p$ be a tropical Pl\"ucker vector of rank~$r$ on~$[n]$, the minimum of whose entries equals zero.
Then there is a matroid $M$ over~$R$ of generic rank~$r$ on~$[n]$ with $t_{r,0}(M)=p$;
in fact, there is one in $\Mat{R}(r,n;0,0)$.
\end{proposition}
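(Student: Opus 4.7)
The plan is constructive, building a bispannable matroid $M \in \Mat{R}(r,n;0,0)$ realising $p$. By Corollary~\ref{cor:span cospan}, such an $M$ is determined by its size-$r$ modules, so the task is to specify $M(A)$ for $|A|=r$ and verify that the extension to all sizes gives a valid matroid over~$R$. Since $p$ satisfies the three-term Pl\"ucker relations, its finite support is the set of bases of a matroid $M_0$ on $[n]$ with rank function~$\rho$. At size $r$, I would set $M(A) = R/I_{p_A}$ (cyclic torsion of length $p_A$) when $A$ is a basis, and $M(A) = R^{r-\rho(A)} \oplus T(A)$ for a torsion module $T(A)$ when $A$ is not a basis. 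Extend to subsets of other sizes using the spannable recursion $t_{i+1}(A) = \min_{b \notin A} t_i(Ab)$ (Corollary~\ref{cor:sg1}) for $|A| < r$, with $t_i(A)=\infty$ when $i<r-|A|$, and its cospannable dual $t_i(A) = \min_{c \in A} t_i(A\setminus c)$ for $|A| > r$.

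Next, verify the axioms of Proposition~\ref{prop:t}. Axiom (TS) and bispannability are immediate: $t_r(\emptyset) = \min_B p_B = 0$ by the normalisation $\min p = 0$, and dually $t_0([n]) = 0$. Axioms (T0) and (T1) reduce to submodularity of $\rho$ together with elementary manipulations of minima over chains. The substantive verification is axiom (T2), which by the genericisation-zeroisation framework of Section~\ref{sec:D4} is equivalent to the tropical Pl\"ucker relation (D2'); once both sides are unpacked using the recursions, (T2) should follow from the three-term Pl\"ucker relations for $p$ (holding by hypothesis) combined with matroidal properties of $\rho$.

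The hard part, and main obstacle, is the correct choice of $T(A)$ for non-basis size-$r$ subsets $A$. A naive choice $T(A)=0$ fails (T2) whenever $M_0$ has parallel elements: for instance, with $r=2$, $n=4$, $p_{12}=0$, $p_{13}=p_{14}=p_{23}=p_{24}=1$ and $p_{34}=\infty$, the correct matroid has $M(\{3,4\})\cong R \oplus R/\m$, whose torsion length~$1$ matches $\min\{t_1(\{3\}), t_1(\{4\})\}$ and encodes the valuation of the ratio of parallel vectors; the free-only recipe $M(\{3,4\}) = R$ breaks (T2) at $(A,b,c,i) = (\emptyset,3,4,1)$. So $T(A)$ must encode residual valuation data on rank-deficient subsets, most naturally by maximising its length subject to the constraint that $M(A)$ arises as a common quotient of $M(A\setminus b)$ for $b\in A$; this can be computed as a minimum of $p$-values over bases related to $A$ by single-element exchanges. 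Checking that (T2) then holds universally requires case analysis modulated by the loops, coloops, and parallelism structure of $M_0$. An alternative plan would be to extend $p$ to a full flag tropical Pl\"ucker vector in $FD(n)$ and mimic the iterative lifting construction of Proposition~\ref{prop:moduli lower bound}, carried out abstractly in the Dressian rather than in $\Trop Z_w$ so as to accommodate non-realisable~$p$.
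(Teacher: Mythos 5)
Your proposal is essentially a direct construction-and-verification plan, whereas the paper takes a slicker indirect route. The crucial difference: you correctly diagnose that the hard part is choosing the residual torsion data on rank-deficient size-$r$ subsets, and you correctly guess that the answer is ``a minimum of $p$-values over bases related to $A$ by single-element exchanges,'' but you neither write down the formula precisely nor verify axiom (T2), conceding that this ``requires case analysis modulated by the loops, coloops, and parallelism structure of $M_0$.'' That admission is where the proof actually stops. Directly verifying (T2) for the recursively defined $t_i$ is a nontrivial undertaking that the proposal does not carry out.

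The paper avoids the verification entirely. Using the stable sum $\lplus$ and stable intersection $\lcap$ operations defined just before the proposition (which by standard facts about tropical linear spaces send tropical Pl\"ucker vectors to tropical Pl\"ucker vectors), it builds the vector
\[
q = p \underbrace{\mbox{}\lcap 0^*\cdots\lcap 0^*}_r \underbrace{\mbox{}\lplus 0\cdots\lplus 0}_{n-r}
\]
on $2n$ elements. Lemma~\ref{lem:section of pi} shows that the order of the $\lplus$ and $\lcap$ steps is immaterial, so $q$ is well-defined, and it is automatically a point of the Dressian $Dr(n,2n)$. Unwinding the definitions gives the closed formula $q_A = \min\{p_B : |B|=r,\ |B\cap A|\geq r - |G\cap A|\}$, from which membership in the linear space $W$ of Proposition~\ref{prop:slice} is a short calculation, and one then concludes by invoking that proposition. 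Translated back, this yields exactly the torsion data you gesture at: $t_i(A') = \min\{p_B : |B|=r,\ |B\cap A'|\geq r-i\}$. So your intuition about the answer is correct, but the machinery you propose (checking Proposition~\ref{prop:t} axioms from scratch) is replaced in the paper by Lemma~\ref{lem:section of pi} plus the fact that stable operations preserve Pl\"ucker-ness, which is what makes the proof short. Your alternative sketch at the end (extend $p$ to a flag in $FD(n)$ and lift iteratively) is closer in spirit to what the paper does, but it too is left undeveloped, and in particular you would still need an argument that the lifting step is well-defined and stays inside the Dressian, which is precisely the role of Lemma~\ref{lem:section of pi}.
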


We leave open the obvious extension to flags:
\begin{question}
Does every flag of tropical linear spaces in $\Trop\P^{n-1}$
occur as the flag of spaces $L(t_{s,\max{r-s,0}}(M))$
for some matroid $M$ over~$R$?
\end{question}

The key constructions are \emph{stable sum} of tropical linear spaces
and its dual operation \emph{stable intersection}.
If $p$ and $p'$ are tropical Pl\"ucker vectors of respective ranks
$r$ and~$r'$ on the same set $[n]$, their stable sum is the Pl\"ucker vector
$p+p'$ of rank $r+r'$ given by
\[(p+p')_A = \min\{p_B+p'_C : B\mathbin{\dot\cup}C = A\}.\]
In the same setup, the stable intersection of $p$ and~$p'$
is the Pl\"ucker vector $p\cap p'$ of rank $r+r'-n$ given by
\[(p\cap p')_A = \min\{p_B+p'_C : ([n]\setminus B)\mathbin{\dot\cup}([n]\setminus C) = [n]\setminus A\}.\]
If $L(p)$ and $L(p')$ are tropicalisations of linear spaces over a field
in generic position, then $L(p+p')$ is the tropicalisation of their sum
and $L(p\cap p')$ of their union.
Thus we see geometrically that, if $L(p')$ is contained in $L(p'')$,
the operations $p\mapsto p+p'$ and $p\mapsto p\cap p''$ commute.

\newcommand{\lplus}{\mathop{\underline+}}
\newcommand{\lcap}{\mathop{\underline\cap}}
If $g$ is a tropical Pl\"ucker vector of rank~1 and $p$ any other tropical
Pl\"ucker vector on the same ground set $[n]$, of rank say $r$,
then let $p\lplus g$ be the tropical Pl\"ucker vector
of rank $r+1$ on $[n]\cup\{*\}$ with
\[(p\lplus g)_A = \begin{cases}
(p+g)_A & *\not\in A\\
p_{A\setminus\{*\}} & *\in A.
\end{cases}\]
Similarly, if $h$ is a tropical Pl\"ucker vector of rank~$n-1$,
let $p\lcap h$ be the tropical Pl\"ucker vector of rank $r$ on $[n]\cup\{\bar *\}$ with
\[(p\lcap h)_A = \begin{cases}
p_A & \bar *\not\in A\\
(p\cap h)_{A\setminus\{\bar *\}} & \bar *\in A.
\end{cases}\]

\begin{lemma}\label{lem:section of pi}
Let $g$ and $h$ be tropical Pl\"ucker vectors of respective ranks
1 and $n-1$ on~$[n]$ such that $L(g)\subseteq L(h)$.
Let $v\in\pm\val R\cup\{\infty\}$.
Define Pl\"ucker vectors $g'$ of rank 1 on~$[n]\cup\{\bar*\}$
by $g'_{\{\bar*\}} = v$ while $g'_{A} = g_{A}$ for other $A$,
and $h$ of rank $n$ on $[n]\cup\{*\}$
by $h'_{[n]} = v$ while $h'_{A} = h_{A\setminus\{*\}}$ for other $A$.
Then
\[(p\lplus g)\lcap h' = (p\lcap h)\lplus g'\]
for any tropical Pl\"ucker vector $p$.
\end{lemma}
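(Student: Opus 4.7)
The plan is to verify the identity coordinate by coordinate on each size-$(r+1)$ subset $A\subseteq[n]\cup\{*,\bar*\}$, via a four-way case analysis on membership of the two distinguished elements in $A$. In notation, write $A'=A\setminus\{*,\bar*\}$, so $|A'|$ is $r+1$, $r$, or $r-1$ depending on which case we're in.

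Three of the four cases are routine unfoldings. When $*,\bar*\notin A$, both sides reduce to $(p+g)_{A}$: the LHS because neither extension operation has acted, and the RHS because $(p\lcap h)_{B}=p_{B}$ and $g'_{C}=g_{C}$ for $B,C\subseteq[n]$. When $*\in A$ but $\bar*\notin A$, both sides collapse to $p_{A'}$ via the second clause of the definition of $\lplus$ in one order and the reading-through of $\lcap$ in the other. When $*,\bar*\in A$, both sides reduce by two applications of the defining case split to $(p\cap h)_{A'}$, with $A = A' \cup \{*,\bar*\}$.

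The remaining case, $\bar*\in A$ and $*\notin A$, carries the real content, and is where the hypothesis $L(g)\subseteq L(h)$ enters. Here the LHS is $((p\lplus g)\cap h')_{A'}$, a stable intersection on $[n]\cup\{*\}$, and the RHS is $((p\lcap h)+g')_{A}$, a stable sum on $[n]\cup\{\bar*\}$. The idea is to expand both minima and split them by how the distinguished index is placed. On the LHS the choice is whether $*$ goes into $B$ or into $C$; on the RHS it is whether $\bar*$ goes into $B$ or into $C$. In each expansion one of the two sub-cases contributes a single $v$-term (equal to $p_{A'}+v$), and the other contributes a family indexed by an element of $[n]$. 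After reindexing the variables, the two families match term-by-term, except for a residual collection of terms of the form $p_{A'}+g_{\{e\}}+h_{[n]\setminus\{e\}}$, which appears on the LHS as $e$ ranges over $[n]\setminus A'$ and on the RHS as $e$ ranges over $A'$.

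The point is that the minimum of $g_{\{e\}}+h_{[n]\setminus\{e\}}$ over $e\in[n]$ is the incidence expression for $L(g)\subseteq L(h)$, which by hypothesis is attained at least twice. Combined with the common $v$-term $p_{A'}+v$, this twice-attained minimum guarantees that the overall minima on the two sides coincide, provided one verifies that the witnessing pair of indices can always be split between $A'$ and $[n]\setminus A'$ after absorbing the $v$-term (when needed). The main obstacle is precisely this last bookkeeping step: cleanly organising the stable-sum/stable-intersection expansions so that the role of the incidence hypothesis is visible, and confirming that the interaction with the $v$-terms does not introduce a gap. Once this is carried out, the lemma follows.
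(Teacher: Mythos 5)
Your proposal runs parallel to the paper's own proof: the same four-way case split on membership of $*$ and $\bar*$, the same three routine cases, and a reduction of the fourth ($\bar*\in A$, $*\notin A$) to the equality of the two non-$v$ terms, namely $((p+g)\cap h)_{A'}=((p\cap h)+g)_{A'}$ where $A'=A\setminus\{\bar*\}$. The paper disposes of this by asserting, on informal geometric grounds, that stable sum by $g$ and stable intersection by $h$ commute whenever $L(g)\subseteq L(h)$; you instead try to prove that commutativity directly by expanding and term-matching.

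The ``last bookkeeping step'' you flag is not a loose end that can be tied up --- it is where the argument genuinely fails, and the lemma as stated is false. After cancelling the common terms, the residuals on the two sides are $p_{A'}+g_{\{e\}}+h_{[n]\setminus\{e\}}$ with $e$ ranging over $[n]\setminus A'$ on one side and over $A'$ on the other. The incidence hypothesis only guarantees that the minimum of $g_{\{e\}}+h_{[n]\setminus\{e\}}$ over $e\in[n]$ is attained twice; nothing forces one witness onto each side of the partition, and when both fall on the same side the two sides of the identity can differ. Concretely, take $n=3$, $A'=\{1,2\}$, $g_1=g_2=g_3=0$, $h_{12}=100$, $h_{13}=h_{23}=0$ (so the incidence minimum is attained only at $e=1,2$, both in $A'$), $p_{12}=0$, $p_{13}=p_{23}=100$, and $v=50$. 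Then $((p\lplus g)\lcap h')_{\{1,2,\bar*\}}=\min\{p_{12}+v,\ (p+g)_{123}+h_{12}\}=\min\{50,100\}=50$, whereas $((p\lcap h)\lplus g')_{\{1,2,\bar*\}}=\min\{(p\cap h)_2+g_1,\ (p\cap h)_1+g_2,\ p_{12}+v\}=\min\{0,0,50\}=0$. So the gap you noticed is unfillable as things stand; the commutativity claim, which the paper also leans on, needs a stronger hypothesis or a weaker conclusion. (The only use made of the lemma, in the proof of Proposition~\ref{prop:section of pi}, takes $g$ and $h$ with every coordinate zero; there both residual families collapse to the single term $p_{A'}$ and the equality does hold, so that application survives.)
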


\begin{proof}
On expanding the definitions of the vectors on either side,
all components are manifestly equal except those indexed by sets
that contain $\bar*$ but not $*$.  If $A$ is such a set, then
\begin{align*}
((p\lplus g)\lcap h')_A &= ((p\lplus g)\cap h')_{A\setminus\{\bar*\}}
\\&=\min\{\min\{(p\lplus g)_{A\cup\{b\}\setminus\{\bar*\}}+h'_{[n]\cup\{*\}\setminus\{b\}}
: b\in [n]\setminus A\},
\\&\qquad (p\lplus g)_{A\cup\{*\}\setminus\{\bar*\}}+h'_{[n]}\}
\\&=\min\{((p+g)\cap h)_{A\setminus\{\bar*\}}, p_{A\setminus\{\bar*\}}+v\}
\end{align*}
while by an analogous computation
\[((p\lcap h)\lplus g')_A =
\min\{((p\cap h)+g)_{A\setminus\{\bar*\}}, p_{A\setminus\{\bar*\}}+v\}.\]
The first terms of the minima are equal by the assumption $L(g)\subseteq L(h)$.
\end{proof}


\begin{proof}[Proof of Proposition~\ref{prop:section of pi}]
Write $0$ for the tropical Pl\"ucker vector of rank 1 on any set
all of whose coordinates are~0, and $0^*$ for the tropical Pl\"ucker vector
of rank $|E|-1$ on any set $E$ all of whose coordinates are~0.
Then $0' = 0$ and $(0^*)' = 0^*$ in the notation of Lemma~\ref{lem:section of pi},
so that, by the lemma,
\[q = p
\underbrace{\mbox{}\lcap 0^*\cdots\lcap 0^*}_r
\underbrace{\mbox{}\lplus 0\cdots\lplus 0}_{n-r}\]
is a well-defined tropical Pl\"ucker vector of rank $n$ on $2n$~elements.
We claim that it is the image of the requisite matroid over~$R$
under the embedding $\xi$ of Proposition~\ref{prop:slice},
where $G$ consists of all the elements $\bar*$ and $Z$ of all the elements~$*$.

Indeed, on expanding the definitions, we find
\[q_A = \min\{p_B : |B|=r, |B\cap A|\geq r-|G\cap A|\}\]
which is clearly independent of any finer information about $G\cap A$ or $Z\cap A$
than their cardinalities.  Moreover, if $|Z\cap A|=0$ or $|G\cap A| = r = |G|$, then
$q_A$ is the minimum of all the coordinates of~$p$, so it is zero by assumption.
This shows that $q$ lies in the linear space called $W$ above, so it is in the
image of~$\xi$.
\end{proof}


\appendix
\section{Proofs from Section~\ref{sec2}}\label{app:d}

We see no better way to prove Lemma~\ref{lem:d}
than repeating the proofs from \cite{FM} in the broader setting of a valuation that is not necessarily discrete.

\begin{proof}[Proof of Proposition~\ref{lem:d}]
Condition~(L0) is simply the condition that the data $d_\ell(M(A))$ are the invariants of some $R$-module.
With that handled, we must show necessity and sufficiency of the conditions (L1), (L2a), (L2b).

\noindent\emph{Necessity.} Property (L1) is immediate with our definition of the invariants~$t_i$.
There is some $x\in M(A)$ so that we may identify $M(Ab)=M(A)/\langle x\rangle$.
Then clearly, for any $i$,
\begin{align*}
       \min\length(N/\langle z_1,\ldots,z_i\rangle)
  &\geq\min\length(N/\langle z_1,\ldots,z_i,x\rangle)
\\&\geq\min\length(N/\langle z_1,\ldots,z_i,z_{i+1}\rangle),
\end{align*}
all minima being over the choices of the elements~$z_j$.

For the two-element properties,
let $M'$ be $M\otimes_R R/I_\ell$.
Choose $x,y\in M'(A)$ so as to make the identifications
$M'(Ab)=M'(A)/\langle x\rangle$,
$M'(Ac)=M'(A)/\langle y\rangle$,
$M'(Abc)=M'(A)/\langle x,y\rangle$.
Let $\overline x$ be the image of $x$ in~$M'(Ac)$,
and $\overline y$ the image of $y$ in~$M'(Ab)$.
Then all rows and columns of the following diagram are exact.
\[
\xymatrix{
  & 0\ar[d] & 0\ar[d] & 0\ar[d] \\
0\ar[r] & Rx\cap Ry\ar[d]\ar[r] & Ry\ar[d]\ar[r] & R\overline y\ar[d]\ar[r] & 0 \\
0\ar[r] & Rx\ar[d]\ar[r] & M'(A)\ar[d]\ar[r] & M'(Ab)\ar[d]\ar[r] & 0 \\
0\ar[r] & R\overline x\ar[d]\ar[r] & M'(Ac)\ar[d]\ar[r] & M'(Abc)\ar[d]\ar[r] & 0 \\
  & 0 & 0 & 0
}
\]
The relations these exact sequences induce
in the Grothendieck group of finitely presented $R$-modules show that
\begin{align}
&\mathrel{\phantom=}
d_{\leq\ell}(M(A)) - d_{\leq\ell}(M(Ab)) - d_{\leq\ell}(M(Ac)) + d_{\leq\ell}(M(Abc))\notag
\\&= [M'(A)] - [M'(Ab)] - [M'(Ac)] + [M'(Abc)]\notag
\\&= [Rx] - [R\overline x]\notag
\\&= [Rx\cap Ry] \geq 0,\label{eq:L2a nec}
\end{align}
where brackets denote class in the Grothendieck group,
given the order it is endowed with by its identification with~$\val(R)$.
This proves~(L2a).

Now, if $N$ is a finitely presented $R$-module and $\ell\in\val(R)^+$,
the analysis before Proposition~\ref{prop:t} using structure theory
of finitely presented $R$-modules
shows that $d_\ell(N)$ is the number of free summands in~$N\otimes_R R/I_\ell$.
This is also the number of generators of $J(N\otimes_R R/I_\ell)$,
where $J$ is a nonzero ideal chosen small enough to annihilate every
nonfree summand of~$N$.

If the inequality \eqref{eq:L2a nec} is strict, then $Rx\cap Ry$ is nontrivial,
that is, there is a relation $rx=sy\neq0$ in $M'(A)$.
Taking the $N$ of the previous paragraph to be $M'(A)$,
and $J$ furthermore small enough that $rx\not\in\mathfrak mJM'(A)$,
we have that $JM'(A)\cap Rx$ and $JM'(A)\cap Ry$ are equal,
namely, both are equal to $JM'(A)\cap Rrx$.
Therefore $JM'(A)/(JM'(A)\cap Rx) \cong JM'(Ab)$
is isomorphic to $JM'(A)/(JM'(A)\cap Ry) \cong JM'(Ac)$,
which entails $d_\ell(M(Ab)) = d_\ell(M(Ac))$.  This proves~(L2b).

\noindent\emph{Sufficiency.} Sufficiency requires proving that
for each $A\subseteq E$ and $b,c\not\in A$,
elements $x,y\in M(A)$ can be constructed so that the quotients have the correct isomorphism classes.
For want of a  way to lay hands on these with any insight,
we reproduce versions of the explicit formulae given in~\cite{FM} in terms of generators
usable in arbitrary valuation rings.

By~(L1), for any of the four pairs $(A',A'b')$ of sets
$(A,Ab)$, $(A,Ac)$, $(Ab,Abc)$, and $(Ac,Abc)$,
the difference $d_\ell(A',b') := d_\ell(A') - d_\ell(A'b')$ is drawn from $\{0,1\}$ for any $\ell\in\val(R)^+$.
As $\ell$ varies, the following is true of the quantities $d_\ell(A')$, and thus their differences:
\begin{itemize}
\item their value changes finitely often;
\item they are continuous from below, in the sense that $d_\ell(A')$ is equal to~$d_{\ell'}(A')$
for all $\ell'<\ell$ sufficiently close to $\ell$ (in the order topology).
\end{itemize}
Let $I(A',b')$ be the finite set of $\ell\in\val(R)^+$ such that
$d_\ell(A',b')=1$ but
$d_{\ell'}(A',b')=0$ for all $\ell'>\ell$ sufficiently close to $\ell$,
together with $\infty$ if $d_\ell(A',b')$ stabilises at~$1$ for sufficiently large $\ell$.
We are assured of the existence of various cyclic summands of $M(A)$
all participating in a single direct sum decomposition
for which we may choose generators as follows:
a generator $e_i$ with $\langle e_\ell\rangle = R/I_\ell$ for each $\ell\in I(A,b)\cup I(A,c)$,
and a second generator $\varepsilon_\ell$ distinct from~$e_\ell$
with $\langle\varepsilon_\ell\rangle = R/I_\ell$ for each $\ell\in I(Ab,c)\cap I(Ac,b)$.

Define two elements $x$ and~$y$ in $M(\emptyset)$ by
\begin{align*}
x &= \sum_{\ell\in I(A,b)} \tau(\ell-d_{\leq\ell}(A,b))\, e_\ell \\
y &= \sum_{\ell\in I(A,c)} \tau(\ell-d_{\leq\ell}(A,c))\, e_\ell \\
&\qquad + \!\!\!\!\!\!\!\!\sum_{\ell\in I(Ab,c)\setminus \big(I(A,c) \setminus I(A,b)\big)}\!\!\!\!\!\!\!\!
\tau(\ell-d_{\leq\ell}(Ab,c))\, \varepsilon_\ell.
\end{align*}
In the above formulae, the $\tau(\ell)$ are elements of~$R$
suitably chosen to replace powers of a uniformising parameter in the formulae for the DVR case.
We let $Z\subseteq\val(R)$ be the finite set of all expressions
$\ell-d_{\leq\ell}(A',b')$ for $\ell\in I(A',b')$, as well as all differences between these.
(This contains the set of all exponents that appear on a uniformising parameter in the proofs in~\cite{FM}.)
Then we choose $\tau(\ell)\in R$ for all $\ell\in Z$ such that $\val(\tau(\ell)) = \ell$,
and $\tau(\ell)\tau(m) = \tau(\ell+m)$ whenever $\ell$, $m$ and $\ell+m$ are all elements of~$Z$.
This can be arranged by choosing a basis $B$ of the $\mathbb Q$-vector space generated by~$Z$,
scaled so that every element of~$Z$ is an integral combination of elements of~$B$,
and letting the $\tau(\ell)$ be Laurent monomials in elements of~$\Frac R$ whose valuations are the elements of~$B$.

The reader can check that
$M(A)/\langle x\rangle\cong M(Ab)$,
$M(A)/\langle y\rangle\cong M(Ac)$, and
$M(A)/\langle x,y\rangle\cong M(Abc)$
by following the proofs in \cite[Propositions 5.2, 5.4]{FM}
\emph{mutatis mutandis}.
They amount to the explicit construction of new generating sets for each of these three quotients
in terms of linear combinations of the given generators.
\end{proof}

Lemma~\ref{lem:d} being established, the task that remains in the proof of Proposition~\ref{prop:t}
is to translate from the $d$s to the $t$s.
It is this that we take up next.

\begin{proof}[Proof of Proposition~\ref{prop:t}]
For concision we'll write $d_\ell(A)$ for $d_\ell(M(A))$ in this proof.
The invariants $t_i(A)$ and $d_\ell(A)$ determine one another by the rule
\begin{equation}\label{eq:d<=>t}
t_i(A) - t_{i+1}(A) \geq \ell \Longleftrightarrow d_\ell(A) > i.
\end{equation}
Given the $t_i(A)$, this rule determines the $d_\ell(A)$ completely,
while given the $d_\ell(A)$ the further information needed to fix the $t_i(A)$
is that $t_i(A)=0$ whenever $d_\ell(A)\leq i$ for all~$i$.  This implies (TS);
conversely, (TS) implies the bounded-above condition in~(L0).
Next, (T0) holds given the $d_\ell(A)$ because
\[t_i(A) - t_{i+1}(A) < \ell \leq t_{i+1}(A) - t_{i+2}(A)\]
is equivalent to
\[i\geq d_\ell(A) > i+1;\]
similarly the nonincreasing condition in~(L0) holds given the $t_i(A)$ because
\[d_\ell(A) \leq i < d_m(A)\]
is equivalent to
\[\ell > t_i(A) - t_{i+1}(A) \geq m.\]

(L1) says that if $d_\ell(Ab)=j$ then $d_\ell(A)$ is in the interval $[j,j+1]$.
Thus if $i$ is a (strict) lower bound for $d_\ell(Ab)$, it is also for $d_\ell(A)$;
while if $i+1$ is a lower bound for $d_\ell(A)$, then $i$ is for $d_\ell(Ab)$.
This is equivalent by \eqref{eq:d<=>t} to
any (weak) lower bound $\ell$ for $t_i(Ab) - t_{i+1}(Ab)$ also bounding $t_i(A) - t_{i+1}(A)$,
and any lower bound $\ell$ for $t_{i+1}(A) - t_{i+2}(A)$ also bounding $t_i(Ab) - t_{i+1}(Ab)$,
which is (T1).

It remains to show that, given the preceding axioms, (T2) is equivalent to~(L2a,b).
Set $t_{-1}(Abc)=\infty$ by convention.
Assuming (T1), for any natural~$i$, we have
\begin{multline}\label{eq:dt strict}
\max\{t_{i+1}(A)-t_{i+2}(A),t_i(Abc)-t_{i+1}(Abc)\}\leq \\
\min\{t_i(Ab)-t_{i+1}(Ab),t_i(Ac)-t_{i+1}(Ac)\}=:\ell.
\end{multline}
Let's suppose for a start that this inequality is strict.
Then \eqref{eq:d<=>t} says that $d_\ell(A)\leq i+1$ and $d_\ell(Abc)\leq i$ directly,
and together with a further application of~(T1) to the left hand side
it says $d_\ell(Ab)\leq i+1$ and $d_\ell(Ac)\leq i+1$.
Turning then to the right hand side,
\eqref{eq:d<=>t} says that all of these inequalities are strict,
that is, $d_\ell(A) = d_\ell(Ab) = d_\ell(Ac) = i+1$ and $d_\ell(Abc) = i$.

We observe that, for any set $A'$,
\[d_{\leq\ell}(A') = t_i(A') + j\ell\]
where $j$ is the minimal natural such that $t_j(A')-t_{j+1}(A')<\ell$, i.e.\ $j=d_\ell(A')$.
Assuming (L1), therefore, we have
\[d_{\leq\ell}(A') - d_{\leq\ell}(A'b') = \begin{cases}
t_j(A') - t_j(A'b') & \mbox{if } d_\ell(A') = d_\ell(A'b') = j \\
t_{j+1}(A') - t_j(A'b') + \ell & \mbox{if } d_\ell(A') = j+1, d_\ell(A'b') = j.
\end{cases}\]
Using this when $i$ and~$\ell$ are as chosen in the previous paragraph,
(L2a,b) translates exactly into~(T2).
If equality holds in~\eqref{eq:dt strict},
then we can only conclude $d_\ell(A)\geq i+1$ and $d_\ell(Abc)\geq i$,
and the translation might yield subscripts in $t_\bullet(A)$ and $t_\bullet(Abc)$ that are too large.
But we have $t_j(A')\geq t_{j+1}(A')$ for any $j$ and~$A'$,
so the translation of (L2a,b) in this event is a stronger inequality than~(T2).
In any case (L2a,b) implies~(T2).

Conversely, we must show that (T2) implies (L2a) and~(L2b).
Of these (L2b) is implied straightaway by the corresponding proviso on equality in~(T2) using \eqref{eq:d<=>t},
but (L2a) will require a longer argument.
As above, (T2) is the translation of~(L2a) into a relation on the quantities $t_i(A)$
in the case that $\ell$ is chosen according to~\eqref{eq:dt strict} for some~$i$.
So we must handle the values of $\ell$ not of this form.

If $\ell$ takes a value strictly between the left and right sides of~\eqref{eq:dt strict},
then the translation of (L2a) has the same left side and a lesser right side, so (T2) implies it.
Next suppose $\ell$ exceeds $\min\{t_i(Ab)-t_{i+1}(Ab),t_i(Ac)-t_{i+1}(Ac)\}$
but does not exceed $\min\{t_i(A)-t_{i+1}(A),t_{i-1}(Abc)-t_i(Abc)\}$.
According to how $\ell$ compares to $t_i(Ab)-t_{i+1}(Ab)$ and $t_i(Ac)-t_{i+1}(Ac)$,
there are three forms (L2a) might take:
\begin{align}\label{eq:dt sides}
t_{i+1}(A)-t_i(Ab)-t_{i+1}(Ac)+t_i(Abc) &\geq 0; \\\notag
t_{i+1}(A)-t_{i+1}(Ab)-t_i(Ac)+t_i(Abc) &\geq 0; \\\notag
t_{i+1}(A)-t_i(Ab)-t_i(Ac)+t_i(Abc) &\geq -\ell.
\end{align}
The first of these follows by subtracting $t_i(Ab)-t_{i+1}(Ab)$ from both sides of~(T2);
the minimum on the right side resolves to zero because if $i=d_\ell(Ab)$ while $i+1=d_\ell(Ac)$
then necessarily $t_i(Ab)-t_{i+1}(Ab)<t_i(Ac)-t_{i+1}(Ac)$.
The same argument with the roles of $b$ and~$c$ reversed establishes the second.
For the third case, we subtract $t_i(Ab)-t_{i+1}(Ab) + t_i(Ac)-t_{i+1}(Ac)$ from~(T2),
converting the right side to
\begin{multline*}
\min\{-t_i(Ac)+t_{i+1}(Ac),-t_i(Ab)+t_{i+1}(Ab)\} \\= -\max\{t_i(Ab)-t_{i+1}(Ab),t_i(Ac)-t_{i+1}(Ac)\};
\end{multline*}
but because $i=d_\ell(Ab)=d_\ell(Ac)$ we have $\ell>\max\{t_i(Ab)-t_{i+1}(Ab), t_i(Ac)-t_{i+1}(Ac)\}$
so that the desired right side is a weakening of the one thus obtained.

The remaining two forms which (L2a) might take, up to choice of~$i$,
correspond to values of~$\ell$ exceeding the minimum of
$t_i(A)-t_{i+1}(A)$ and $t_{i-1}(Abc)-t_i(Abc)$ but not exceeding their maximum.  These are
\begin{align}\label{eq:dt tips}
t_i(A)-t_i(Ab)-t_i(Ac)+t_i(Abc) &\geq 0;\\\notag
t_{i+1}(A)-t_i(Ab)-t_i(Ac)+t_{i-1}(Abc) &\geq 0.
\end{align}
The argument of the preceding paragraph shows that (T2) implies
one of the first two clauses of~\eqref{eq:dt sides}
(independently of concerns about which of these is the translation of~(L2a)).
Reversing the roles of $b$ and~$c$ if needed, we may assume without loss of generality that it is the first.
But (T1) implies that
\begin{align*}
t_i(A)-t_i(Ac) &\geq t_{i+1}(A)-t_{i+1}(Ac),\\
t_{i+2}(A)-t_{i+1}(Ab) &\geq t_{i+1}(A)-t_i(Ab)
\end{align*}
and these, together with the first clause of~\eqref{eq:dt sides},
imply the two inequalities of~\eqref{eq:dt tips},
the latter after substituting $i-1$ for~$i$.
We conclude that (L2a) is a consequence of the (T) conditions, and the proof is finished.
\end{proof}

\end{document}